\documentclass[11pt]{amsart}
\usepackage[margin=1in]{geometry}

\usepackage{amsmath, amsfonts, amsthm, amssymb}
\usepackage[shortlabels]{enumitem}
\usepackage{graphicx}
\usepackage{mathrsfs}
\usepackage{tikz}
\usepackage{xcolor}
\usepackage{hyperref}
\setlist{itemsep=2pt, topsep=4pt}
\definecolor{figblue}{RGB}{91,155,213}    % cornflower blue
\definecolor{editblue}{RGB}{31,78,121}    % darker revision blue
\definecolor{figorange}{RGB}{237,125,49}  % warm orange
\definecolor{figgold}{RGB}{255,192,0}     % amber gold
\definecolor{figgreen}{RGB}{112,173,71}   % sage green

\def\Vol{\mathrm{Vol}}

\def\tr{\mathrm{Tr}}
\def\Ric{\mathrm{Ric}}

\def\Len{\mathrm{Len}}

\def\diam{\mathrm{diam}}
\def\div{\mathrm{div}}

\renewcommand{\Re}{\mathrm{Re}}

\def\CC{\mathbb{C}}

\def\RR{\mathbb{R}}

\def\HH{\mathbf{H}}
\def\Heis{\mathbb{H}}

\def\II{\mathrm{I\!I}}

\def\eps{\varepsilon}

\newcommand{\mip}{\mathrm{m}}

\def\bJ{\mathbf{J}}
\def\mm{\mathsf{m}}

\def\cW{\mathcal{W}}

\def\rF{\mathrm{F}}
\def\rU{\mathrm{U}}
\def\cD{\mathcal{D}}

\def\sA{\mathscr{A}}

\def\CD{\mathrm{CD}}

\def\rv{\mathrm{v}}
\def\rx{\mathrm{x}}
\def\ry{\mathrm{y}}
\def\rz{\mathrm{z}}

\def\la{\langle}
\def\ra{\rangle}

\newtheorem{theorem}{Theorem}[section]
\newtheorem{Theorem}{Theorem}
\newtheorem{definition}[theorem]{Definition}

\newtheorem{lemma}[theorem]{Lemma}

\newtheorem{proposition}[theorem]{Proposition}
\newtheorem*{proposition*}{Proposition}
\newtheorem{corollary}[theorem]{Corollary}

\theoremstyle{remark}
\newtheorem{remark}[theorem]{Remark}
\newtheorem*{remark*}{Remark}

\makeatletter
\providecommand*{\toclevel@Theorem}{0}
\providecommand*{\toclevel@theorem}{0}
\providecommand*{\toclevel@lemma}{0}
\providecommand*{\toclevel@proposition}{0}
\providecommand*{\toclevel@corollary}{0}
\providecommand*{\toclevel@remark}{0}
\@ifundefined{toclevel@remark*}{\expandafter\def\csname toclevel@remark*\endcsname{0}}{}
\makeatother

\DeclareMathAlphabet{\mathcal}{U}{eus}{m}{n}

\title{Magnetic Brunn--Minkowski inequalities}
\author{Rotem Assouline}
\date{}

\begin{document}

\begin{abstract}
	We study Minkowski averages on Riemannian manifolds 
	in which the interpolation is by action-minimizing magnetic geodesics 
	with respect to a given magnetic potential. 
	We establish equivalence between Brunn--Minkowski inequalities
	for this operation and lower bounds on a magnetic Ricci curvature. 
	We then discuss various examples,
	including natural magnetic fields on K\"ahler and Sasakian manifolds,
	and prove a sharp, undistorted Brunn--Minkowski inequality
	for contact magnetic geodesics on the Heisenberg group.
	We also observe that closed magnetic potentials from different cohomology classes
	may give rise to different geodesic Minkowski averages.
\end{abstract}

\maketitle

\section{Introduction}

	The Brunn--Minkowski inequality asserts that for every pair $A_0,A_1 \subseteq \RR^n$ of nonempty Borel sets and every $0 \le \lambda \le 1$,
	the \emph{Minkowski $\lambda$-average} $A_\lambda = (1-\lambda)A_0 + \lambda A_1$ of $A_0$ and $A_1$ satisfies
	\begin{equation}\label{eq:brunn-minkowski}
		\Vol(A_\lambda)^{1/n} \ge (1-\lambda)\cdot\Vol(A_0)^{1/n} + \lambda\cdot\Vol(A_1)^{1/n},
	\end{equation}
	where $\Vol$ denotes Lebesgue measure.
	The Brunn--Minkowski inequality is one of the cornerstones of convex geometry~\cite{Schneider14}; for a survey of various applications and generalizations of the Brunn--Minkowski inequality, see~\cite{Gard}. 

	A generalization of the Brunn--Minkowski inequality to Riemannian manifolds was discovered by Cordero-Erausquin,
	McCann and Schmuckenschl\"ager~\cite{CMS} and Sturm~\cite{Sturm2},
	revealing an intimate connection to curvature.
	The Minkowski average $A_\lambda$ of a pair of subsets $A_0,A_1$ of a Riemannian manifold is defined to be the set of $\lambda$-midpoints of minimizing geodesics joining $A_0$ to $A_1$.
	With this definition, inequality~\eqref{eq:brunn-minkowski} holds verbatim for every pair of Borel sets of positive measure,
	provided that the metric is complete and has nonnegative Ricci curvature.
	
	Nonnegativity of the Ricci curvature is necessary for~\eqref{eq:brunn-minkowski} to hold for all pairs $A_0,A_1$.
	Without this assumption, a correction in the form of \emph{distortion coefficients} multiplying the terms on the right hand side is needed.
	These coefficients can be controlled under lower Ricci curvature bounds; in fact,
	any lower bound on the Ricci curvature can be characterized by a suitable \emph{distorted} Brunn--Minkowski inequality~\cite{Sturm2,Vil,MPR}.
	The Brunn--Minkowski inequality for geodesic interpolation has since been extended to other settings,
	including metric measure spaces~\cite{Sturm2,LV,CM}, Finsler manifolds~\cite{Oh09},
	sub-Riemannian manifolds~\cite{BKS18,BR,BMR}, and Lorentzian manifolds~\cite{BrMc,CM24}.
	In all these examples, the interpolating curves are geodesics of a metric. 

	The goal of this paper is to present a new class of Brunn--Minkowski inequalities,
	in which the interpolating curves are instead \emph{magnetic geodesics}, i.e.
	minimizers of a functional of the form $\Len - \int\eta$,
	where $\Len$ denotes length with respect to a Riemannian metric and $\eta$ is a given one-form.
	We show that magnetic Brunn--Minkowski inequalities on Riemannian manifolds characterize lower bounds on a certain \emph{magnetic Ricci curvature},
	in complete analogy to the metric case. 

	\medskip
	Let us highlight two examples of such magnetic Brunn--Minkowski inequalities.
	Both share the property that they take the form~\eqref{eq:brunn-minkowski} while their geodesic counterparts involve distortion coefficients.
	Other examples can be found in Section~\ref{sec:examples}.

	In~\cite{AK}, the author and Klartag proved a Brunn--Minkowski inequality in which the interpolating curves are unit-speed horocycles in the hyperbolic plane $\HH$; a generalization to complex hyperbolic space of arbitrary dimension was given in~\cite{Ass25}. A \emph{horocycle} in the hyperbolic plane is a curve of constant signed geodesic curvature $1$ with respect to a fixed orientation. More generally, in complex hyperbolic space $\CC\HH^d$, horocycles are solutions to the ordinary differential equation $\nabla_{\dot\gamma}\dot\gamma = \bJ\dot\gamma$, where $\bJ$ is the complex structure; for more details see Section~\ref{sec:horo}. The horocyclic Minkowski average $A_\lambda$ of two sets $A_0,A_1\subseteq\CC\HH^d$ is defined to be the set of $\lambda$-midpoints of horocycles joining $A_0$ to $A_1$.

	\begin{Theorem}[Horocyclic Brunn--Minkowski inequality~\cite{AK,Ass25}]\label{thm:horocyclic-bm}
		For every pair $A_0,A_1 \subseteq \CC\HH^d$ of Borel sets of positive measure and every $0 \le \lambda \le 1$,
		the horocyclic Minkowski average $A_\lambda$ of $A_0$ and $A_1$ satisfies
		\begin{equation}\label{eq:horocyclic-bm}
			\Vol(A_\lambda)^{1/n} \ge (1-\lambda)\cdot\Vol(A_0)^{1/n} + \lambda\cdot\Vol(A_1)^{1/n},
		\end{equation}
		where $n=2d$ is the real dimension of $\CC\HH^d$.
		If $A_0$ and $A_1$ are concentric balls then equality holds.
	\end{Theorem}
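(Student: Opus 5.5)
We will prove the horocyclic Brunn--Minkowski inequality by the optimal transport (needle/Jacobian) method of Cordero-Erausquin--McCann--Schmuckenschl\"ager, with geodesics replaced by horocycles. The horocycles $\nabla_{\dot\gamma}\dot\gamma=\bJ\dot\gamma$ of unit speed are exactly the critical points of the magnetic action $\Len-\int\eta$, where $\eta$ is a primitive of the K\"ahler form with $d\eta=\omega$. Concretely, in the upper half-space/Siegel model we may take $\eta$ to be the one-form dual to the gradient of a Busemann function, adjusted so that $d\eta=\omega$.

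\textbf{Step 1: the cost.} Consider the cost
\[
c(x,y)=\inf\Bigl\{\Len(\gamma)-\int_\gamma\eta\Bigr\},
\]
the infimum taken over curves from $x$ to $y$. We check that $c$ is finite, that minimizers exist, and that they are the unit-speed horocycles. The check uses the lower bound $\Len-\int\eta\ge$ (a Busemann-type difference), because $|\eta|\le1$ on the relevant region.

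\textbf{Step 2: the transport map.} Take the uniform probability measures $\mu_0,\mu_1$ on $A_0,A_1$ (reduce first to bounded sets). By McCann's theorem for twisted costs, there is a $c$-optimal map
\[
T(x)=\exp^{\eta}_x\bigl(-\nabla\phi(x)\bigr)
\]
for a $c$-concave potential $\phi$. Here $\exp^{\eta}$ is the magnetic exponential, so the twist condition must be verified: the initial velocity of the minimizing horocycle is determined by $\partial_x c$. Define the interpolating map $T_\lambda(x)=\gamma_x(\lambda)$ along the horocycle from $x$ to $T(x)$. Then $T_\lambda$ pushes $\mu_0$ into a measure concentrated on $A_\lambda$ (up to null sets), so by Jensen it suffices to prove, a.e.,
\[
\det DT_\lambda^{1/n}\ge(1-\lambda)+\lambda\det DT^{1/n}.
\]

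\textbf{Step 3: Jacobi fields and the main obstacle.} This concavity is the heart of the argument and where I expect the main obstacle. Write $DT_\lambda=J(\lambda)$ for the matrix of magnetic Jacobi fields along $\gamma_x$. These satisfy
\[
J''+2\bJ J'+R(J)=0
\]
(schematically, in a parallel frame with the magnetic correction). The initial data $J(0)=I$ and $J'(0)=$ the Hessian of $\phi$ plus the magnetic term are symmetric by $c$-convexity. Setting $y(\lambda)=\log\det J(\lambda)$, a Riccati computation gives
\[
y''\le-\tfrac{1}{n}(y')^2-\Ric^{\eta}(\dot\gamma,\dot\gamma),
\]
with magnetic Ricci curvature $\Ric^{\eta}$ built from the Riemannian Ricci tensor together with $|\bJ\dot\gamma|^2$-type and $\nabla\bJ$ terms.

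The key claim is that on $\CC\HH^d$ with $\bJ$ parallel and holomorphic sectional curvature $-4$ (normalized so that horocycles have curvature $1$), these contributions cancel exactly, so that $\Ric^{\eta}\ge0$ in the $n$-dimensional sense. Heuristically, the magnetic term $+|\bJ\dot\gamma|^2$-contributions offset the negative curvature. The risk is that the naive Riccati inequality loses because $\bJ$ makes the Riccati matrix non-symmetric. To handle this, I would first conjugate $J$ by the rotation $e^{-\lambda\bJ}$ (which commutes with the geometry since $\bJ$ is parallel) to symmetrize. Then I would apply the trace/Cauchy--Schwarz step to the symmetric part, discarding the antisymmetric part, which contributes with a favorable sign. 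Given $\Ric^{\eta}\ge0$, the function $\det J^{1/n}$ is concave in $\lambda$, which yields the pointwise inequality. Integrating over $A_0$ then gives the claimed bound
\[
\Vol(A_\lambda)^{1/n}\ge(1-\lambda)\Vol(A_0)^{1/n}+\lambda\Vol(A_1)^{1/n}.
\]

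\textbf{Step 4: equality.} For concentric balls, the horocycles from the center direction, together with the symmetry group fixing the center, map spheres to spheres. Computing the radius of $A_\lambda$ explicitly then shows that the volumes interpolate linearly in the $1/n$-power.
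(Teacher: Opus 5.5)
The argument breaks down at Step 2. Your cost $c(x,y)=\inf\{\Len(\gamma)-\int_\gamma\eta\}$ is one-homogeneous along its minimizers. If $\gamma$ is a minimizing unit-speed horocycle starting at $x$, then $c(x,\gamma(t))=\int_0^t(1-\eta(\dot\gamma(s)))\,ds$, and the first variation formula gives $d_xc(x,\gamma(t))=\eta_x-\dot\gamma(0)^\flat$ with $|\dot\gamma(0)|=1$, independently of $t$. So $y\mapsto d_xc(x,y)$ is constant along rays, and the twist condition fails exactly as for the Monge cost $c=d$ on a Riemannian manifold. The gradient of a potential prescribes only a unit direction, not how far to travel. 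Hence McCann's theorem gives no map of the form $T(x)=\exp^\eta_x(-\nabla\phi(x))$, and optimal plans need not be unique.

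Changing the cost does not help. The time-one action $\int_0^1(\tfrac12|\dot\gamma|^2-\eta(\dot\gamma))\,dt$ is twisted, but its minimizers are solutions of $\nabla_{\dot\gamma}\dot\gamma=\bJ\dot\gamma$ of arbitrary constant speed $s$. They therefore have geodesic curvature $1/s$ and are horocycles only when $s=1$. Likewise $\tfrac12d_F^2$ with $F(v)=|v|-\eta(v)$ interpolates at constant $F$-speed, and its $\lambda$-points are not the $\lambda$-points in $g$-arclength that define $A_\lambda$.

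This is why the paper keeps exactly your one-homogeneous cost but uses Klartag's needle decomposition (Proposition~\ref{prop:magnetic-needle}). That is $L^1$ transport, where the lack of twist is harmless. It is followed by a \emph{directed} one-dimensional Borell--Brascamp--Lieb inequality on each needle, directed because horocycles are not reversible. The theorem then follows from Theorem~\ref{thm:main}, since $\Ric_\Omega\equiv0$ on $\CC\HH^d$ and (I)--(III) hold.

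Step 3 has two further problems. First, $\lambda\mapsto\gamma_x(\lambda\ell(x))$ solves $\nabla_{\dot c}\dot c=\ell(x)\bJ\dot c$ with an $x$-dependent coefficient. So $DT_\lambda$ is not a matrix of solutions of one magnetic Jacobi equation.

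Second, and more seriously, the exact cancellation you expect does not occur in an $n$-dimensional Riccati argument. Take $J=d\Phi_t$, where $\Phi_t$ is the flow of a unit field $V$ with $d(V^\flat)=\Omega$. The matrix $U=J'J^{-1}$ (which equals $\nabla V$ along the orbit) has antisymmetric part $\tfrac12\bJ$ for all $t$. So your conjugation does symmetrize it, and nothing is discarded. But its trace satisfies $(\tr U)'=-|U_s|^2-\bigl(\Ric(v)+\tfrac n4\bigr)$. On $\CC\HH^d$ with holomorphic sectional curvature $-1$ one has $\Ric(v)+\tfrac n4=-\tfrac{d+1}2+\tfrac d2=-\tfrac12$. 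Hence $|U_s|^2\ge(\tr U)^2/n$ only yields a distorted inequality.

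The missing $\tfrac12=\tfrac12|\iota_v\Omega|^2$ sits in the block $\tfrac12\bigl(\Omega^\sharp v\otimes v^\flat+v\otimes(\Omega^\sharp v)^\flat\bigr)$ of $U_s$. This block couples the tangential and normal directions and is forced by $|V|=1$ and $\nabla_VV=\Omega^\sharp V$. It contributes to $|U_s|^2$ but not to $\tr U$. The paper keeps it by applying Cauchy--Schwarz only on $\ker\sigma$. This produces $(\delta\sigma)^2/(n-1)$ in~\eqref{eq:magnetic-bochnerineq} and the term $\tfrac12|\iota_v\Omega|^2$ in $\Ric_\Omega$. The needle direction is then handled by the one-dimensional $\CD(0,n)$ condition.

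Two smaller points. With holomorphic sectional curvature $-4$, as you propose, unit-speed solutions of $\nabla_{\dot\gamma}\dot\gamma=\bJ\dot\gamma$ have curvature $1$ inside complex lines of curvature $-4$. So they are not horocycles, and $\Ric_\Omega=-\tfrac{3(d+1)}{2}<0$. In Step 4, symmetry only shows that $A_\lambda$ is a ball. Bounding its radius needs the fact that $t\mapsto\sinh(d(\gamma(t),x)/2)$ is convex along every horocycle and equals $|t|/2$ when $\gamma(0)=x$. This must then be combined with $\Vol(B_r)=c_d\sinh^{2d}(r/2)$.
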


	The next example, which appears here for the first time,
	is a Brunn--Minkowski inequality for contact magnetic geodesics on the Heisenberg group $\Heis^m$.
	For $A_0,A_1 \subseteq \Heis^m$ and $0 \le \lambda \le 1$,
	the \emph{contact magnetic Minkowski $\lambda$-average} $A_\lambda$ of $A_0$ and $A_1$ is the set of $\lambda$-midpoints of minimizing magnetic geodesics joining $A_0$ to $A_1$, where the metric is the left-invariant Riemannian metric on $\Heis^m$ and the magnetic potential $\eta$ is the standard contact form; see Section~\ref{sec:heisenberg-example} and Section~\ref{sec:Heisenberg}.

	\begin{Theorem}[Contact magnetic Brunn--Minkowski inequality on the Heisenberg group]\label{thm:critical-heisenberg-bm}
	For every pair $A_0,A_1\subseteq\Heis^m$ of Borel sets of positive measure and every $0 \le \lambda \le 1$,
	the contact magnetic Minkowski $\lambda$-average $A_\lambda$ of $A_0$ and $A_1$ satisfies
	\begin{equation}\label{eq:critical-heisenberg-bm}
		\Vol(A_\lambda)^{1/n}
		\ge
		(1-\lambda)\cdot\Vol(A_0)^{1/n}
		+
		\lambda\cdot\Vol(A_1)^{1/n},
	\end{equation}
	where $n=2m+1$.
	\end{Theorem}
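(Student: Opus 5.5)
I will deduce the inequality from the paper's general equivalence: a lower bound on the magnetic Ricci curvature yields a Brunn--Minkowski inequality for magnetic Minkowski averages, in the spirit of Cordero-Erausquin--McCann--Schmuckenschl\"ager and Sturm. To get the undistorted form \eqref{eq:critical-heisenberg-bm}, the relevant magnetic Ricci curvature must be nonnegative in the right dimension $n=2m+1$. That is the same mechanism that produces Theorem~\ref{thm:horocyclic-bm} on $\CC\HH^d$, where the Riemannian Ricci curvature is negative but the magnetic correction exactly compensates. So the plan has two parts: set up the magnetic optimal transport and Jacobian machinery on $\Heis^m$, and compute the magnetic Ricci curvature for the left-invariant metric $g$ with potential $\eta$ equal to the contact form.

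\textbf{Step 1: transport and Jacobian.} For the cost $c(x,y)=\inf\{\Len(\gamma)-\int_\gamma\eta\}$ over curves from $x$ to $y$, I first check that $c$ is finite and bounded below on $\Heis^m$. This should follow because the cost is left-invariant and the magnetic field $d\eta$ is a multiple of the transverse symplectic form. Then:
\begin{itemize}
\item I take the uniform measures on $A_0$ and $A_1$ (reducing first to bounded sets of positive measure) and invoke the McCann--Brenier type theorem for this Tonelli-type cost. This gives a transport map $T_\lambda(x)=\gamma_x(\lambda)$ along minimizing magnetic geodesics.
\item The Monge--Amp\`ere equation then gives $\Vol(A_\lambda)\ge\int_{A_0}J_\lambda\,d\Vol/\ldots$ in the standard way.
\item Writing $J_\lambda=\det Y(\lambda)$ for a magnetic Jacobi matrix $Y$ along each geodesic, the magnetic Jacobi equation has the form $\nabla_t^2Y+\ldots=0$, with a curvature term $R(\cdot,\dot\gamma)\dot\gamma$ plus Lorentz-force terms built from $\Omega=d\eta$ viewed as an endomorphism.
\item Taking the trace of the Riccati equation for $U=\dot Y Y^{-1}$ gives
\[\frac{d}{dt}\tr U+\frac{(\tr U)^2}{n}\le-\Ric^{\eta}(\dot\gamma)\]
once symmetric and antisymmetric parts are handled.
\item If $\Ric^\eta\ge0$ with effective dimension $n$, then $J_\lambda^{1/n}$ is concave in $\lambda$. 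This yields $J_\lambda^{1/n}\ge(1-\lambda)+\lambda J_1^{1/n}$, and the inequality follows by Jensen/H\"older exactly as in the Euclidean proof.
\end{itemize}

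\textbf{Step 2: curvature computation on $\Heis^m$.} In the left-invariant orthonormal frame $X_i,Y_i,Z$ with $\eta=Z^\flat$, the Riemannian Ricci curvature is $-\tfrac12$ times a constant on the horizontal directions and $+\tfrac{m}{2}$ on $Z$ (with suitable normalization). The field $d\eta$ acts as the complex structure $\bJ$ on the horizontal distribution and kills $Z$. Contact magnetic geodesics with unit speed have $\langle\dot\gamma,Z\rangle$ conserved. I will compute the magnetic Ricci term along $\dot\gamma=\cos\theta\, h+\sin\theta\, Z$ and check that it is $\ge0$ for every $\theta$, with equality in some direction, which accounts for sharpness. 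I expect the magnetic contribution to be something like $\tfrac14|\iota_{\dot\gamma}d\eta|^2$ plus a $\nabla d\eta$ term. The $\nabla d\eta$ term should be nonzero here, since $Z$ is Killing but $d\eta$ is not parallel, and I expect it to cancel the negative horizontal Ricci curvature.

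\textbf{Main obstacle.} The hardest step is Step 2 together with the dimensional bookkeeping. The antisymmetric magnetic part of the Riccati equation can spoil the simple $(\tr U)^2/n$ estimate, so the cancellation has to be checked carefully and may hold only for the specific strength of $\eta$. If the pointwise bound fails in the $Z$-direction, I would try instead a direct Jacobian computation using left-invariance. Since every magnetic geodesic is a left translate of an explicit helix, $J_\lambda$ can be written in closed form, and I would verify the concavity of $J_\lambda^{1/n}$ directly, as was done for horocycles in \cite{AK}.
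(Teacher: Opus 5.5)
\paragraph{Genuine gap: assumption (I) fails at the contact strength.} You cannot deduce the theorem from the curvature criterion. Theorem~\ref{thm:main}, and any transport argument along minimizers, presupposes assumption~(I), and (I) fails for the contact potential on $\Heis^m$ (Lemma~\ref{lem:heisenberg-conditions}).

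By Lemma~\ref{lem:heisenberg-minimizers}, the points $0$ and $(0,0,z_1)$ with $z_1<-\pi$ are joined by no minimizing contact magnetic geodesic. The infimum $2\pi$ of $\Len-\int\eta$ is approached only by curves whose horizontal projections are circles of radius tending to infinity. Your Step~1 checks only that the cost is finite and bounded below, not that it is attained. Neither your Monge--Amp\`ere step nor your fallback explicit Jacobian computation says anything about pairs with no minimizer, and you give no reason an optimal plan between $A_0$ and $A_1$ would avoid them.

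Tuning the strength does not help: $\Ric_{\Omega_c}\ge0$ iff $|c|\ge1$, (II) fails for $|c|>1$, and (I) fails at $|c|=1$. So the only admissible strength is exactly where the global hypothesis breaks.

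Your Step~2 does succeed: $\Ric_\Omega(v)=\frac m2(1-\eta(v))^2\ge0$. However, the cancellation is not the one you predict. The $\delta\Omega$ term equals $-m\eta(v)$, which vanishes on horizontal directions. What compensates $\Ric=-\frac12$ there is $\frac12|\iota_v\Omega|^2+\frac14|\Omega|^2=\frac12+\frac m2$. With only $\frac14|\iota_v\Omega|^2$ you would get $-\frac14$.

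\paragraph{Missing idea: approximate from below.} For $c<1$ one has $|c\eta|\le c<1$, so (I)--(III) hold. Then Theorem~\ref{thm:distorted-bm} applies with $k_c=\min\Ric_{\Omega_c}=-\frac{(m+1)(1-c^2)^2}{2(m+1-c^2)}\to0$.

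Passing to the limit $c\nearrow1$ on compact $K_i\subseteq A_i$ needs three things, all obtained from the explicit minimizers:
\begin{itemize}
\item uniform convergence $\mm^c_\lambda\to\mm^1_\lambda$ on $K_0\times K_1$;
\item uniformly bounded lengths, so that $\tau^{k_c,n}_t(K_0,K_1)\to t$;
\item upper semicontinuity of volume under Hausdorff convergence.
\end{itemize}
This works only when $K_0\times K_1$ avoids $\cW$, since there minimizers escape to infinity as $c\nearrow1$ (Lemmas~\ref{lem:turning-angle-compactness} and~\ref{lem:midpoint-convergence-away-from-critical-ray}). The complementary case is when, after discarding non-density points, $A_0\times A_1$ meets $\cW$. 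It is handled separately by showing $\Vol(A_\lambda)=\infty$ (Lemma~\ref{lem:P-infinite-midpoints}).

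\paragraph{Secondary problem with Step~1.} Your transport step would not work even for $c<1$. The free-time cost $\inf(\Len-\int\eta)$ is $1$-homogeneous and distance-like. At $c=1$ it is even degenerate: it vanishes on upward vertical segments. So it has no twist property and the McCann--Brenier machinery does not apply. Squaring the cost would interpolate in the Finsler parametrization rather than Riemannian arclength, which gives the wrong midpoints. This is why the paper proves Theorem~\ref{thm:distorted-bm} by needle decomposition. You should simply invoke that theorem at $c<1$.
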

	The contact magnetic Minkowski average of two sets in $\Heis^m$ can have infinite volume,
	even if the sets are arbitrarily small; see Lemma~\ref{lem:P-infinite-midpoints}. Still,
	Theorem~\ref{thm:critical-heisenberg-bm} is sharp,
	see Lemma~\ref{lem:heisenberg-cylinder-horizontal-midpoints}.
	Existing Brunn--Minkowski inequalities on $\Heis^m$ include the multiplicative Brunn--Minkowski inequality~\cite{LM05,bobkov2011brunn,Poz19} and the sub-Riemannian geodesic Brunn--Minkowski inequality~\cite{BKS18}. Since the Ricci curvature of the left-invariant Riemannian metric on $\Heis^m$ attains negative values, there is no undistorted Brunn--Minkowski inequality for geodesic interpolation on $\Heis^m$ with respect to that metric.

	Theorem~\ref{thm:horocyclic-bm} follows directly from Theorem~\ref{thm:main} below,
	which asserts equivalence between the magnetic Brunn--Minkowski inequality and nonnegativity of the magnetic Ricci curvature,
	under appropriate global assumptions.
	Theorem~\ref{thm:critical-heisenberg-bm} is more delicate since one of the global assumptions is not satisfied (namely,
	not every pair of points can be joined by a minimizing magnetic geodesic), and a limiting procedure is required.

	\medskip
	The rest of the paper is organized as follows.
	In Section~\ref{sec:magnetic-geodesics} we define magnetic geodesics and magnetic Ricci curvature,
	and state our main result. In Section~\ref{sec:examples} we give examples.
	In Sections~\ref{sec:ric-to-bm} and~\ref{sec:bm-to-ric} we prove the magnetic Brunn--Minkowski inequality and its converse,
	respectively. In Section~\ref{sec:Heisenberg} we prove Theorem~\ref{thm:critical-heisenberg-bm}.

	\medskip
	\textbf{Acknowledgements}.
	I would like to thank Bo'az Klartag for providing me with invaluable mentorship in my doctoral years,
	during which this work was conceived.
	I am supported by the Rothschild fellowship (Yad Hanadiv) and by the Fondation Sciences Math\'ematiques de Paris (FSMP).

	\medskip
	\textbf{Use of AI tools}.
	AI-assisted tools were used during the preparation of this manuscript for
	proofreading and editorial suggestions, for creating figures, and for checking
	computations and verifying proofs. No mathematical results or proofs were
	generated by AI.

\section{Magnetic geodesics, magnetic Ricci curvature, and magnetic Brunn--Minkowski}\label{sec:magnetic-geodesics}

	\begin{figure}[htbp]
		\centering
		\includegraphics[width=.8\linewidth]{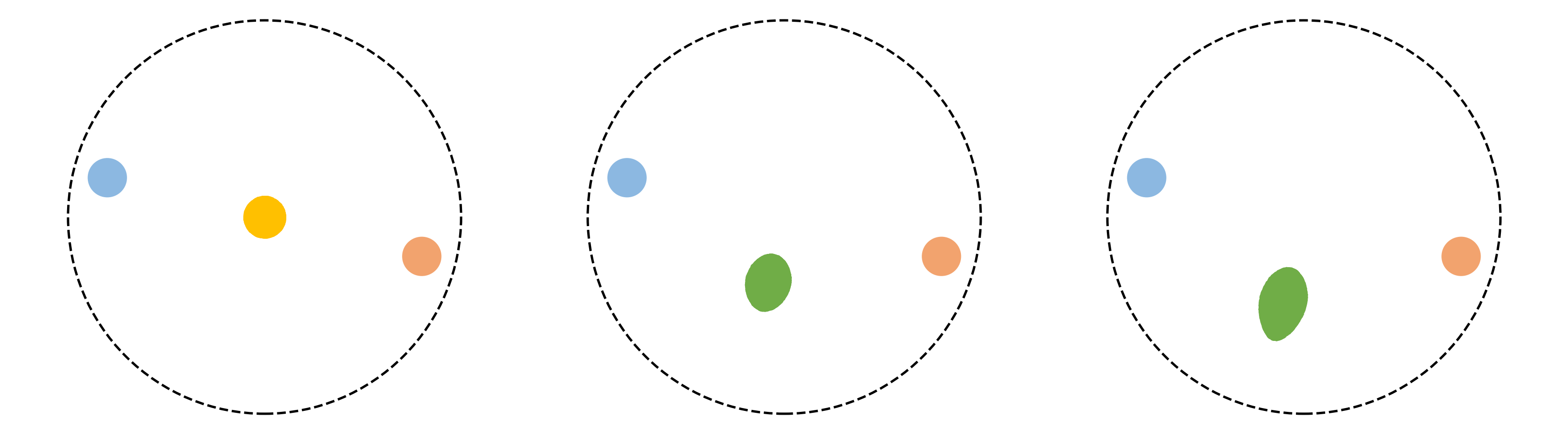}
		\caption{The magnetic Minkowski $\tfrac12$-average of two discs of radius $\tfrac{1}{10}$ in the unit disc, with the Euclidean metric and with the magnetic field $\Omega=d\eta=c\,dx\wedge dy$ for $c = 0$ (left), $c=0.85$ (middle) and $c=1$ (right).}
		\label{fig:euclidean-magnetic}
	\end{figure}
	
	\subsection{Magnetic geodesics and magnetic Minkowski averages}

	Let $(M,g)$ be a Riemannian manifold of dimension $n \ge 2$ and let $\Omega$ be a closed 2-form on $M$.
	A \emph{magnetic geodesic} is a solution $\gamma$ to the ordinary differential equation
	\begin{equation}\label{eq:lorentz-force}
		g\left(\nabla_{\dot\gamma}\dot\gamma,\,\cdot\,\right) = \Omega(\dot\gamma,\,\cdot\,),
	\end{equation}
	where $\nabla$ is the Levi-Civita connection of $g$.
	We will assume that the 2-form $\Omega$ is exact, i.e. there exists a 1-form $\eta$ such that
	\[
		\Omega = d\eta.
	\]

	We fix such $\eta$, and define a \emph{minimizing magnetic geodesic} to be a unit-speed curve $\gamma$ minimizing
	\[
		\Len[\gamma] - \int_\gamma\eta
	\]
	among all piecewise-$C^1$ curves joining its endpoints.
	We consider constant curves to be minimizing magnetic geodesics as well.
	Minimizing magnetic geodesics are indeed solutions to~\eqref{eq:lorentz-force},
	which is the Euler-Lagrange equation for the Lagrangian $L = \tfrac12g-\eta$ (viewing both $g$ and $\eta$ as functions on $TM$).
	It is important to note that the class of minimizing magnetic geodesics depends on the choice of $\eta$,
	which is not unique. Adding an exact one-form to $\eta$ does not change this class,
	but adding a one-form which is closed but not exact does,
	though the Euler-Lagrange equation~\eqref{eq:lorentz-force}, which characterizes stationary curves,
	remains the same. We will come back to this point in Section~\ref{sec:closed},
	where we consider the case $\Omega = 0$.

	In the sequel, we make the following global assumptions:
	\begin{enumerate}[(I)]
		\item For every pair of points $x,y \in M$ there exists at least one minimizing magnetic geodesic $\gamma:[0,\ell]\to M$ joining $x$ to $y$,
		i.e. $\gamma(0) = x$ and $\gamma(\ell) = y$.
		\item For every \emph{closed}, nonconstant, piecewise-$C^1$ curve $\gamma$,
			\[
				\int_\gamma \eta < \Len[\gamma].
			\]
		\item For every compact set $A\subseteq M$ there exists a compact set $\tilde A \supseteq A$ such that every minimizing magnetic geodesic with endpoints in $A$ lies entirely in $\tilde A$.
	\end{enumerate}

	\begin{remark}\label{rem:global-assumptions}
		Assumption (II), which is closely related to the Ma\~n\'e critical value of the magnetic Lagrangian,
		is clearly satisfied when there is a smooth function $f$ such that $|\eta + df|_g < 1$; in fact,
		when $M$ is compact, the two conditions are equivalent~\cite{BP}. Moreover, in the compact case,
		assumption (I) follows from assumption (II)~\cite[Theorem X]{CDI}, and assumption (III) is vacuous.
		In the noncompact case, if $g$ is complete and there exist a smooth function $f$ and $\eps > 0$ such that $|\eta + df|_g \le 1 - \eps$,
		then conditions (I) and (III) are satisfied. Indeed,
		in this case minimizing magnetic geodesics coincide with minimizing geodesics of a Finsler metric~\cite{Cont,IS},
		which is complete by the uniform boundedness assumption. However,
		this is not a necessary condition: in the case of horocycles in the hyperbolic plane,
		conditions (I)-(III) hold but no such $f$ exists.
	\end{remark}

	\begin{remark}\label{rem:local-convexity}
		Assumptions (I)-(III) imply that every point $x\in M$ has a neighborhood $U$ such that every pair of points in $U$ is joined by a unique minimizing magnetic geodesic, which is contained in $U$; see~\cite[Lemma~2.9]{Ass25}.
	\end{remark}

	\subsection{Magnetic Ricci curvature}

	Let $SM : = \left\{v \in TM \, \, \colon \, \,
	g(v,v) = 1 \right\}$ denote the unit tangent bundle.
	The \emph{magnetic Ricci curvature} is the function $\Ric_\Omega : SM \to \RR$ given by
	\[
		\Ric_\Omega(v) : = \Ric(v) - (\delta\Omega)(v) + \tfrac12|\iota_v\Omega|^2 + \tfrac14|\Omega|^2, \qquad v \in SM.
	\]

	Here $\Ric$ is the Ricci curvature of the metric $g$ with $\Ric(v):=\Ric(v,v)$,
	the codifferential $\delta$ is defined on $k$-forms by $\delta = (-1)^k \star^{-1} d \,
	\star$ where $\star$ is the Hodge star, the interior product $\iota$ is defined by $\iota_v\Omega = \Omega(v,\cdot)$ and $|\Omega|^2$ is the (unnormalized) squared Frobenius norm $|\Omega|^2 = \sum_{i,j}\Omega(e_i,e_j)^2$ where $(e_i)$ is any orthonormal frame.
	
	The terms in the definition of $\Ric_\Omega$ have different degrees of homogeneity---the first and third are quadratic,
	the second is linear and the fourth is constant on each fiber.
	But since we view all of them as real-valued functions on $SM$, adding them makes sense.

	The definition of $\Ric_\Omega$ is motivated by the magnetic Bochner formula (Proposition~\ref{prop:magnetic-bochner}).
	The first two terms are the trace of a ``magnetic Riemann curvature'' appearing in the magnetic Jacobi equation; see Lemma~\ref{lem:magnetic-jacobi}.
	The other two terms are there as a consequence of the failure of the Gauss lemma in the magnetic setting: if $V$ is a unit vector field whose integral curves are magnetic geodesics, then by~\eqref{eq:lorentz-force}, the covariant derivative $\nabla V$ has a component in the direction of $V$.

	The magnetic Ricci curvature $\Ric_\Omega$ is not, in itself, a new object.
	It appears in the literature under different guises,
	mainly in works on dynamics of magnetic flows~\cite{Gouda,Gro,Woj,Asse,AT,BA}. Furthermore,
	it is in fact a Riemannian Ricci curvature---once one adds an extra dimension and considers the Kaluza-Klein metric on the product,
	evaluated at an appropriate lift; see~\cite[Appendix A]{AM}.
	For works on horizontal Ricci curvature on sub-Riemannian and foliated manifolds,
	see~\cite{ABR17,BW14,LLZ15,GT16}. After reparametrization,
	magnetic geodesics become geodesics of the Finsler metric $F(v) = \sqrt{g(v,v)} - \eta(v)$. However,
	because of this reparametrization,
	the magnetic Ricci curvature does not coincide with the Finslerian Ricci curvature of $F$.

	\subsection{Statement of main result}
	
	Our main result is a characterization of nonnegativity of $\Ric_\Omega$ in terms of a magnetic Brunn--Minkowski inequality.
	
	Given $A_0,A_1 \subseteq M$ and $0 \le \lambda \le 1$,
	we define the \emph{magnetic Minkowski $\lambda$-average} of $A_0$ and $A_1$ to be
	\[
		A_\lambda :=
		\left\{
		\quad
		\gamma(\lambda \ell)
		\quad
		\colon
		\quad
		\begin{array}{c}
			\gamma:[0,\ell] \to M \text{ is a minimizing magnetic}\\
			\text{geodesic, } \,\,\gamma(0) \in A_0, \quad \gamma(\ell) \in A_1
		\end{array}
		\right\}.
	\]

	\begin{Theorem}[Magnetic Brunn--Minkowski inequality]\label{thm:main}
		The following are equivalent:
		\begin{enumerate}[(i)]
			\item $\Ric_\Omega \ge 0$ on $SM$.
			\item For every pair $A_0,A_1 \subseteq M$ of Borel sets of positive measure and every $0 \le \lambda\le 1$,
			\[
				\Vol(A_\lambda)^{1/n} \ge (1-\lambda)\cdot\Vol(A_0)^{1/n} + \lambda\cdot\Vol(A_1)^{1/n},
			\]
			where $\Vol$ denotes Riemannian volume and $n = \dim M$.
		\end{enumerate}
	\end{Theorem}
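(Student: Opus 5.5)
For (i)$\Rightarrow$(ii) I will follow the needle-free, Jacobian approach of Cordero-Erausquin--McCann--Schmuckenschl\"ager, but with the magnetic action in place of the squared distance.

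\textbf{The cost and the transport map.} Define the magnetic cost
\[
c(x,y)=\inf_\gamma\Big(\Len[\gamma]-\int_\gamma\eta\Big),
\]
which is nonnegative by (II), with $c(x,x)=0$. By (I), the infimum is attained by minimizing magnetic geodesics. Since $c$ is not quadratic, I will not use it directly as an optimal transport cost. Instead I will use the standard reduction: it suffices to prove the inequality for compact $A_0,A_1$. Normalize the volume measures on $A_0$ and $A_1$, and take the optimal transport plan for the cost $c$. This is a $1$-homogeneous, Finsler-type cost, so I would rather work with the squared Finsler cost $c^2$; its minimizers are exactly the reparametrized magnetic geodesics. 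By (III) and Remark~\ref{rem:local-convexity}, $c^2$ is locally semiconcave on compacts, so McCann's theorem gives a map $T=\exp^{\Omega}(\nabla\varphi\text{-type field})$. Here $T_\lambda(x)=\gamma_x(\lambda\ell_x)$, where $\gamma_x$ is the unit-speed minimizing magnetic geodesic from $x$ to $T(x)$. Thus $T_\lambda(A_0)\subseteq A_\lambda$ a.e.

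\textbf{The Jacobian estimate.} The core is to show that $J_\lambda(x)^{1/n}$ is concave in $\lambda$, where $J_\lambda=\det dT_\lambda$. Together with the Monge--Amp\`ere equation $J_0=1$, $J_1=\Vol(A_0)/\Vol(A_1)\cdot(\ldots)$, the change of variables formula and H\"older's inequality, this yields (ii) exactly as in the Riemannian proof. To prove the concavity, write $J_\lambda$ along $\gamma_x$ as the determinant of a matrix of magnetic Jacobi fields from Lemma~\ref{lem:magnetic-jacobi}. Because the speed is unit but the parameter is scaled by $\ell_x$, the relevant variation is through magnetic geodesics of fixed unit speed. The longitudinal direction therefore does not decouple as in the Gauss lemma; this is exactly where $\nabla V$ has a component along $V$. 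I would compute $\frac{d^2}{d\lambda^2}J_\lambda^{1/n}$ via a Riccati equation for $U=\nabla V$, where $V$ is the (locally defined, via the potential $\varphi$) unit field of transport directions. Its trace satisfies the magnetic Bochner identity of Proposition~\ref{prop:magnetic-bochner}:
\[
V(\tr U)+\tr(U^2)+\Ric(V)-\delta\Omega(V)=0.
\]
One then splits $U$ into its symmetric part and its antisymmetric part, the latter determined by $\tfrac12\Omega$ restricted appropriately. The terms $\tfrac12|\iota_V\Omega|^2+\tfrac14|\Omega|^2$ arise when bounding $\tr(U^2)\ge(\tr U)^2/n$ after removing the antisymmetric piece. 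So the identity becomes $V(\tr U)+(\tr U)^2/n+\Ric_\Omega(V)\le0$, which is exactly concavity of $J^{1/n}$ when $\Ric_\Omega\ge0$. This step is the main obstacle: making the matrix inequality sharp with precisely the constant $\tfrac14|\Omega|^2$, handling the non-symmetric $U$, and justifying it at points of a.e.\ second differentiability of $\varphi$ (Alexandrov-type theorem for semiconcave functions, plus cut-locus/nonsmooth-point measure-zero arguments that are analogous to the Riemannian ones).

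For (ii)$\Rightarrow$(i) I argue by contradiction. Suppose $\Ric_\Omega(v)<0$ for some $v\in S_xM$. Take $A_0,A_1$ to be small balls of radius $\eps$ around $\gamma_v(-\delta)$ and $\gamma_v(\delta)$, suitably shaped; in general, choose them as images of a small ball under a magnetic exponential map driven by a locally constructed potential whose Hessian at $x$ is chosen so that $U$ has vanishing traceless symmetric part and the Cauchy--Schwarz step is an equality. Then the Riccati computation above becomes an equality to second order. This shows that the $\tfrac12$-midpoint set has volume $(1+c\,\Ric_\Omega(v)\delta^2+o(\delta^2))$ times the right-hand side, contradicting (ii) as $\eps\ll\delta\to0$. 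Remark~\ref{rem:local-convexity} ensures all relevant geodesics are the unique local minimizers, so the local computation controls $A_\lambda$.
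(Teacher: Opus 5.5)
Your outline of (ii)$\Rightarrow$(i) is close to the paper's, but the argument for (i)$\Rightarrow$(ii) has a genuine gap, at exactly the step you call the main obstacle.

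Proposition~\ref{prop:magnetic-bochner} needs a unit covector field $\sigma$ on an open set with $d\sigma=\Omega$. Only then is the antisymmetric part of $U=\nabla V$ equal to $\tfrac12\Omega^\sharp$, which is what produces $\tfrac12|\iota_V\Omega|^2+\tfrac14|\Omega|^2$. (Your traced Riccati identity also omits the term $-\tr(\Omega^\sharp U)$ coming from $\nabla(\Omega^\sharp V)$; without it, $\tfrac14|\Omega|^2$ enters with the wrong sign.)

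A $c^2$-optimal map provides no such field, for three reasons.
\begin{enumerate}[(a)]
\item First-order optimality gives $d\varphi=\pm2c(x,Tx)\,(V^\flat-\eta)$ on $A_0$. Hence in general $d(V^\flat)=\Omega-d\log c(\cdot,T\cdot)\wedge(V^\flat-\eta)\neq\Omega$.
\item The curves of an $L^2$-type interpolation can pass through the same point at different times in different directions. So there is no autonomous unit field along them.
\item Your $T_\lambda$ stops at the fraction $\lambda$ of Riemannian length, so it is not the displacement interpolation of the $c^2$ problem, which stops at the fraction $\lambda$ of the action.
\end{enumerate}
The speed rescaling that makes the Cordero-Erausquin--McCann--Schmuckenschl\"ager argument work is unavailable, because magnetic geodesics of different speeds are different curves. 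This is the obstruction behind the paper's remark that neither metric nor Finslerian transport gives the result.

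The curl genuinely enters the Jacobian. In the plane with $\Omega=\kappa\,dx\wedge dy$, take a point where the angle $\alpha$ of $V$ satisfies $d\alpha=a\kappa V^\flat$ and $d\ell=0$. There one computes $J_\lambda=1-a+a\cos(\kappa\lambda\ell)$, whose square root is convex near $\lambda=0$ when $a<0$. The condition $d(V^\flat)=\Omega$ means exactly $a=1$. Separately, (II) does not make $c$ nonnegative: $\eta=2\,dx$ on Euclidean $\RR^2$ satisfies (I)--(III) and has $c(0,e_1)=-1$.

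The missing idea is to transport with the cost $c$ itself in $L^1$, i.e.\ needle decomposition (Proposition~\ref{prop:magnetic-needle}). The rays do not cross and are integral curves of $\sigma^\sharp$ with $\sigma=du+\eta$, $|\sigma|=1$, $d\sigma=\Omega$. So the Bochner inequality yields~\eqref{eq:magnetic-needle-cd} on each needle, with transverse exponent $1/(n-1)$. The longitudinal direction and the orientation of magnetic geodesics are then handled by the directed one-dimensional Borell--Brascamp--Lieb inequality, whose ordering hypothesis is supplied by~\eqref{eq:magnetic-needle-mass}.

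For (ii)$\Rightarrow$(i), taking a field from Lemma~\ref{lem:tight} with $\Sigma=0$ and $(\div V)(x)=0$, and small sets near a short magnetic geodesic, is the paper's strategy. However, an essential point is missing. $A_{1/2}$ consists of midpoints of \emph{all} pairs in $A_0\times A_1$, not only of pairs matched by one interpolation map. To leading order it is therefore the Minkowski average $\tfrac12(L_{\eps,0}E_0+L_{\eps,1}E_1)$ of Lemma~\ref{lem:midpointlinearization}. The Euclidean Brunn--Minkowski inequality has slack unless these two images are homothetic.

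For balls, or for a set and its image under the flow of $V$, they are not homothetic. For instance, $\widetilde L_{\eps,1}\widetilde L_{\eps,0}^{-1}=I+\tfrac{\eps}{2}\,v\otimes(\Omega^\sharp v)^\flat+O(\eps^2)$ is a shear of order $\eps$. This creates a relative volume surplus of order $\eps^2|\iota_v\Omega|^2$, the same order as the curvature term. So your test would only give $\Ric_\Omega(v)+C|\iota_v\Omega|^2\ge0$ for some $C>0$.

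The paper handles this as follows.
\begin{enumerate}[(a)]
\item It takes $E_0=L_{\eps,0}^{-1}(B_{\eps^3}(0))$ and $E_1=L_{\eps,1}^{-1}(B_{\theta\eps^3}(0))$, which makes the linearized images concentric balls.
\item It uses sets of size $\eps^3$ against a geodesic of length $\eps$, so the nonlinearity of the midpoint map costs only $O(\eps^6)$.
\item A degree argument shows that $A_{1/2}$ lies between balls of radii $\tfrac{1+\theta}{2}\eps^3\pm O(\eps^6)$.
\end{enumerate}
The determinants $\det L_{\eps,i}$ come from magnetic Jacobi fields, with no transport map involved.
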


	As in the Riemannian case~\cite{Sturm2,Vil,MPR},
	arbitrary lower bounds on $\Ric_\Omega$ can be characterized by a \emph{distorted} Brunn--Minkowski inequality.
	The full characterization will be stated in Sections~\ref{sec:ric-to-bm} and~\ref{sec:bm-to-ric}.
	For simplicity, we do not treat here the more general setting in which
	$\Vol$ is replaced by an arbitrary smooth measure on
	$M$ and $1/n$ by a different exponent,
	although our methods extend to that case. 

	The implication (i)$\implies$(ii) in Theorem~\ref{thm:main} is a special case of a more general result which was proved in~\cite{Ass25} (and which encompasses arbitrary reference measures and more general exponents). Despite the aforementioned connections to Riemannian and Finslerian geometry, this implication cannot (at least not in any obvious sense) be achieved merely by reducing to Riemannian optimal transport, nor to any other metric optimal transport. In~\cite{AK} and~\cite{Ass24}, projective Finsler metrizability of magnetic flows was used; still, the difference in parametrization means that Finslerian optimal transport by itself is not sufficient in order to establish the correct inequalities. Our proof employs Klartag's needle decomposition technique using $L^1$ optimal transport~\cite{Kl} with cost induced by the magnetic Lagrangian, and relies on a needle decomposition result established in~\cite{Ass25}, see Proposition~\ref{prop:magnetic-needle}.

	The implication (ii)$\implies$(i) in Theorem~\ref{thm:main} is new.
	A proof in the Riemannian case was given in~\cite{MPR}. Our proof, though conceptually similar,
	is shorter and makes no mention of optimal transport.

\section{Examples}\label{sec:examples}

	In this section we compute $\Ric_\Omega$ and check conditions (I)-(III) in some particular cases.

	\subsection{Surfaces}
	
		When $M$ is an oriented Riemannian surface, every two-form $\Omega$ can be written as
		\[
			\Omega = \kappa \, \Omega_g
		\]
		where $\Omega_g$ is the Riemannian area form and $\kappa : M \to \RR$ is a smooth function;
		if $\gamma$ is a magnetic geodesic, then $\kappa \circ \gamma$ is its signed geodesic curvature.
		The formula for $\Ric_\Omega$ in this case simplifies to
		\[
			\Ric_\Omega = K + \kappa^2 + \star \, d\kappa,
		\]
		where $K: M \to \RR$ is the Gauss curvature and $\star$ is the Hodge star (since $M$ is two-dimensional,
		$\star\, d\kappa$ is also a one-form). It follows that 
		\[
			\Ric_\Omega \ge 0 \qquad \iff \qquad K + \kappa^2 - |d\kappa| \ge 0.
		\]
		If $M$ is not oriented then $\kappa$ is defined only up to a sign,
		but $\Ric_\Omega$ is still well defined.
		
		If $M$ is the hyperbolic plane and $\kappa\equiv 1$,
		then magnetic geodesics are oriented horocycles.
		We discuss the case of horocycles in more detail in the next example;
		see also~\cite[Section 7]{Ass25}. For a treatment of the two-dimensional case,
		including additional examples, see~\cite[Section 5]{Ass24}.

	\subsection{K\"ahler manifolds}

		Let $(M,g,\omega)$ be a K\"ahler manifold of complex dimension $d = n/2$, let $c \in \RR$ and let
		\[
			\Omega_c := c \cdot \omega.
		\]
		Magnetic geodesics for the two-form $\Omega_c$ are solutions to
		\[
			\nabla_{\dot\gamma}\dot\gamma = c\cdot \bJ\dot\gamma,
		\]
		where $\bJ$ is the complex structure. Moreover,
		\[
			\delta\Omega_c(v) = 0, \qquad |\Omega_c|^2 = c^2 \cdot 2d \qquad \text{ and } \qquad |\iota_v\Omega_c|^2 = c^2, \qquad v \in SM,
		\]
		whence the magnetic Ricci curvature takes the simple form
		\begin{equation}\label{eq:KahlerRic}
			\Ric_{\Omega_c} = \Ric + c^2 \cdot \frac{d+1}{2}.
		\end{equation}
		Curvature-type properties of such magnetic fields were studied extensively by Adachi;
		see~\cite{Ada97,Ada06,Ada12,BA} and references therein. 
		In particular, it was shown in~\cite{Ada12} that condition (I) holds for K\"ahler magnetic fields when the metric is complete and its sectional curvature is bounded above by $-c^2$. 

		\subsubsection{Circular arcs in complex Euclidean space}

			Let $M = U \subseteq \CC^d$ be a convex domain in complex Euclidean space with a smooth boundary,
			and assume $c > 0$. In this case $\Ric_{\Omega_c} \equiv c^2\cdot \tfrac{d+1}{2}$,
			and magnetic geodesics are precisely oriented circular arcs of radius $1/c$ contained in affine complex lines. 
			
			Assume that for every affine complex line $L \subseteq \CC^d$ transverse to $\partial U$,
			the curvature of the curve $U \cap L$ is bounded below by $c$.
			In terms of the second fundamental form, this is equivalent to
			\[
				\II(v) \ge c \cdot |v|\cdot|\left<v,i\nu\right>| \qquad \text{for every $v \in T\partial U$,}
			\]
			where $\II$ is the second fundamental form of $\partial U$.
			Then conditions \textup{(I)} and \textup{(III)} hold; indeed,
			for every $x,y \in U$ we may take $L$ to be the affine complex line through $x$ and $y$,
			and then by our assumption the section $U \cap L$ must contain in its interior both circular arcs of radius $1/c$ joining $x$ to $y$.
			Furthermore, if $0<t < 1$ is chosen so that $x,y \in t\cdot U$ then the entire magnetic geodesic joining $x$ to $y$ is contained in $t \cdot U$,
			which proves \textup{(III)}.

			Condition \textup{(II)} may be ensured by additional hypotheses. For example,
			if $U$ is contained in a ball of radius $<2/c$ centered at the origin,
			then the primitive $\eta_c = (ic/4)\cdot \sum_j \left( z^j\, d\bar z^j - \bar z^j\,
			dz^j \right)$ satisfies $|\eta_c|^2 = c^2|z|^2/4 < 1$ on $U$. Alternatively, writing $z=x+iy$,
			if $U$ is contained in a slab of the form $\{|y|<1/c\}$,
			then the primitive $\tilde\eta_c:=-c \cdot \sum_j y_j\,
			dx_j$ satisfies $|\tilde\eta_c|^2 < 1$ on $U$.
			Note that these are only sufficient conditions for \textup{(II)}. 
			
			In particular, if $U$ is a complex ellipsoid all of whose (complex) semi-axes are $\le 1/c$,
			or if all the principal curvatures of $\partial U$ are $\ge c$, then conditions (I)-(III) hold.
			
		\subsubsection{Horocycles in complex hyperbolic space}\label{sec:horo}

			Let $M = \CC \HH^d$ denote complex hyperbolic space,
			normalized to have holomorphic sectional curvature $\equiv -1$, and set $c = 1$.
			Magnetic geodesics in this case are horocycles contained in totally geodesic copies of the hyperbolic plane.
			The Ricci curvature of $\CC\HH^d$ is $\Ric = -\tfrac{d+1}{2}\cdot g$,
			so formula~\eqref{eq:KahlerRic} implies that $\Ric_{\Omega_1} \equiv 0$.
			Assumptions (I)-(III) in this case were verified in~\cite[Section 7.1]{Ass25}.
			Thus Theorem~\ref{thm:horocyclic-bm} follows from Theorem~\ref{thm:main}. 

			The fact that the magnetic Ricci curvature vanishes identically suggests that there should be equality cases in the horocyclic Brunn--Minkowski inequality. By computing the horocyclic Minkowski average of two concentric balls, we can see that this is indeed the case:
			\begin{lemma}
				If $A_0,A_1 \subseteq \CC\HH^d$ are concentric balls then equality holds in~\eqref{eq:horocyclic-bm}.
			\end{lemma}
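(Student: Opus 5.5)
Concentric balls $A_0=B(p,r_0)$ and $A_1=B(p,r_1)$ in $\CC\HH^d$ should satisfy
\[
A_\lambda = B\bigl(p,(1-\lambda)r_0+\lambda r_1\bigr)
\]
for the horocyclic average, with $B(p,r)$ the metric ball. If this holds, equality in~\eqref{eq:horocyclic-bm} would follow only if $\Vol(B(p,r))^{1/n}$ were linear in $r$, which it is not in $\CC\HH^d$. So the midpoint set cannot be the metric ball of the averaged radius.

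I will instead identify $A_\lambda$ through the structure actually used for horocycles. The natural guess is that the relevant "balls" are horoballs. The lemma says concentric balls, however, so I will work directly with the geometry of horocycles through $p$.

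\textbf{Step 1: reduce to $p$ fixed by the isotropy group.} The isotropy group $\mathrm{U}(d)$ at $p$ acts transitively on the unit sphere, and it preserves the magnetic flow because it preserves both $g$ and $\omega$. Hence $A_\lambda$ is a union of $\mathrm{U}(d)$-orbits, which are geodesic spheres about $p$. So $A_\lambda$ is a ball or an annulus about $p$.

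\textbf{Step 2: upper bound on the volume.} The inequality $\Vol(A_\lambda)^{1/n}\ge\ldots$ is already given by Theorem~\ref{thm:main}, so only $\le$ is needed. I plan to bound $A_\lambda$ by exploiting that horocycles are the limits $R\to\infty$ of geodesic circles of radius $R$ inside totally geodesic $\HH$'s. Every horocycle from $A_0$ to $A_1$ lies in a totally geodesic complex line. Thus $A_\lambda$ is the union, over complex lines $L$ through points of $A_0\times A_1$, of the planar horocyclic averages. This reduces the problem to a computation in the hyperbolic plane, with $\CC\HH^d$ volume obtained by integrating over the lines.

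\textbf{Step 3: compute $\Vol(A_\lambda)$ and compare.} I will compute the volume of $A_\lambda$ explicitly and compare it to $(1-\lambda)\Vol(A_0)^{1/n}+\lambda\Vol(A_1)^{1/n}$. I expect this to be the main obstacle. Equality requires a precise algebraic identity between hyperbolic volume functions, namely that the appropriate radius function $s(r)$, defined by $\Vol(B(p,r))=c\,s(r)^n$, interpolates linearly along horocyclic midpoints. Step 2 does not produce such an identity, so at present I do not have a working method for this step.

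A more promising route would be the equality analysis in the proof of Theorem~\ref{thm:main}: since $\Ric_{\Omega_1}\equiv0$, one would check that the needle densities from Proposition~\ref{prop:magnetic-needle} are exactly affine-to-the-$n$ along horocycles emanating radially from $p$. That check is not carried out here.
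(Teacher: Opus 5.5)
Your proposal does not prove the lemma. Steps 2 and 3 are left open, as you acknowledge, so the required upper bound $\Vol(A_\lambda)^{1/n}\le(1-\lambda)\Vol(A_0)^{1/n}+\lambda\Vol(A_1)^{1/n}$ is never obtained. Step 1, and the observation that only this upper bound is needed, are fine. Two concrete ingredients are missing.

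First, the ``radius function'' you are looking for is explicit: $\Vol(B(p,r))=c_d\sinh^{2d}(r/2)$. Since $n=2d$, this gives $\Vol(B(p,r))^{1/n}=c_d^{1/n}\sinh(r/2)$. So the lemma amounts to showing that $A_\lambda$ is the ball about $p$ of radius $r_\lambda$, where $\sinh(r_\lambda/2)=(1-\lambda)\sinh(r_0/2)+\lambda\sinh(r_1/2)$.

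Second, this is a statement about single horocycles, not about volumes. For every unit-speed horocycle $\gamma$, the function $t\mapsto\sinh(d(p,\gamma(t))/2)$ is convex, and it equals $|t-t_*|/2$ if $\gamma(t_*)=p$.
\begin{itemize}
\item Convexity immediately gives $A_\lambda\subseteq\overline{B}(p,r_\lambda)$, which is your missing upper bound.
\item Now run along a horocycle through $p$, from the point at parameter distance $2\sinh(s_0/2)$ from $p$ to the point at parameter distance $2\sinh(s_1/2)$, both on the same side of $p$. Choose that side so the segment is traversed in its orientation; horocycle segments in $\CC\HH^d$ are minimizing. The $\lambda$-midpoint is then at distance $s$ from $p$, with $\sinh(s/2)=(1-\lambda)\sinh(s_0/2)+\lambda\sinh(s_1/2)$.
\item Letting $s_i$ range over $[0,r_i]$, every radius in $[0,r_\lambda]$ is attained. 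With your $\mathrm{U}(d)$-symmetry, $A_\lambda$ is the full ball, and equality follows.
\end{itemize}

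The convexity comes from an elementary computation, not from integrating over complex lines. Let $H$ be the complex geodesic containing $\gamma$, and let $q$ be the orthogonal projection of $p$ to $H$. The triangle $p\,q\,\gamma(t)$ is a totally real right triangle, so it lies in a totally geodesic surface of curvature $-1/4$. The hyperbolic Pythagorean theorem there gives $\cosh(d(p,\gamma(t))/2)=a\cosh(d(q,\gamma(t))/2)$, with $a=\cosh(d(p,q)/2)\ge1$.

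Take an upper half-plane model of $H$ with $q=i$ and $\gamma$ the horizontal line at height $y_0$, with $\gamma(0)$ closest to $q$. Then $\sinh^2(d(q,\gamma(t))/2)=\bigl((y_0t)^2+(y_0-1)^2\bigr)/(4y_0)$. Combining the two, $\sinh(d(p,\gamma(t))/2)=\sqrt{\alpha t^2+\beta}$ with $\alpha=a^2y_0/4>0$ and $\beta\ge a^2-1\ge0$. This is convex, and it reduces to $|t|/2$ when $p\in\gamma$, since then $a=y_0=1$.

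Your planar reduction points in this direction. The complex line carrying a horocycle from $A_0$ to $A_1$ generally misses $p$, and the Pythagorean identity is exactly what controls $d(p,\cdot)$ on it. However, summing volumes over overlapping complex lines is unnecessary and hard to make rigorous.

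The needle-decomposition route cannot work for this purpose. It only ever yields lower bounds on $\Vol(A_\lambda)$. Equality needs an upper bound, which requires controlling the midpoints of \emph{all} horocycles from $A_0$ to $A_1$, not just those along needles.
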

			\begin{proof}
				By symmetry, the set $A_\lambda$ is a ball.
				The volume of a ball of radius $r > 0$ in $\CC\HH^d$ is given by $c_d\cdot \sinh^{2d}(r/2)$ where $c_d$ is a dimensional constant,
				see~\cite[Lemma~6.18]{Gra90}.
				Thus it remains to prove that the radii $r_0,r_\lambda,r_1$ of the balls $A_0,A_\lambda,A_1$ satisfy the relation
				\[
					\sinh(r_\lambda/2) = (1-\lambda)\sinh(r_0/2) + \lambda\sinh(r_1/2).
				\]
				Denote the common center of the balls by $x$.
				By the definition of the horocyclic Minkowski average,
				we need to show that if $\gamma$ is a unit-speed horocycle satisfying $d(\gamma(0),x) \le r_0$ and $d(\gamma(\ell),x) \le r_1$,
				then $\sinh(d(\gamma(\lambda \ell),x)/2) \le (1-\lambda)\sinh(r_0/2) + \lambda\sinh(r_1/2)$,
				and that equality is attained for some horocycle $\gamma$.
				This would in turn follow if we prove that for every horocycle $\gamma$,
				the function $t \mapsto \sinh(d(\gamma(t),x)/2)$ is convex, and that it is linear when $x$ lies on $\gamma$. 

				Let $p$ denote the orthogonal projection of $x$ to the complex geodesic $H$ containing $\gamma$.
				Then the geodesic joining $x$ to $p$ is perpendicular to $H$,
				whence $\triangle xp\gamma(t)$ is a totally real right triangle, i.e.
				it is contained in a totally geodesic surface of curvature $\equiv -1/4$.
				The hyperbolic law of cosines in such a surface implies that
				\begin{equation}\label{eq:horo-cosh}
				\cosh\!\left(\frac{d(x,\gamma(t))}{2}\right)
				=
				{a}\cdot
				\cosh\!\left(\frac{d(p,\gamma(t))}{2}\right)
				\qquad \text{where} \qquad
				{a} : = \cosh\!\left(\frac{d(x,p)}{2}\right).
				\end{equation}

				We may translate the domain of $\gamma$ so that $\gamma(0)$ is the point on $\gamma$ closest to $p$,
				and then we can write
				\begin{equation}\label{eq:horo-arcsinh}
					d(p,\gamma(t)) = 2\cdot\mathrm{arcsinh}\left(\frac{\sqrt{(y_0t)^2 + (y_0-1)^2}}{2\sqrt{y_0}}\right),
				\end{equation}
				where $y_0$ is the constant imaginary part of $\gamma$ when it is represented as a horizontal line in an upper half-plane coordinate system on $H$ in which $p = i$. Combining~\eqref{eq:horo-cosh},~\eqref{eq:horo-arcsinh} and the identities $\cosh(\mathrm{arcsinh}(u)) = \sqrt{1+u^2}$ and $\sinh(\mathrm{arccosh}(v)) = \sqrt{v^2-1}$, we obtain
				\begin{align*}
					\sinh\left(\frac{d(x,\gamma(t))}{2}\right)
					& =
					\sqrt{{a}^2\left(1 + \frac{(y_0t)^2 + (y_0-1)^2}{4y_0}\right)-1},
				\end{align*}
				which is evidently convex in $t$. Moreover, if $x$ lies on $\gamma$ then $p = x$,
				${a} = 1$ and $y_0 = 1$, and the formula simplifies to $|t|/2$. This finishes the proof.
			\end{proof}
	\subsection{Sasakian manifolds}

		An \emph{almost contact structure} on an $n = (2m+1)$-dimensional smooth manifold $M$ is a triple $(\eta,\xi,\Phi)$ consisting of a $1$-form $\eta$,
		a vector field $\xi$, and a $(1,1)$-tensor $\Phi$ such that
		\[
			\eta(\xi)=1 \qquad \text{ and } \qquad \qquad \Phi^2 = -\mathrm{Id} + \eta \otimes \xi.
		\]
		A Riemannian metric $g$ on $M$ is \emph{compatible} with $(\eta,\xi,\Phi)$ if
		\begin{equation*}
			g(\Phi X,\Phi Y) = g(X,Y) - \eta(X)\eta(Y)
		\end{equation*}
		for all vector fields $X,Y$.
		The quintuple $(M,g,\eta,\xi,\Phi)$ is then called an \emph{almost contact metric manifold}.
		An almost contact metric manifold is \emph{Sasakian} if its metric cone is K\"ahler; see~\cite{Bl} for more details.
		The main examples we have in mind are $S^n$ and the Heisenberg group,
		both of which will be discussed below.
		For works on magnetic geodesics on Sasakian manifolds see~\cite{CFG,DIMN,Ika22,MN}.
		Let $(M,g,\eta,\xi,\Phi)$ be a Sasakian manifold of dimension $n = 2m + 1$, let $c \in \RR$ and set
		\[
			\Omega_c := c \cdot d\eta.
		\]

		Magnetic geodesics for the magnetic field $\Omega_c$ will be called \emph{contact $c$-magnetic geodesics}.
		When $c=1$ we will call them simply \emph{contact magnetic geodesics}.
		The \emph{contact $c$-magnetic Minkowski $\lambda$-average} $A_\lambda$ of a pair of subsets $A_0,A_1 \subseteq M$ is the set of $\lambda$-midpoints of minimizing contact magnetic geodesics joining $A_0$ to $A_1$. 
		
		We have
		\[
			d\eta(X,Y) = 2\cdot g(X,\Phi Y), \qquad
			g(\Phi X,\Phi Y) = g(X,Y) - \eta(X)\eta(Y),
		\]
		for every pair $X,Y$ of vector fields, and
		\[
			\delta d\eta = 4m \cdot \eta;
		\]
		see~\cite[Chapter~6]{Bl}. Therefore
		\[
			|\Omega_c|^2 = 4c^2 \cdot|\Phi|^2 = 8mc^2, \qquad
			|\iota_v\Omega_c|^2 = 4c^2 \cdot|\Phi v|^2 = 4c^2 \cdot\left(1 - \eta(v)^2\right), \quad v \in SM
		\]
		and
		\[
			\delta\Omega_c = 4cm \cdot\eta.
		\]

		Combining the above formulae with the definition of the magnetic Ricci curvature we get
		\begin{equation}
			\Ric_{\Omega_c}
			= \Ric + 2c^2\cdot(m+1) - 4cm \cdot\eta  - 2c^2 \cdot \eta^2,
		\end{equation}
		where again both $\Ric$ and $\eta$ are viewed as functions on $SM$.

		Since $d\eta = 2g(\,\cdot\,,\Phi \,\cdot\,)$,
		the magnetic geodesic equation $g(\nabla_{\dot\gamma}\dot\gamma,\cdot) = \Omega_c(\dot\gamma,\cdot)$ reads
		\[
			\nabla_{\dot\gamma}\dot\gamma = -2c\,\Phi\dot\gamma.
		\]

		Using $\nabla\xi = -\Phi$ and $\Phi\xi = 0$, we obtain
		\[
			\frac{d}{dt}\eta(\dot\gamma)
			= g(\nabla_{\dot\gamma}\dot\gamma,\xi) + g(\dot\gamma,\nabla_{\dot\gamma}\xi)
			= -2c\,g(\Phi\dot\gamma,\xi) - g(\dot\gamma,\Phi\dot\gamma)
			= 0.
		\]

		Hence $\eta(\dot\gamma)$ is constant along every magnetic geodesic. In particular,
		if $\dot\gamma(0) \in \cD : = \ker\eta$ then $\gamma$ is a geodesic of the sub-Riemannian structure $(M,\cD,g\vert_{\cD})$; see~\cite[Lemma 6.7]{ABR17} or~\cite[Proposition 15]{Rum92}.

		\subsubsection{Odd-dimensional spheres}

			Take $M = S^n \subseteq \CC^{d+1}$ to be the unit sphere in complex $(d+1)$--dimensional Euclidean space,
			$n = 2d+1$, and
			\[
				\eta\vert_z := \Re\la iz,\cdot\ra _{\mathbb{C}^{d+1}} = \frac{i}{2}\sum_{j=1}^{d+1}\bigl(z^jd\bar z^j - \bar z^jdz^j\bigr), \qquad z \in S^{2d+1},
			\]
			so that
			\[
				\Omega_c = c \cdot i\sum_{j=1}^{d+1}dz^j\wedge d\bar z^j.
			\]
			Fibers of the Hopf fibration are integral curves of the Reeb vector field $\xi$,
			and since $\iota_\xi\Omega_c = 0$, they are both ordinary and magnetic geodesics.
			We have that $\Ric \equiv 2d$ whence
			\[
				\Ric_{\Omega_c} = 2d + 2c^2(d+1) - 4cd\eta - 2c^2\eta^2,
			\]
			so since $|\eta| = 1$,
			\[
				\Ric_{\Omega_c} \ge 2d+2c^2(d+1) -4cd - 2c^2 = 2d(1 - c)^2 \ge 0.
			\]
			On each fiber of $SM$, the minimum and maximum of $\Ric_{\Omega_c}$ are attained at the Reeb directions $\pm \xi$.

			A detailed description of contact magnetic geodesics on odd-dimensional spheres was given in~\cite{ABM}.
			It was shown that if $|c| < 1$,
			then every pair of points can be joined by a minimizing magnetic geodesic, i.e. assumption (I) holds.
			Since $|c\eta| < 1$ and $S^n$ is compact, assumptions (II) and (III) also hold. Thus we have the following:

			\begin{theorem}[Contact magnetic Brunn--Minkowski inequality in $S^{2d+1}$~\cite{Ass25}]
				Let $A_0,A_1 \subseteq S^{2d+1}$ be Borel sets of positive measure and let $0 \le \lambda \le 1$.
				For every $c \in (-1,1)$,
				the contact $c$-magnetic Minkowski $\lambda$-average $A_\lambda$ of $A_0$ and $A_1$ satisfies
				\[
					\Vol(A_\lambda)^{1/n} \ge (1-\lambda)\cdot \Vol(A_0)^{1/n} + \lambda\cdot\Vol(A_1)^{1/n},
				\]
				where $\Vol$ denotes the spherical volume measure and $n = 2d+1$.
			\end{theorem}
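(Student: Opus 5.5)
The plan is to deduce the statement from the implication (i)$\implies$(ii) of Theorem~\ref{thm:main}, applied to $M = S^{2d+1}$ with the round metric and the exact magnetic field $\Omega_c = c\cdot d\eta$, using the primitive $c\eta$. Two things must be checked: the global assumptions (I)--(III) and the curvature condition $\Ric_{\Omega_c}\ge 0$ on $SM$. Once both hold, Theorem~\ref{thm:main} gives the inequality verbatim, with $n = 2d+1$ and $\Vol$ the round volume.

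The curvature condition has essentially been computed above. The Sasakian formula gives
\[
\Ric_{\Omega_c} = \Ric + 2c^2(d+1) - 4cd\,\eta - 2c^2\eta^2 .
\]
On the round sphere $\Ric \equiv 2d$ on $SM$. Because $\xi$ is a unit field and $\eta = g(\xi,\cdot)$, we have $|\eta(v)|\le 1$ for every $v\in SM$. The right-hand side is a concave quadratic in $s=\eta(v)\in[-1,1]$, so its minimum is attained at $s=\pm1$. Evaluating gives $2d(1-c)^2$ at $s=1$ and $2d(1+c)^2$ at $s=-1$, and both are nonnegative. This holds for every $c$; the restriction $|c|<1$ is needed only for the global assumptions.

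Assumption (II) holds because $|c\eta|_g = |c| < 1$. For any closed nonconstant piecewise-$C^1$ curve this gives $\int_\gamma c\eta \le |c|\Len[\gamma] < \Len[\gamma]$. Assumption (III) is vacuous, since $S^{2d+1}$ is compact and we may take $\tilde A = M$. For assumption (I) there are two routes, and I would cite both:
\begin{itemize}
\item Remark~\ref{rem:global-assumptions} records that in the compact case (I) follows from (II) by~\cite[Theorem X]{CDI}.
\item Alternatively, the explicit analysis of contact magnetic geodesics on spheres in~\cite{ABM} shows directly that for $|c|<1$ any two points are joined by a minimizing magnetic geodesic.
\end{itemize}

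The only place requiring care is assumption (I), since existence of minimizers can fail at or beyond the critical value $|c|=1$. The quickest route is the compact-case equivalence via the strict bound $|c\eta|<1$. This step also shows why the statement excludes $|c|=1$: there (II) fails, because the Hopf fibres are closed curves with $\int c\eta = \Len$. With (I)--(III) and $\Ric_{\Omega_c}\ge 0$ in hand, the inequality follows directly from Theorem~\ref{thm:main}.
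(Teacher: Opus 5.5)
Your proposal is correct and is essentially the paper's argument. The paper also derives the statement from the implication (i)$\implies$(ii) of Theorem~\ref{thm:main}: it checks $\Ric_{\Omega_c}\ge 0$ from the Sasakian formula with $\Ric\equiv 2d$, and assumptions (I)--(III) from $|c\eta|=|c|<1$, compactness, and \cite{ABM} (or \cite{CDI}). Your evaluation of the concave quadratic at both $\eta(v)=\pm1$ is slightly more careful than the paper's displayed bound $2d(1-c)^2$, which holds only for $c\ge 0$; for $c<0$ the fiberwise minimum is $2d(1+c)^2$, still nonnegative.
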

		\subsubsection{The Heisenberg group}\label{sec:heisenberg-example}

			Let $M = \Heis^m \cong \RR^{2m+1}$ be the Heisenberg group,
			endowed with the left-invariant metric
			\[
				g = \sum_{j=1}^m (dx^j)^2 + \sum_{j=1}^m (dy^j)^2 + \left(dz - \sum_{j=1}^m y^jdx^j\right)^2
			\]
			and the contact form
			\[
				\eta := dz - \sum_{j=1}^m y^jdx^j,
			\]
			in coordinates $(x^1,\dots,x^m,y^1,\dots,y^m,z) \in \RR^m\times\RR^m\times \RR$.
			The Ricci curvature of $g$ is given by
			\[
				\Ric = -\frac12g + \frac{m+1}{2}\eta^2,
			\]
			see for instance~\cite[Chapter~7]{Bl} or~\cite{BGM06} (note different normalizations).
			Moreover,
			\[
				d\eta(X,Y)=g(X,\Phi Y),\qquad
				\delta d\eta=m\eta,
			\]
			and consequently
			\[
				|\Omega_c|^2=2mc^2,\qquad
				|\iota_v\Omega_c|^2=c^2(1-\eta(v)^2),\qquad
				\delta\Omega_c=cm\eta.
			\]
			Note that in this example we are using a different normalization than in the general Sasakian case, namely $d\eta(X,Y)=g(X,\Phi Y)$ rather than $d\eta(X,Y)=2g(X,\Phi Y)$. Substituting the above identities into the definition of magnetic Ricci curvature, we get
			\begin{align*}
				\Ric_{\Omega_c}
				& = \frac m2(c-\eta)^2 + \frac12(c^2-1)(1-\eta^2).
			\end{align*}
			It follows that
			\[
				\Ric_{\Omega_c} \ge 0
				\qquad \iff \qquad
				|c| \ge 1.
			\]

			As in the previous example, $|c|=1$ is the critical case for connectivity by minimizing magnetic geodesics:

			\begin{lemma}\label{lem:heisenberg-conditions}
				Assumptions~\textup{(I)}--\textup{(III)} hold for $|c| < 1$,
				assumption~\textup{(II)} fails for $|c|>1$ and holds for $c = \pm 1$,
				and assumption~\textup{(I)} fails for $c = \pm 1$.
			\end{lemma}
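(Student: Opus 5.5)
\emph{Preliminary observation.} The proof rests on one fact: $\eta$ has pointwise norm $1$ for $g$. Indeed, $\xi=\partial_z$ is $g$-unit, $\eta(\xi)=1$, and $\ker\eta$ is $g$-orthogonal to $\xi$. Hence $|c\eta|_g\equiv|c|$. Along any piecewise-$C^1$ curve this gives $c\int_\gamma\eta\le|c|\Len[\gamma]$. For $c>0$, equality holds exactly when $\dot\gamma$ is a nonnegative multiple of $\xi$ almost everywhere, and analogously for $c<0$. Below, $\gamma=(x,y,z)$ denotes the coordinates of a curve, and $\dot z$ is the derivative of its $z$-coordinate.

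\emph{Case $|c|<1$.} Here $|c\eta|_g\le|c|=1-\eps$ with $\eps=1-|c|>0$. The metric $g$ is left-invariant, hence complete. Remark~\ref{rem:global-assumptions} with $f=0$ then gives (I) and (III). Assumption (II) follows from $c\int_\gamma\eta\le|c|\Len[\gamma]<\Len[\gamma]$ for every nonconstant curve.

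\emph{Case $c=\pm1$, assumption (II).} By the equality case above, $\int_\gamma c\eta=\Len[\gamma]$ forces $c\,\eta(\dot\gamma)=|\dot\gamma|$ almost everywhere. For $c=1$ this means $\dot z=\eta(\dot\gamma)=|\dot\gamma|$. A nonconstant curve then has strictly increasing $z$ and cannot be closed; for $c=-1$ the $z$-coordinate strictly decreases instead. So the inequality in (II) is strict.

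\emph{Case $|c|>1$, assumption (II) fails.} I construct an explicit closed curve, taking $c>1$ and reversing orientation for $c<-1$.
\begin{enumerate}[(a)]
\item Go up the vertical segment of height $h$. This has length $h$ and $\int\eta=h$.
\item Return along a horizontal curve ($\eta=0$) whose $(x,y)$-projection is a circle enclosing area $h$, oriented so that $\Delta z=\oint y\,dx=-h$.
\end{enumerate}
The loop closes, has length $h+2\sqrt{\pi h}$, and satisfies $c\int\eta=ch$. Since $ch>h+2\sqrt{\pi h}$ for $h$ large, (II) fails.

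\emph{Case $c=\pm1$, assumption (I) fails.} This is the main obstacle; take $c=1$. I would choose $p$ and $q=p-h\xi$, with $q$ directly below $p$. The plan has four steps.
\begin{enumerate}[(a)]
\item \emph{Upper bound.} The competitors above (a vertical segment, which costs $0$, concatenated with a horizontal loop) give $\inf(\Len-\int\eta)\le 2\sqrt{\pi h}$. Varying the split between the vertical and horizontal parts may lower this further. I would compute the infimum $I(h)$ by minimizing over such broken curves, and also over curves that stay vertical for long stretches.
\item \emph{Classify the magnetic geodesics.} Since $\eta(\dot\gamma)\equiv a$ is constant, each unit-speed magnetic geodesic has horizontal part of speed $\sqrt{1-a^2}$. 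By left-invariance and the equation $\nabla_{\dot\gamma}\dot\gamma=-\Phi\dot\gamma$ in this normalization, its $(x,y)$-projection is a circle of radius determined by $a$, or a point when $a=\pm1$. This yields a two-parameter family of helices.
\item \emph{Cost of geodesics from $p$ to $q$.} I would compute the cost of each helix joining $p$ to $q$ and show it is bounded below by a quantity strictly larger than $I(h)$. The intuition is that minimizing sequences want to spend a long vertical stretch at zero cost and then turn a corner, and a corner is not smooth.
\item \emph{Conclusion.} Minimizers would be smooth solutions of~\eqref{eq:lorentz-force}, so none exists and (I) fails.
\end{enumerate}

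\emph{Fallback.} If this explicit comparison is messy, I would instead take a minimizing sequence and use the dilations $(x,y,z)\mapsto(sx,sy,s^2z)$ to compare costs. The aim is to show that any putative minimizer could be strictly improved by inserting a vertical segment, which costs $0$, and a correspondingly modified loop.
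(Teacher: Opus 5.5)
\textbf{The gap: failure of (I) for $c=\pm1$.} Your arguments for $|c|<1$, for the failure of (II) when $|c|>1$, and for (II) when $c=\pm1$ are correct and essentially the paper's. For $|c|>1$, your vertical segment plus horizontal circle works as well as the paper's square.

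The gap is the failure of (I) for $c=\pm1$. You only outline this part, and the outline is missing exactly what makes it work.

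\begin{itemize}
\item You never fix $h$, yet the non-existence you want to prove is false for small $h$. By Lemma~\ref{lem:heisenberg-minimizers}, part (iii), the descending vertical segment from $p$ to $p-h\xi$ is the unique minimizer when $h<\pi$, and minimizers also exist when $h=\pi$. So the argument must use $h>\pi$ somewhere, and nothing in your steps (a)--(d) or in the dilation fallback does.
\item Step (c) carries the whole content, and it is not executed.
\item The intuition you offer for step (c) is wrong. A curve that first rises by $s$ at zero cost and then closes with a horizontal loop costs $2\sqrt{\pi(s+h)}$, which increases with $s$. The actual infimum, $2\pi$ for $h>\pi$, is approached by helices whose horizontal radius tends to infinity. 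It is not approached by long vertical stretches followed by a corner.
\end{itemize}

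\textbf{How your route can be completed.} Your indirect approach can nevertheless be finished, and it then genuinely differs from the paper's.

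\begin{itemize}
\item From the Euler--Lagrange equations of $\tfrac12 g-\eta$, a unit-speed magnetic geodesic with $\eta(\dot\gamma)\equiv\alpha\in(-1,1)$ has horizontal velocity rotating within a complex line at angular speed $1-\alpha$. Its projection is therefore a circle of radius $\sqrt{(1+\alpha)/(1-\alpha)}$.
\item Each full turn changes $z$ by $\frac{2\pi\alpha}{1-\alpha}-\frac{\pi(1+\alpha)}{1-\alpha}=-\pi$, and costs $\int(|\dot\gamma|-\eta(\dot\gamma))\,dt=2\pi$.
\item Hence the only magnetic geodesics from $p$ to $p-h\xi$ are the descending vertical segment and, when $h=k\pi$, the $k$-turn helices. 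Every one of them costs exactly $2h$.
\item Your horizontal loop costs $2\sqrt{\pi h}$, which is $<2h$ precisely when $h>\pi$. Since minimizers are magnetic geodesics, none exists for $h>\pi$.
\end{itemize}

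\textbf{Comparison with the paper.} The paper instead computes the infimum directly. Minkowski's inequality and Dido's problem reduce the cost to the one-variable function $\sqrt{4\pi|a|+(z_1+a)^2}-(z_1+a)$ of the enclosed signed area $a$. For $z_1<-\pi$, this function is $\ge 2|z_1|>2\pi$ for $a\le0$ and decreases strictly to $2\pi$ on $[0,\infty)$, so the infimum is not attained. That route needs no regularity of minimizers and pins down the threshold $-\pi$ exactly. Your route instead relies on minimizers solving the Euler--Lagrange equation, but yields the neat fact that all connecting geodesics cost $2h$.
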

			\begin{proof}
				If $|c|<1$ then the magnetic potential $c\eta$ has norm bounded uniformly away from 1,
				so assumptions~\textup{(I)}--\textup{(III)} hold by Remark~\ref{rem:global-assumptions}.

				Suppose that $|c|>1$ and let $\gamma$ denote a square of sidelength $a$ in the $(x^1,y^1)$-plane,
				with orientation chosen according to the sign of $c$. Then for $a$ sufficiently large,
				\[
					\Len[\gamma] = a + a + \sqrt{a^2 + a^4} + a < |c|\cdot a^2 = \int_\gamma c\eta,
				\]
				so assumption (II) fails in this case.
				
				Now assume that $|c|=1$. By symmetry it suffices to consider the case $c=1$.
				Since $\eta$ has unit norm,
				for every unit-speed curve $\gamma$ we have $\int_\gamma \eta \le \Len[\gamma]$,
				with equality if and only if $\dot\gamma \equiv \partial_z$.
				But the integral curves of $\partial_z$ are vertical lines, which are not closed.
				Therefore assumption~\textup{(II)} holds.
				
				Assumption~\textup{(I)} fails since there is no minimizing
				magnetic geodesic joining $(0,0,0)$ to $(0,0,z_1)$ when
				$z_1<-\pi$, see
				Lemma~\ref{lem:heisenberg-minimizers}.  
			\end{proof}
	
	\subsection{Closed magnetic potentials}\label{sec:closed}

		The Riemannian Minkowski average was defined in~\cite{CMS,Sturm2} to be the set of midpoints of minimizing geodesics joining a given pair of sets.
		If instead one allows \emph{all} geodesics joining the pair of sets in the definition,
		then the inequality is of course still true, but is potentially much weaker---indeed,
		the resulting Minkowski average can then be very large (think of two sets on the torus).
		The magnetic setting provides a way of selecting a different subclass of geodesics when forming the Minkowski average---by adding a closed,
		cohomologically nontrivial one-form $\eta$ to the Riemannian Lagrangian.
		Closedness of $\eta$ means that magnetic geodesics are ordinary geodesics,
		and that the magnetic Ricci curvature coincides with the Riemannian Ricci curvature. However,
		the class of \emph{minimizing} magnetic geodesics may differ from the class of length-minimizing geodesics.
		Thus, for each cohomology class we get a potentially different Minkowski average, and if, say,
		the manifold is geodesically convex and has nonnegative Ricci curvature,
		then all these Minkowski averages satisfy the (undistorted) Brunn--Minkowski inequality. 

		The following is a simple example in which adding a closed one-form strictly decreases the volume of the Minkowski average of a given pair of sets,
		making the Brunn--Minkowski inequality tighter for this pair.
		We will denote the magnetic Minkowski average by $A_\lambda^{\mathrm{mag}}$ to distinguish it from the geodesic Minkowski average $A_\lambda^{\mathrm{geo}}$.

		Let $(M,g)$ be the cylinder $M = S^1\times \RR$ with the flat metric $g = d\theta^2 + dt^2$, and let 
		\[
			A_0 = \left(0,\tfrac12\pi\right)\times \left(0,1\right) \quad \text{and} \quad A_1 := \left(\pi,\tfrac32\pi\right)\times \left(0,1\right).
		\]

		Then the geodesic Minkowski $1/2$-average of $A_0$ and $A_1$ is
		\[
			A_{1/2}^{\text{geo}} = \left(\left(\tfrac12\pi,\pi\right)\times\left(0,1\right)\right)\cup\left(\left(\tfrac32\pi,2\pi\right)\times\left(0,1\right)\right),
		\]
		and if we set $\eta = \tfrac12d\theta$ then the magnetic Minkowski $1/2$-average is
		\[
			A_{1/2}^{\text{mag}} = \left(\tfrac12\pi,\pi\right)\times\left(0,1\right).
		\]
		Indeed, the magnetic potential $\eta$ strictly reduces the action of geodesics with a positive $\theta$ displacement,
		thus breaking the tie between pairs of minimizing geodesics with the same length.
		A similar example can be constructed on the torus; see Figure~\ref{fig:cylinder-torus}.

		\begin{figure}[ht]
		\centering
		% ── helper commands for torus boundary ───────────────────────────────────
		\newcommand{\tftbdry}{%
		  \draw[thick](0,0)rectangle(4,4);
		  % faint half-period grid
		  \draw[gray!25,very thin](2,0)--(2,4)(0,2)--(4,2);
		  % identification marks: left/right edges (theta2-direction, pointing up)
		  \fill[black!55](-0.10,1.90)--(0.10,1.90)--(0,2.10)--cycle;
		  \fill[black!55](3.90,1.90)--(4.10,1.90)--(4,2.10)--cycle;
		  % identification marks: bottom/top edges (theta1-direction, pointing right)
		  \fill[black!55](1.90,-0.10)--(1.90,0.10)--(2.10,0)--cycle;
		  \fill[black!55](1.90,3.90)--(1.90,4.10)--(2.10,4)--cycle;
		  % axis ticks and labels
		  \foreach \v/\lab in {0/{0},2/{\pi},4/{2\pi}}{ % chktex 1
		    \draw(\v,0)--(\v,-0.12);
		    \draw(0,\v)--(-0.12,\v);\node[left,font=\tiny]at(-0.12,\v){$\lab$};
		  }
		  \foreach \v/\lab in {0/{0},4/{2\pi}}{ % chktex 1
		    \node[below,font=\tiny]at(\v,-0.15){$\lab$};
		  }
		  \node[font=\scriptsize]at(2,-0.62){$\theta_1$};
		  \node[left,font=\scriptsize]at(-0.55,2){$\theta_2$};
		}
		\newcommand{\tftAB}{%
		  \fill[figblue!50]  (0.5,0.5)circle(0.318);
		  \fill[figorange!55](2.5,2.5)circle(0.318);
		  \node[font=\tiny]at(0.5,0.48){$A_0$};
		  \node[font=\tiny]at(2.5,2.48){$A_1$};
		}
		% ── two-column tabular: left col = geodesic, right col = magnetic ────────
		\begin{tabular}{c@{\hspace{1cm}}c}
		  % ── Row 1: cylinder ────────────────────────────────────────────────────
		  \begin{tikzpicture}[scale=1.1]
		    % A0 (blue), geodesic average (gold, two pieces), A1 (orange)
		    \fill[figblue!50]    (0,0) rectangle (1,1.5);
		    \fill[figgold!65]    (1,0) rectangle (2,1.5);
		    \fill[figorange!55]  (2,0) rectangle (3,1.5);
		    \fill[figgold!65]    (3,0) rectangle (4,1.5);
		    \draw[gray!60,thin] (1,0)--(1,1.5) (2,0)--(2,1.5) (3,0)--(3,1.5);
		    \draw[thick] (0,0) rectangle (4,1.5);
		    \fill[black!65] (-0.09,0.68)--(0.09,0.68)--(0,0.84)--cycle;
		    \fill[black!65] (3.91,0.68)--(4.09,0.68)--(4,0.84)--cycle;
		    \foreach \x/\lab in {0/{0},1/{},2/{},3/{},4/{2\pi}}{
		      \draw (\x,0)--(\x,-0.08);
		      \node[below,font=\scriptsize] at (\x,-0.10) {$\lab$};
		    }
		    \node[left,font=\scriptsize] at (0,0)   {$0$};
		    \node[left,font=\scriptsize] at (0,1.5) {$1$};
		    \node[left,font=\scriptsize] at (-0.15,0.75) {$t$};
		    \node[font=\scriptsize] at (2,-0.43) {$\theta$};
		    \node[font=\scriptsize] at (2,-0.84) {$\eta=0$};
		    \node[font=\small] at (0.5, 0.75) {$A_0$};
		    \node[font=\small] at (2.5, 0.75) {$A_1$};
		    \node[font=\scriptsize] at (1.5,0.75) {$A_{1/2}^{\text{geo}}$};
		    \node[font=\scriptsize] at (3.5,0.75) {$A_{1/2}^{\text{geo}}$};
		  \end{tikzpicture}
		  &
		  \begin{tikzpicture}[scale=1.1]
		    % A0 (blue), magnetic average (green, one piece), A1 (orange)
		    \fill[figblue!50]   (0,0) rectangle (1,1.5);
		    \fill[figgreen!55]  (1,0) rectangle (2,1.5);
		    \fill[figorange!55] (2,0) rectangle (3,1.5);
		    % [3,4] white; eta^sharp arrows there only
		    \foreach \yi in {0.375,1.125}{
		      \draw[->,>=stealth,black!40,thin](3.32,\yi)--(3.68,\yi);
		    }
		    \draw[gray!60,thin] (1,0)--(1,1.5) (2,0)--(2,1.5) (3,0)--(3,1.5);
		    \draw[thick] (0,0) rectangle (4,1.5);
		    \fill[black!65] (-0.09,0.68)--(0.09,0.68)--(0,0.84)--cycle;
		    \fill[black!65] (3.91,0.68)--(4.09,0.68)--(4,0.84)--cycle;
		    \foreach \x/\lab in {0/{0},1/{},2/{},3/{},4/{2\pi}}{
		      \draw (\x,0)--(\x,-0.08);
		      \node[below,font=\scriptsize] at (\x,-0.10) {$\lab$};
		    }
		    \node[left,font=\scriptsize] at (0,0)   {$0$};
		    \node[left,font=\scriptsize] at (0,1.5) {$1$};
		    \node[left,font=\scriptsize] at (-0.15,0.75) {$t$};
		    \node[font=\scriptsize] at (2,-0.43) {$\theta$};
		    \node[font=\scriptsize] at (2,-0.84) {$\eta=\tfrac12 d\theta$};
		    \node[font=\small] at (0.5, 0.75) {$A_0$};
		    \node[font=\small] at (2.5, 0.75) {$A_1$};
		    \node[font=\scriptsize] at (1.5,0.75) {$A_{1/2}^{\text{mag}}$};
		  \end{tikzpicture}
		  \\[0.4cm]
		  % ── Row 2: flat torus ──────────────────────────────────────────────────
		  \begin{tikzpicture}[scale=0.82]
		    \tftbdry
		    \fill[figgold!60](1.5,1.5)circle(0.318);
		    \fill[figgold!60](3.5,3.5)circle(0.318);
		    \fill[figgold!60](1.5,3.5)circle(0.318);
		    \fill[figgold!60](3.5,1.5)circle(0.318);
		    \tftAB
		    \node[below,font=\scriptsize]at(2,-0.85){$\eta=0$};
		  \end{tikzpicture}
		  &
		  % eta = (1/2)(dtheta1 + dtheta2), arrows NE, cluster at (1.5,1.5)
		  \begin{tikzpicture}[scale=0.82]
		    \tftbdry
		    \foreach \xi in {0.5,1.5,2.5,3.5}{
		      \foreach \yi in {0.5,1.5,2.5,3.5}{
		        \draw[->,>=stealth,black!22,thin](\xi-0.127,\yi-0.127)--(\xi+0.127,\yi+0.127);
		      }
		    }
		    \fill[figgreen!60](1.5,1.5)circle(0.318);
		    \tftAB
		    \node[below,font=\scriptsize]at(2,-0.85){$\eta=\tfrac12(d\theta_1+d\theta_2)$};
		  \end{tikzpicture}
		\end{tabular}
		\caption{Geodesic (left) and magnetic (right) Minkowski averages on the cylinder with $\eta = \tfrac12 d\theta$ (top) and the flat torus with $\eta = \tfrac12(d\theta_1+d\theta_2)$ (bottom). The vector field $\eta^\sharp$ is shown in gray.}
		\label{fig:cylinder-torus} % chktex 24
		\end{figure}

\section{From magnetic Ricci curvature to magnetic Brunn--Minkowski}\label{sec:ric-to-bm}

	Throughout this section and the next,
	we fix a smooth Riemannian manifold $(M,g)$ of dimension $n \ge 2$ and a one-form $\eta$ on $M$,
	such that conditions (I)-(III) from Section~\ref{sec:magnetic-geodesics} hold.
	We write $\la\cdot,\cdot\ra$ for the Riemannian inner product and $\nabla$ for the Levi-Civita connection.
	The musical isomorphisms $\sharp: T^*M \to TM$ and $\flat: TM \to T^*M$ are defined by $\sigma = \la\sigma^\sharp,\,\cdot\,\ra$ and $v^\flat = \la v,\,\cdot\,\ra$ for every $\sigma \in T^*M$ and $v \in TM$.

	In order to state the distorted magnetic Brunn--Minkowski inequality,
	we recall the comparison distortion coefficients from the synthetic curvature-dimension theory of Lott--Sturm--Villani~\cite{CMS,Sturm2,LV,Vil}.

	For $k \in \RR$, $t \in [0,1]$ and $\ell \ge 0$, define
	\[
		\tau_t^{k,n}(\ell) :=
		\begin{cases}
			t^{\tfrac1n}\left(\dfrac{\sin\left(t\ell\sqrt{\frac{k}{n-1}}\right)}{\sin\left(\ell\sqrt{\frac{k}{n-1}}\right)}\right)^{1 - \tfrac1n}, & k > 0 \text{ and } \ell\sqrt{\frac{k}{n-1}} < \pi, \\[4pt]
			+\infty, & k > 0 \text{ and } \ell\sqrt{\frac{k}{n-1}} \ge \pi, \\[4pt]
			t, & k = 0, \\[4pt]
			t^{\tfrac1n}\left(\dfrac{\sinh\left(t\ell\sqrt{\frac{-k}{n-1}}\right)}{\sinh\left(\ell\sqrt{\frac{-k}{n-1}}\right)}\right)^{1 - \tfrac1n}, & k < 0,
		\end{cases}
	\]
	with the convention $\tau_t^{k,n}(0)=t$, and for $A_0,A_1 \subseteq M$ let
	\begin{align*}
		\tau_t^{k,n}(A_0,A_1)
		:=
		\inf\left\{
			\tau_t^{k,n}(\ell)\,\middle|\,
			\begin{array}{c}
				\exists \text{ a minimizing magnetic geodesic } \gamma:[0,\ell]\to M\\
				\text{joining } A_0 \text{ to } A_1
			\end{array}
		\right\}.
	\end{align*}

	\begin{theorem}[Magnetic distorted Brunn--Minkowski inequality]\label{thm:distorted-bm}
		Suppose that $\Ric_\Omega \ge k$ on $SM$.
		Then for every pair $A_0,A_1 \subseteq M$ of Borel sets with positive volume and every $0 \le \lambda \le 1$,
		\begin{equation}\label{eq: distortedBM}
			\Vol(A_\lambda)^{1/n}
			\ge
			\tau_{1-\lambda}^{k,n}(A_0,A_1) \cdot \Vol(A_0)^{1/n}
			+
			\tau_{\lambda}^{k,n}(A_0,A_1) \cdot \Vol(A_1)^{1/n}.
		\end{equation}
	\end{theorem}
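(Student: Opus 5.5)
The plan is to follow Klartag's needle decomposition approach, as indicated in Section~\ref{sec:magnetic-geodesics}, using the magnetic needle decomposition (Proposition~\ref{prop:magnetic-needle}) from~\cite{Ass25}.

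\textbf{Reductions.} By inner regularity we may assume $A_0,A_1$ are compact. The set $A_\lambda$ is then compact, using (I), (III) and the closedness of the class of minimizing magnetic geodesics under uniform limits. If some minimizing magnetic geodesic joining $A_0$ to $A_1$ has $\ell\sqrt{k/(n-1)}\ge\pi$, the right-hand side of~\eqref{eq: distortedBM} is $+\infty$. So we must show that this cannot happen when $k>0$; this is a magnetic Bonnet--Myers statement, obtained from the same one-dimensional comparison used below. We also argue by contradiction, or via a functional (Borell--Brascamp--Lieb type) form.

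\textbf{Step 1: functional form.} Fix $\lambda$ and consider densities $f_0=\mathbf 1_{A_0}/\Vol(A_0)$ and $f_1=\mathbf 1_{A_1}/\Vol(A_1)$. Take $u=f_0-f_1$, which has zero mean, and solve the $L^1$ transport problem with the asymmetric cost
\[
c(x,y)=\inf\Bigl\{\Len[\gamma]-\int_\gamma\eta\Bigr\}.
\]
This cost is nonnegative and vanishes only on the diagonal by (II). Proposition~\ref{prop:magnetic-needle} then gives a $1$-Lipschitz (with respect to $c$) Kantorovich potential. It also gives a partition of $M$, up to a null set, into transport rays, each a minimizing magnetic geodesic. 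Disintegrating $\Vol$ along the rays gives conditional measures $\rho_\alpha(t)\,dt$. By the assumption $\Ric_\Omega\ge k$, each density satisfies the $\CD(k,n)$ condition, i.e.\ $\rho_\alpha^{1/(n-1)}$ is $k/(n-1)$-concave. This is exactly where the magnetic Bochner formula (Proposition~\ref{prop:magnetic-bochner}) enters, through the magnetic Jacobi equation (Lemma~\ref{lem:magnetic-jacobi}). The extra terms $\tfrac12|\iota_v\Omega|^2+\tfrac14|\Omega|^2$ account for the non-orthogonality of the ray field. Finally, the decomposition balances $u$: on almost every needle, $f_0$ and $f_1$ have equal mass.

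\textbf{Step 2: one-dimensional inequality.} On each needle, which is an interval $I$ carrying a $\CD(k,n)$ density, we apply the one-dimensional distorted Brunn--Minkowski inequality. For $S_0=A_0\cap I$, $S_1=A_1\cap I$, the set of $\lambda$-points of subsegments from $S_0$ to $S_1$ has $\rho$-measure bounded below by the $\tau$-combination. Since needle segments are themselves minimizing magnetic geodesics, these $\lambda$-points lie in $A_\lambda$, and their lengths are admissible in the infimum defining $\tau^{k,n}_t(A_0,A_1)$. We use the mass balance $\int_{S_0}\rho/\Vol(A_0)=\int_{S_1}\rho/\Vol(A_1)$. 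Combined with the inequality
\[
\rho(S_\lambda)\ge\bigl(\tau_{1-\lambda}\rho(S_0)^{1/n}+\tau_\lambda\rho(S_1)^{1/n}\bigr)^n,
\]
this gives
\[
\rho(S_\lambda)\ge m_\alpha\bigl(\tau_{1-\lambda}\Vol(A_0)^{1/n}+\tau_\lambda\Vol(A_1)^{1/n}\bigr)^n,
\]
where $m_\alpha$ is the common normalized mass. Integrating over $\alpha$, and using $\int m_\alpha\,d\alpha=1$, yields~\eqref{eq: distortedBM}. A technical point is that different needles may produce overlapping $\lambda$-points, so we only need a lower bound on $\Vol(A_\lambda)\ge\int\rho_\alpha(A_\lambda\cap I_\alpha)$. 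This requires only that the needles are disjoint, which they are.

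\textbf{Main obstacle.} The hardest part is that the needle partition is adapted to $f_0-f_1$, but $A_0$ and $A_1$ may overlap, and the needles transport mass in one direction only. One must check that on each needle the one-dimensional inequality is applied to the right pieces, with $S_0$ ``before'' $S_1$ along the orientation for which magnetic geodesics minimize. The cost is asymmetric, and reversed needles are not magnetic geodesics. I would first try Klartag's localization approach: reduce to the one-dimensional statement for all needles, oriented by the potential's gradient flow. If orientation causes trouble, apply the theorem to both $(A_0,A_1)$ and the potential for $-u$. Separately, I would verify that the $\CD(k,n)$ concavity holds for conditional densities, including the endpoint regularity from~\cite{Ass25}.
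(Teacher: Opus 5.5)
There is a genuine gap, and it sits exactly at the point you flag as the ``main obstacle'' but leave unresolved.

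\textbf{Why the one-dimensional step fails as stated.} On a needle $\gamma_\alpha$, only pairs $t_0\le t_1$ with $\gamma_\alpha(t_0)\in A_0$ and $\gamma_\alpha(t_1)\in A_1$ give minimizing magnetic geodesics from $A_0$ to $A_1$. A reversed segment is a magnetic geodesic for $-\eta$, not for $\eta$. So the needle only certifies that the \emph{directed} set $\{(1-\lambda)t_0+\lambda t_1 :\ t_0\in S_0,\ t_1\in S_1,\ t_0\le t_1\}$ lies in $\gamma_\alpha^{-1}(A_\lambda)$. The ordinary one-dimensional Brunn--Minkowski (or Borell--Brascamp--Lieb) inequality you invoke in Step~2 gives no lower bound for this set; it is empty when $S_0$ lies entirely after $S_1$. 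Hence your needle-wise bound on $\rho(S_\lambda)$ is unjustified. Nor can you arrange that $S_0$ lies ``before'' $S_1$, since the two sets can interleave on a needle.

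\textbf{The missing ingredient.} It is the second half of item (iii) of Proposition~\ref{prop:magnetic-needle}, which your description of the decomposition omits. For $f=\chi_{A_1}/\Vol(A_1)-\chi_{A_0}/\Vol(A_0)$ one has $\int f\,d\mu_{\alpha,t}\ge0$ for every $t\in I_\alpha$, where $\mu_{\alpha,t}$ is the needle measure restricted to $[t,\sup I_\alpha)$. Combined with the mass balance, this says that on each needle the normalized restriction to $A_1$ stochastically dominates the normalized restriction to $A_0$.

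The paper feeds this into a \emph{directed} one-dimensional Borell--Brascamp--Lieb inequality \cite[Lemma~3.8]{AK}. Suppose $h_\lambda((1-\lambda)t_0+\lambda t_1)\ge\bigl((1-\lambda)h_0(t_0)^{1/(n-1)}+\lambda h_1(t_1)^{1/(n-1)}\bigr)^{n-1}$ is assumed only for $t_0\le t_1$, but $h_1$ dominates $h_0$ in the above tail sense. Then the usual integral conclusion still holds, because the monotone rearrangement $T$ of $h_0$ onto $h_1$ satisfies $T(t)\ge t$, so only directed pairs are used. The lemma is applied to $h_{\alpha,0}=\beta^{k,n}_{1-\lambda}(A_0,A_1)\,\rho_\alpha\chi_{\gamma_\alpha^{-1}(A_0)}$, $h_{\alpha,1}=\beta^{k,n}_{\lambda}(A_0,A_1)\,\rho_\alpha\chi_{\gamma_\alpha^{-1}(A_1)}$ and $h_{\alpha,\lambda}=\rho_\alpha\chi_{\gamma_\alpha^{-1}(A_\lambda)}$, where $\beta^{k,n}_t=t^{-n}(\tau^{k,n}_t)^n$.

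\textbf{Why your fallbacks do not help.} The orientation of the needles is forced: it runs forward along minimizing magnetic geodesics. Flipping the sign of $u$ only changes which set's mass is ahead. With the wrong sign you would be certifying midpoints of magnetic geodesics from $A_1$ to $A_0$, which need not lie in $A_\lambda$ in this non-reversible setting. Even with the right sign you obtain only tail domination, not $S_0$ preceding $S_1$, so the directed inequality is still needed.

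\textbf{A minor point.} The right-hand side of~\eqref{eq: distortedBM} is $+\infty$ only if \emph{every} minimizing magnetic geodesic from $A_0$ to $A_1$ is long, since $\tau^{k,n}_t(A_0,A_1)$ is an infimum. No separate magnetic Bonnet--Myers theorem is needed either. The argument only uses $\beta^{k,n}_t(A_0,A_1)\le\beta^{k,n}_t(t_1-t_0)$ for subsegments $[t_0,t_1]$ of needles, and for $k>0$ their lengths stay below $\pi\sqrt{(n-1)/k}$ by the one-dimensional $\CD(k,n)$ condition.
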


	The implication \textup{(i)}$\implies$\textup{(ii)} in Theorem~\ref{thm:main} follows immediately from Theorem~\ref{thm:distorted-bm} by setting $k = 0$.

	Theorem~\ref{thm:distorted-bm} is a special case of a general Brunn--Minkowski inequality for Tonelli Lagrangians satisfying a curvature-dimension condition, established in~\cite[Section~6]{Ass25}. The two key ingredients in the proof are a magnetic Bochner formula (Proposition~\ref{prop:magnetic-bochner}) and a needle decomposition result (Proposition~\ref{prop:magnetic-needle}). The latter will only be stated here, and the reader is referred to~\cite{Ass25} for a proof. 

	\begin{proposition}[Magnetic Bochner's formula]\label{prop:magnetic-bochner}
		Let $U \subseteq M$ be an open set and let $\sigma$ be a one-form on $U$ satisfying $d\sigma = \Omega$ and $|\sigma|\equiv 1$.
		Then
		\begin{equation}\label{eq:magnetic-bochner}
			-\la \sigma,d\delta\sigma\ra  + \left|\Sigma\right|^2 + \Ric_\Omega(\sigma^\sharp) = 0,
		\end{equation}
		where $\Sigma$ is the symmetric part of $\nabla\sigma\vert_{\ker\sigma}$. Moreover,
		\begin{equation}\label{eq:magnetic-bochnerineq}
			-\la \sigma,d\delta\sigma\ra  + \frac{(\delta\sigma)^2}{n-1} + \Ric_\Omega(\sigma^\sharp) \le 0,
		\end{equation}
		with equality if and only if $\Sigma$ is scalar, i.e.
		$\Sigma = -\tfrac{\delta\sigma}{n-1}\,g\vert_{\ker\sigma}$.
	\end{proposition}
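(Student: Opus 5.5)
The plan is to derive \eqref{eq:magnetic-bochner} from the classical Bochner--Weitzenb\"ock identity for one-forms. Then $|\nabla\sigma|^2$ is split according to the orthogonal decomposition $T_xM = \RR V \oplus \ker\sigma$, where $V := \sigma^\sharp$ is a unit vector field on $U$. The inequality \eqref{eq:magnetic-bochnerineq} will follow from Cauchy--Schwarz applied to $\Sigma$.

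\emph{Step 1 (Weitzenb\"ock).} For any one-form $\sigma$, with $\delta = -\div$ on one-forms and the Hodge Laplacian $\Delta_H = d\delta + \delta d$, I will use
\[
	\tfrac12\Delta|\sigma|^2 = |\nabla\sigma|^2 - \la \sigma, \Delta_H\sigma\ra + \Ric(\sigma^\sharp),
\]
where $\Delta$ is the trace of the Hessian. Since $|\sigma|\equiv 1$, the left side vanishes. Since $d\sigma = \Omega$, we have $\delta d\sigma = \delta\Omega$. Therefore
\[
	0 = -\la\sigma,d\delta\sigma\ra - (\delta\Omega)(V) + |\nabla\sigma|^2 + \Ric(V).
\]
Comparing with the definition of $\Ric_\Omega$, it remains to show that
\[
	|\nabla\sigma|^2 = |\Sigma|^2 + \tfrac12|\iota_V\Omega|^2 + \tfrac14|\Omega|^2.
\]

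\emph{Step 2 (block decomposition of $\nabla\sigma$).} View $\nabla\sigma$ as the bilinear form $B(X,Y) = (\nabla_X\sigma)(Y)$. Its antisymmetric part is $\tfrac12 d\sigma = \tfrac12\Omega$. Work in an orthonormal frame $V, e_1,\dots,e_{n-1}$ with $e_i \in \ker\sigma$.
\begin{enumerate}[(a)]
	\item Differentiating $|\sigma|^2 = 1$ gives $B(X,V) = 0$ for every $X$. This kills the whole $V$-column, including $B(V,V)$.
	\item Using antisymmetry, $B(V,e_j) = B(e_j,V) + \Omega(V,e_j) = \Omega(V,e_j)$. Incidentally this shows $\nabla_V V = (\iota_V\Omega)^\sharp$, i.e. the integral curves of $V$ are magnetic geodesics.
	\item On $\ker\sigma\times\ker\sigma$, $B$ equals $\Sigma + \tfrac12\Omega|_{\ker\sigma}$. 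The symmetric and antisymmetric parts are orthogonal.
\end{enumerate}
Hence
\[
	|\nabla\sigma|^2 = |\iota_V\Omega|^2 + |\Sigma|^2 + \tfrac14|\Omega|_{\ker\sigma}|^2.
\]
Finally, the Frobenius norm satisfies $|\Omega|^2 = |\Omega|_{\ker\sigma}|^2 + 2|\iota_V\Omega|^2$. Substituting gives
\[
	|\iota_V\Omega|^2 + \tfrac14|\Omega|_{\ker\sigma}|^2 = \tfrac12|\iota_V\Omega|^2 + \tfrac14|\Omega|^2,
\]
which proves \eqref{eq:magnetic-bochner}.

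\emph{Step 3 (the inequality).} Since $B(V,V) = 0$, the trace of $B$ equals $\tr\Sigma$. The trace of $B$ is also $\div V = -\delta\sigma$, so $\tr\Sigma = -\delta\sigma$. Cauchy--Schwarz on the $(n-1)$-dimensional space $\ker\sigma$ gives
\[
	|\Sigma|^2 \ge \frac{(\tr\Sigma)^2}{n-1} = \frac{(\delta\sigma)^2}{n-1},
\]
with equality if and only if $\Sigma = \tfrac{\tr\Sigma}{n-1}\,g|_{\ker\sigma} = -\tfrac{\delta\sigma}{n-1}\,g|_{\ker\sigma}$. Inserting this into \eqref{eq:magnetic-bochner} yields \eqref{eq:magnetic-bochnerineq} and its equality case.

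The main obstacle is bookkeeping rather than ideas. Sign conventions for $\delta$ and the Laplacian must be consistent, including the sign in $\delta d\sigma = \delta\Omega$ and hence in front of $(\delta\Omega)(V)$. The factor-of-two accounting between $\tfrac14|\Omega|^2$, $\tfrac12|\iota_V\Omega|^2$ and the unnormalized Frobenius norm also needs care. I would double-check both against the K\"ahler case, where $\delta\Omega_c = 0$ and $\Ric_{\Omega_c} = \Ric + c^2\tfrac{d+1}{2}$ as in \eqref{eq:KahlerRic}.
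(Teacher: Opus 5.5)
Your proposal is correct and follows the paper's proof essentially step for step. The shared steps are: the Bochner--Weitzenb\"ock identity for one-forms with $|\sigma|\equiv 1$ and $\delta d\sigma=\delta\Omega$; the splitting of $\nabla\sigma$ into the $\iota_{\sigma^\sharp}\Omega$ block and the symmetric/antisymmetric parts on $\ker\sigma$; and Cauchy--Schwarz via $\tr\Sigma=-\delta\sigma$. The only cosmetic difference is bookkeeping. You write the antisymmetric block as $\tfrac12\Omega|_{\ker\sigma}$ and use $|\Omega|^2=|\Omega|_{\ker\sigma}|^2+2|\iota_V\Omega|^2$, while the paper writes it as $\tfrac12(\Omega+\iota_{\sigma^\sharp}\Omega\wedge\sigma)$ with norm $\tfrac14(|\Omega|^2-2|\iota_{\sigma^\sharp}\Omega|^2)$; this is the same computation.
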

	\begin{remark*} % chktex 1
		If $\Omega = 0$ then $\nabla\sigma$ is symmetric and vanishes on the span of $\sigma^\sharp$,
		so $|\Sigma|^2 = |\nabla \sigma|^2$ and we recover the usual Bochner's formula (in the case $|\sigma| \equiv 1$).
	\end{remark*}
	\begin{proof}
		By Bochner's formula for one-forms~\cite[Theorem 4.5.1]{Jost},
		\begin{equation}\label{eq: Bochner}
			-\Delta\frac{\la \sigma,\sigma\ra }{2} +  \la \Delta\sigma,\sigma\ra  = |\nabla\sigma|^2 + \Ric(\sigma^\sharp) ,
		\end{equation}
		where $\Delta = d\delta + \delta d$ is the Hodge Laplacian (which, for functions,
		differs by a sign from the Laplace-Beltrami operator), and $\nabla$ denotes covariant derivative.

		Since $|\sigma| \equiv 1$ and $d\sigma = \Omega$,
		\begin{equation}\label{eq:delta-alpha-alpha-1}
			\Delta\frac{\la \sigma,\sigma\ra }{2} = 0
			\qquad\text{and}\qquad
			\la \Delta\sigma,\sigma\ra  = \la d\delta\sigma,\sigma\ra  + \la \delta\Omega,\sigma\ra  = \la d\delta\sigma,\sigma\ra  + \delta\Omega(\sigma^\sharp).
		\end{equation}

		We now decompose the operator $\nabla \sigma$ into its tangential and normal components with respect to the direction $\sigma^\sharp$.
		In the tangential direction we have
		\[
			(\nabla\sigma)^*\sigma^\sharp  = \frac12 \cdot d\la \sigma,\sigma\ra  = 0,
		\]
		and, since $(\nabla\sigma) - (\nabla\sigma)^* = d\sigma = \Omega,$
		we also have
		\[
			(\nabla\sigma)\sigma^\sharp = (\nabla\sigma)^*\sigma^\sharp + \Omega(\sigma^\sharp,\cdot) = \Omega(\sigma^\sharp,\cdot) = \iota_{\sigma^\sharp}\Omega.
		\]

		Thus we can write
		\begin{equation}\label{eq:nabla-alpha-intermediate}
			|\nabla\sigma|^2 = |\iota_{\sigma^\sharp}\Omega|^2 + \left|\nabla\sigma\vert_{\ker\sigma}\right|^2.
		\end{equation}

		The normal component $\nabla\sigma\vert_{\ker\sigma}$ can be further decomposed into its symmetric and antisymmetric parts,
		which we denote by $\Sigma$ and $\Lambda$ respectively. The latter is given by
		\begin{align*}
			\Lambda & = \tfrac12 \left(\left(\nabla\sigma\vert_{\ker\sigma}\right) - \left(\nabla\sigma\vert_{\ker\sigma}\right)^*\right)
			\\
			& =
			\tfrac12((\nabla\sigma - \iota_{\sigma^\sharp}\Omega\otimes \sigma) - (\nabla\sigma - \iota_{\sigma^\sharp}\Omega\otimes \sigma)^*)
			\\
			& = \tfrac12\left(\Omega + (\iota_{\sigma^\sharp}\Omega)\wedge\sigma\right),
		\end{align*}
		whence
		\begin{equation}\label{eq:norm-A2}
			|\Lambda|^2 = \tfrac14\left(|\Omega|^2 - 2|\iota_{\sigma^\sharp}\Omega|^2\right).
		\end{equation}

		Since $\Sigma$ and $\Lambda$ are orthogonal in the Frobenius inner product,
		it follows from~\eqref{eq:nabla-alpha-intermediate} and~\eqref{eq:norm-A2} that
		\begin{equation}\label{eq:nabla-alpha-2}
			|\nabla\sigma|^2  = \tfrac12|\iota_{\sigma^\sharp}\Omega|^2 + \tfrac14|\Omega|^2 + |\Sigma|^2.
		\end{equation}
		Putting together~\eqref{eq: Bochner},~\eqref{eq:delta-alpha-alpha-1} and~\eqref{eq:nabla-alpha-2},
		we arrive at~\eqref{eq:magnetic-bochner}.
		To establish~\eqref{eq:magnetic-bochnerineq} we first note that since $(\nabla\sigma)\sigma^\sharp \perp \sigma^\sharp$,
		we have that
		\[\tr \, \Sigma = \tr(\nabla\sigma) = \div\sigma^\sharp = -\delta\sigma.\]
		Thus by the Cauchy-Schwarz inequality,
		\[
			|\Sigma|^2 \ge \frac{(\delta\sigma)^2}{n-1},
		\]
		with equality if and only if $\Sigma$ is scalar.
	\end{proof}
	\begin{lemma}\label{lem:tight}
		For every $x \in M$, every $v \in S_xM$ and every $c_0 \in \RR$ there exists a neighborhood $U \supseteq x$ and a one-form $\sigma$ on $U$ such that $d\sigma = \Omega$ and $|\sigma| \equiv 1$ on $U$, and such that
		\begin{equation}\label{eq:alpha-at-x}
			\sigma^\sharp\vert_x = v, \quad (\delta\sigma)(x) = c_0
			\qquad
			\text{ and }
			\qquad
			\la \sigma,d\delta\sigma\ra _x = \frac{c_0^2}{n-1} + \Ric_\Omega(v).
		\end{equation}
		If $c_0 = 0$, then the vector field $V : = \sigma^\sharp$ satisfies
		\[
			|V| \equiv 1, \qquad \la\nabla_VV,\,\cdot\,\ra = \Omega(V,\,\cdot\,),
		\]
		\[
			V\vert_x = v, \qquad (\div V)(x) = 0, \qquad
			(V\div V)(x) + \Ric_\Omega(v) = 0
		\]
		and
		\[
			2\cdot\la \nabla_wV,w'\ra  =  \Omega(w,w') + \Omega(v,w)\la v,w'\ra  + \Omega(v,w')\la v,w\ra , \qquad w,w' \in T_xM.
		\]
	\end{lemma}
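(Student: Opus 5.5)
The plan is to take $\sigma = \eta + du$, where $u$ solves the magnetic eikonal (Hamilton--Jacobi) equation $|\eta + du| \equiv 1$ near $x$. This choice automatically gives $d\sigma = d\eta = \Omega$ and $|\sigma| \equiv 1$. The remaining freedom is the Cauchy data of $u$ on a hypersurface through $x$, and I will use it to make $\Sigma(x)$ scalar with prescribed trace. Proposition~\ref{prop:magnetic-bochner} then gives the value of $\la\sigma, d\delta\sigma\ra_x$ for free.

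\textbf{Step 1 (Cauchy problem).} Let $S$ be a small embedded hypersurface through $x$ with $T_xS = v^\perp$; for instance, the image under $\exp_x$ of a small ball in $v^\perp$. Choose a function $\varphi$ on $S$ with $d\varphi|_x = (v^\flat - \eta_x)|_{T_xS}$. At $x$, prescribe the normal derivative $\partial_\nu u(x) := 1 - \eta(v)$, so that $\eta_x + du_x = v^\flat$, which has norm $1$. Since the required covector $v^\flat$ satisfies $\la v^\flat, v\ra = 1 \neq 0$, the hypersurface $S$ is noncharacteristic for the Hamiltonian $H(p) = \tfrac12|p + \eta|^2$. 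The method of characteristics then produces a smooth solution $u$ on a neighborhood $U$ of $x$ with $u|_S = \varphi$ and $\sigma_x = v^\flat$. The characteristics are exactly the integral curves of $\sigma^\sharp$, which are magnetic geodesics.

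\textbf{Step 2 (second-order data).} On $T_xS = \ker\sigma_x$, the restriction of $\nabla\sigma$ is $\nabla\eta|_{T_xS} + \mathrm{Hess}\,u|_{T_xS}$. Moreover,
\[
	\mathrm{Hess}\,u|_{T_xS} = \mathrm{Hess}_S\varphi(x) + \partial_\nu u(x)\cdot \II_S(x),
\]
and $\II_S(x) = 0$ for the exponential hypersurface. Since $\mathrm{Hess}_S\varphi(x)$ can be any symmetric bilinear form on $T_xS$ (choose $\varphi$ quadratic in normal coordinates on $S$), we can make the symmetric part $\Sigma(x)$ of $\nabla\sigma|_{\ker\sigma}$ equal to $-\tfrac{c_0}{n-1}\, g|_{\ker\sigma_x}$. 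By the trace identity in the proof of Proposition~\ref{prop:magnetic-bochner}, this gives $\delta\sigma(x) = -\tr\Sigma(x) = c_0$. The equality case of~\eqref{eq:magnetic-bochnerineq} at $x$ then gives
\[
	\la\sigma, d\delta\sigma\ra_x = \frac{c_0^2}{n-1} + \Ric_\Omega(v),
\]
which is~\eqref{eq:alpha-at-x}.

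\textbf{Step 3 (the case $c_0 = 0$).} Set $V = \sigma^\sharp$, so $|V| \equiv 1$.
\begin{itemize}
	\item As in the proof of Proposition~\ref{prop:magnetic-bochner}, $(\nabla\sigma)^*V = \tfrac12\, d|\sigma|^2 = 0$. Hence $\nabla_V\sigma = \iota_V\Omega$, i.e.\ $\la\nabla_V V, \cdot\,\ra = \Omega(V, \cdot\,)$.
	\item $\div V = -\delta\sigma$ vanishes at $x$.
	\item $V(\div V) = -\la\sigma, d\delta\sigma\ra$, which at $x$ equals $-\Ric_\Omega(v)$ by Step 2.
\end{itemize}
For the last formula, I use the decomposition from that proof at $x$, where $\Sigma = 0$:
\[
	\nabla\sigma = \iota_v\Omega \otimes v^\flat + \Lambda, \qquad \Lambda = \tfrac12\bigl(\Omega + \iota_v\Omega \wedge v^\flat\bigr).
\]
Evaluating on $(w, w')$ gives $2\la\nabla_w V, w'\ra = \Omega(w,w') + \Omega(v,w)\la v,w'\ra + \Omega(v,w')\la v,w\ra$. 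The components of the formula along $v$ are consistent, because $(\nabla\sigma)^*v = 0$ and $(\nabla\sigma)v = \iota_v\Omega$.

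\textbf{Main obstacle.} The main risk is the index and sign conventions in Step 2 and in the last identity. One has to be careful which slot of $\nabla\sigma$ is the differentiation slot, and this also fixes the sign of $\iota_v\Omega \wedge v^\flat$. The local solvability of the eikonal equation by characteristics is standard, given noncharacteristic data.
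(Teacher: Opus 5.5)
Your proposal is correct and follows the paper's argument. You prescribe $\sigma^\sharp|_x = v$ and $\Sigma(x) = -\tfrac{c_0}{n-1}\,g|_{\ker\sigma_x}$, read off \eqref{eq:alpha-at-x} from the pointwise equality case of Proposition~\ref{prop:magnetic-bochner}, and derive the properties of $V$ from $d\sigma=\Omega$, $|\sigma|\equiv 1$ and $\Sigma(x)=0$ (your check of the last formula, including the sign of $\iota_v\Omega\wedge v^\flat$, is right). The only difference is that the paper simply cites \cite[Lemma 2.2]{Ass25} for the existence of such $\sigma$, whereas you build it explicitly as $\sigma=\eta+du$ by the method of characteristics, with noncharacteristic Cauchy data on an exponential hypersurface orthogonal to $v$, where $\II_S(x)=0$ lets $\mathrm{Hess}_S\varphi(x)$ fix the tangential Hessian freely.
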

	\begin{proof}
		By Proposition~\ref{prop:magnetic-bochner},
		we have equality in~\eqref{eq:magnetic-bochnerineq} when the symmetric part $\Sigma$ of the restriction of $\nabla\sigma\vert_x$ to $\ker\sigma$ is scalar. We can always find a one-form $\sigma$ defined on a small neighborhood of $x$ and satisfying $d\sigma = \Omega$, $|\sigma|\equiv 1$, $\sigma^\sharp\vert_x = v$, and $\Sigma = -\frac{c_0}{n-1}\,g\vert_{\ker\sigma}$; see~\cite[Lemma 2.2]{Ass25} and references therein. For such $\sigma$,~\eqref{eq:alpha-at-x} will hold. If $c_0 = 0$ then $\Sigma = 0$, and then the conditions $d\sigma = \Omega$ and $|\sigma|\equiv 1$ imply the desired properties of the vector field $V$.
	\end{proof}

	We now state a needle decomposition result for magnetic geodesics,
	which extends Klartag's needle decomposition result for Riemannian manifolds~\cite{Kl};
	see also~\cite{Oh18,CM}.

	\begin{proposition}\label{prop:magnetic-needle}
		Let $k \in \RR$, suppose that $\Ric_\Omega \ge k$ on $SM$, and let $f \in L^1(\Vol)$ satisfy
		\[
			\int_M f\,d\Vol = 0.
		\]
		Then there exist a measure space $(\sA,\nu)$ and a family of Borel measures $\{\mu_\alpha\}_{\alpha \in \sA}$ on $M$ such that:
		\begin{enumerate}[(i)]
			\item For $\nu$-almost every $\alpha \in \sA$, either $\mu_\alpha$ is a Dirac mass, or
			\begin{equation}\label{eq:needleform}
				\mu_\alpha = (\gamma_\alpha)_*(e^{-\psi_\alpha}dt),
			\end{equation}
			where $I_\alpha \subseteq \RR$ is an interval,
			$\gamma_\alpha:I_\alpha \to M$ is a minimizing magnetic geodesic,
			and $\psi_\alpha: I_\alpha \to \RR$ is a smooth function satisfying
			\begin{equation}\label{eq:magnetic-needle-cd}
				\ddot\psi_\alpha \ge k + \frac{\dot\psi_\alpha^2}{n-1}.
			\end{equation}
			\item For every Borel measurable function $h:M \to \RR$,
			the function $\alpha \mapsto \int_M h\,d\mu_\alpha$ is $\nu$-measurable and
			\begin{equation}\label{eq:magnetic-disintegration}
				\int_M h\,d\Vol = \int_{\sA}\left(\int_M h\,d\mu_\alpha\right)d\nu(\alpha).
			\end{equation}
			\item For $\nu$-almost every $\alpha \in \sA$,
			\begin{equation}\label{eq:magnetic-needle-mass-0}
				\int_M f\,d\mu_\alpha = 0.
			\end{equation}
			Moreover, if $\mu_\alpha$ is of the form~\eqref{eq:needleform}, then for every $t \in I_\alpha$,
			\begin{equation}\label{eq:magnetic-needle-mass}
				\int_M f\,d\mu_{\alpha,t} \ge 0,
				\qquad \text{where} \quad
				\mu_{\alpha,t} := (\gamma_\alpha)_*\left((e^{-\psi_\alpha}dt)\vert_{[t,\sup I_\alpha)}\right). % chktex 9
			\end{equation}
		\end{enumerate}
	\end{proposition}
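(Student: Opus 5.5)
The plan is to adapt Klartag's $L^1$ optimal transport argument~\cite{Kl}, replacing the Riemannian distance by the magnetic action cost
\[
	c(x,y) := \inf\Bigl\{\Len[\gamma]-\int_\gamma\eta \Bigr\},
\]
where the infimum runs over piecewise-$C^1$ curves from $x$ to $y$.

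\textbf{Step 1: the cost.} By (I) the infimum is attained by a minimizing magnetic geodesic. The cost $c$ satisfies the triangle inequality. It is not symmetric, but by (II) we have $c(x,y)+c(y,x)>0$ whenever $x\ne y$. Together with (III) and Remark~\ref{rem:local-convexity}, this makes $c$ a locally Lipschitz asymmetric quasi-distance.

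\textbf{Step 2: Kantorovich potential and transport rays.} By Kantorovich duality there is a potential $u$ with $u(y)-u(x)\le c(x,y)$ for all $x,y$ that maximizes $\int u f\,d\Vol$. A \emph{transport ray} is a maximal curve $\gamma$ along which $u(\gamma(t))-u(\gamma(s))=c(\gamma(s),\gamma(t))$ for $s<t$. Such a curve is a concatenation of action minimizers, so it is a minimizing magnetic geodesic. Local uniqueness (Remark~\ref{rem:local-convexity}) gives non-branching: relative interiors of distinct rays are disjoint. This produces a partition of $M$, up to the set where $u$ is not strictly increasing along a ray, which yields the Dirac masses. Disintegrating $\Vol$ along this partition gives (ii). The mass-balance properties (iii), namely $\int f\,d\mu_\alpha=0$ and $\int f\,d\mu_{\alpha,t}\ge0$, follow as in Klartag from maximality of $u$. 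One perturbs $u$ by $u+\eps\,\mathbf 1_{\{\text{beyond } t \text{ on rays}\}}$-type modifications that keep the constraint, and uses the fact that a ray meets its future only along itself.

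\textbf{Step 3: the needle densities.} At interior points of rays, set $\sigma:=du+\eta$. The calibration identity along rays gives $\sigma(\dot\gamma)=1$ and $|\sigma|\le1$, hence $|\sigma|=1$ and $\sigma^\sharp=\dot\gamma$. We also have $d\sigma=\Omega$ wherever this makes sense. Write the conditional density along the needle $\gamma_\alpha$ as $e^{-\psi_\alpha}$, the Jacobian of the ray map. Then $\dot\psi_\alpha=-\div V=\delta\sigma$ with $V=\sigma^\sharp$. Differentiating once more along $V$ and inserting the inequality~\eqref{eq:magnetic-bochnerineq} of Proposition~\ref{prop:magnetic-bochner} yields the Riccati-type inequality
\[
	\ddot\psi_\alpha=\la\sigma,d\delta\sigma\ra\ge\frac{\dot\psi_\alpha^2}{n-1}+\Ric_\Omega(\dot\gamma_\alpha)\ge k+\frac{\dot\psi_\alpha^2}{n-1},
\]
which is~\eqref{eq:magnetic-needle-cd}. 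The fact that $\dot\gamma$ may have a component along $\nabla V$ (failure of the Gauss lemma) is already absorbed into $\Ric_\Omega$.

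\textbf{Main obstacle.} I expect the main difficulty to be making Step 3 rigorous, because $u$ is only Lipschitz. One needs $u$ to be $C^{1,1}$ on compact pieces of ray interiors, so that $V$ is locally Lipschitz there and the ray map has an absolutely continuous, a.e. twice-differentiable Jacobian. I would first try to establish local semiconcavity and semiconvexity of $u$ in ray interiors, as in Feldman--McCann. The two-sided bounds would come from comparing $u$ with $c(\gamma(s),\cdot)$ and $-c(\cdot,\gamma(t))$ for points $\gamma(s),\gamma(t)$ further back and further along the ray, each smooth near the ray interior by local uniqueness of magnetic minimizers. Since $c$ is asymmetric, these two comparison functions must be handled separately. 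The Jacobian inequality can then be derived either pointwise via the Bochner computation above, or more robustly through the magnetic Jacobi equation (Lemma~\ref{lem:magnetic-jacobi}) and a matrix Riccati comparison along each needle.
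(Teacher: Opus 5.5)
Your architecture is essentially what sits behind the paper's proof, but the paper does not carry it out. It applies the Lagrangian needle decomposition \cite[Theorem~5]{Ass25} to $L(v)=\tfrac12|v|_g^2+\tfrac12-\eta(v)$, whose free-time action is exactly your cost $c$. It then only checks that $\CD(k,n)$ for $L$ is equivalent to $\Ric_\Omega\ge k$, either by computing the weighted Ricci curvature of $L$ or via \eqref{eq:magnetic-bochnerineq} together with \cite[Theorem~1]{Ass25}; the latter is your Step~3 computation made rigorous. As a self-contained proof, your sketch has two steps that fail as written, and a third that is missing its key input.

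First, you carry over the Riemannian picture in which $u$ increases along rays. Here $du(\dot\gamma)=1-\eta(\dot\gamma)$ on a ray, which may vanish or be negative. Its sign is not even gauge invariant: replacing $\eta$ by $\eta+dh$ replaces $u$ by $u-h$ without changing the rays. For example, take flat $\RR^2$ with $\eta=dx_1$, so (I)--(III) hold and $c(p,q)=|q-p|-(q_1-p_1)\ge0$. Let $A_1=A_0+10e_1$ and $f=\chi_{A_1}-\chi_{A_0}$. The translation coupling has zero cost, so $u\equiv0$ is a maximizer, and the rays are the horizontal lines oriented by $e_1$, along which $u$ is constant. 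Your rule would assign a Dirac mass to every point, and (iii) would then force $f=0$ a.e. The Dirac part must instead be the set of points lying on no nondegenerate ray. For the same reason, level sets of $u$ cannot serve as transversals for the ray map; you need other local hypersurfaces transverse to $V=\sigma^\sharp$.

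Second, the mass-balance perturbation $u\pm\eps\mathbf 1_S$, with $S$ the part of the rays beyond $t$, never keeps the constraint. For $p=\gamma(t-\delta)$ and $q=\gamma(t+\delta)$ on one ray, the constraint $u(q)-u(p)\le c(p,q)$ is saturated, which rules out the $+$ sign. For the $-$ sign the available slack is $c(p,q)+c(q,p)$, which tends to $0$ as $\delta\to0$. What works, as in \cite{Kl}, is an optimal coupling $\pi$ of $f_-\Vol$ and $f_+\Vol$ for the cost $c$:
\begin{itemize}
\item duality concentrates $\pi$ on $\{u(q)-u(p)=c(p,q)\}$, that is, on pairs lying on a common ray with $q$ after $p$;
\item disintegrating $\pi$ along the needles gives $\int f\,d\mu_\alpha=0$;
\item since mass only moves forward along each ray, it also gives $\int f\,d\mu_{\alpha,t}\ge0$.
\end{itemize}
This requires duality and existence of optimal plans for an asymmetric, sign-changing cost, hence some integrability of $f$; this is automatic for compactly supported $f$, as in the application.

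Third, in Step~3 only your Jacobi-field fallback is viable, because $\la\sigma,d\delta\sigma\ra$ involves third derivatives of $u$. Tracing the Riccati equation $\dot B+B^2-\Omega^\sharp B+R^\Omega_{\dot\gamma}=0$ for $B=\dot J J^{-1}$ gives $|\Sigma|^2+\tfrac12|\iota_{\dot\gamma}\Omega|^2+\tfrac14|\Omega|^2$, and hence the Cauchy--Schwarz step. But this requires two conditions: $\la Bw,w'\ra-\la Bw',w\ra=\Omega(w,w')$ and $\la Bw,\dot\gamma\ra=0$. These are the twisted-Lagrangian and energy conditions inherited from $\sigma=du+\eta$ with $|\sigma|=1$. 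They are conserved by the magnetic Jacobi equation, but you must establish them at almost every starting point from a $\nabla u$ that is only Lipschitz on ray interiors. That is precisely the part the paper delegates to \cite{Ass25}.
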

	\begin{proof}
		This is precisely the needle decomposition theorem for Lagrangians~\cite[Theorem 5]{Ass25},
		applied to the magnetic Lagrangian $L(v)=\frac12|v|_g^2 + \tfrac12 -\eta(v)$,
		with $\mu=\Vol$ and $N=n$. Minimizing extremals for $L$ are minimizing magnetic geodesics,
		and the curvature-dimension condition $\CD(k,n)$ for the Lagrangian $L$ is equivalent to the lower bound $\Ric_\Omega \ge k$; this can be proved either by direct computation from the definition of the weighted Ricci curvature for Tonelli Lagrangians, as was done in~\cite[Section~7]{Ass25}, or from the Bochner inequality~\eqref{eq:magnetic-bochnerineq} combined with~\cite[Theorem 1]{Ass25}.
	\end{proof}

	\begin{proof}[Proof of Theorem~\ref{thm:distorted-bm}]
		By inner regularity of the measure $\Vol$, we may assume that $A_0$ and $A_1$ are compact. Set
		\[
			f := \frac{\chi_{A_1}}{\Vol(A_1)} - \frac{\chi_{A_0}}{\Vol(A_0)},
		\]
		where $\chi_{A_i}$ are the indicator functions of the set $A_i$,
		and apply Proposition~\ref{prop:magnetic-needle}.
		Fix $\alpha \in \sA$ such that $\mu_\alpha$ is not a Dirac mass, and write
		\[
			\mu_\alpha = (\gamma_\alpha)_*(\rho_\alpha\,dt), \qquad \rho_\alpha:=e^{-\psi_\alpha}.
		\]

		We now prove that the distorted Brunn--Minkowski inequality holds on the needle $\mu_\alpha$, i.e.
		\begin{equation}\label{eq:needlewiseBM}
			\mu_\alpha(A_\lambda)^{1/n}
			\ge
			\tau_{1-\lambda}^{k,n}(A_0,A_1)\mu_\alpha(A_0)^{1/n}
			+
			\tau_{\lambda}^{k,n}(A_0,A_1)\mu_\alpha(A_1)^{1/n}.
		\end{equation}

		By~\eqref{eq:magnetic-needle-cd},
		the measure $\rho_\alpha(t)\,dt$ satisfies the one-dimensional $\CD(k,n)$ condition~\cite{Sturm2,Vil}.
		Equivalently, the density $\rho_\alpha$ satisfies
		\begin{equation}\label{eq:rho-cd}
			\rho_\alpha((1-\lambda)t_0+\lambda t_1)^{1/(n-1)}
			\ge
			(1-\lambda)\big(\beta_{1-\lambda}^{k,n}(t_1-t_0)\rho_\alpha(t_0)\big)^{\frac1{n-1}}
			+
			\lambda\big(\beta_{\lambda}^{k,n}(t_1-t_0)\rho_\alpha(t_1)\big)^{\frac1{n-1}}
		\end{equation}
		for all $t_0\le t_1$ in $I_\alpha$, where
		\[
			\beta_t^{k,n} := t^{-n}\left(\tau_t^{k,n}\right)^n.
		\]

		Set
		\[
			A_{\alpha,0}:=\gamma_\alpha^{-1}(A_0), \qquad
			A_{\alpha,1}:=\gamma_\alpha^{-1}(A_1), \qquad
			A_{\alpha,\lambda}:=\gamma_\alpha^{-1}(A_\lambda)
		\]
		and consider on $I_\alpha$ the three functions
		\[
			h_{\alpha,0}:=\beta_{1-\lambda}^{k,n}(A_0,A_1)\rho_\alpha\chi_{A_{\alpha,0}}, \qquad
			h_{\alpha,1}:=\beta_{\lambda}^{k,n}(A_0,A_1)\rho_\alpha\chi_{A_{\alpha,1}}, \qquad
			h_{\alpha,\lambda}:=\rho_\alpha\chi_{A_{\alpha,\lambda}},
		\]
		where $\beta_t^{k,n}(A_0,A_1)$ is the infimum of $\beta_t^{k,n}(\ell)$ over all lengths $\ell$ of minimizing magnetic geodesics joining $A_0$ to $A_1$. A directed version of the one-dimensional Borell--Brascamp--Lieb inequality, see~\cite[Lemma~3.8]{AK}, asserts that if $h_0,h_1,h_\lambda$ are nonnegative integrable functions on an interval $I\subseteq\RR$ such that
		\[
			h_\lambda((1-\lambda)t_0+\lambda t_1)
			\ge
			\left(
				(1-\lambda)h_0(t_0)^{\frac{1}{n-1}}
				+
				\lambda h_1(t_1)^{\frac{1}{n-1}}
			\right)^{n-1}
		\]
		for every $t_0\le t_1$ with $h_0(t_0)h_1(t_1)>0$, and
		\[
			\frac{\int_{I\cap[t,\infty)}h_0}{\int_I h_0}
			\le
			\frac{\int_{I\cap[t,\infty)}h_1}{\int_I h_1}
			\qquad\text{for every } t\in I,
		\]
		then
		\begin{equation}\label{eq:1DBBL}
			\int_I h_\lambda
			\ge
			\left(
				(1-\lambda)\left(\int_I h_0\right)^{1/n}
				+
				\lambda\left(\int_I h_1\right)^{1/n}
			\right)^n.
		\end{equation}
		We verify these hypotheses for the functions $h_{\alpha,0}, h_{\alpha,1}, h_{\alpha,\lambda}$.

		First, let $t_0 \le t_1$. If both $h_{\alpha,0}(t_0)$ and $h_{\alpha,1}(t_1)$ are nonzero,
		then $\gamma_\alpha(t_0)\in A_0$ and $\gamma_\alpha(t_1)\in A_1$, so by definition of $A_\lambda$,
		\begin{equation}\label{eq:in-A-lambda}
			\gamma_\alpha((1-\lambda)t_0+\lambda t_1)\in A_\lambda.
		\end{equation}

		Moreover, the restriction of $\gamma_\alpha$ to $[t_0,t_1]$ is a minimizing magnetic geodesic joining $A_0$ to $A_1$,
		whence
		\begin{equation}\label{eq: betas}
			\beta_{1-\lambda}^{k,n}(A_0,A_1)\le \beta_{1-\lambda}^{k,n}(t_1-t_0),
			\qquad\text{and}\qquad
			\beta_{\lambda}^{k,n}(A_0,A_1)\le \beta_{\lambda}^{k,n}(t_1-t_0).
		\end{equation}

		By~\eqref{eq:in-A-lambda},
		we have that $h_{\alpha,\lambda}((1-\lambda)t_0+\lambda t_1) \ge \rho_\alpha((1-\lambda)t_0+\lambda t_1)$.
		Hence, by~\eqref{eq:rho-cd} and~\eqref{eq: betas},
		\[
			h_{\alpha,\lambda}((1-\lambda)t_0+\lambda t_1)
			\ge
			\left(
				(1-\lambda)h_{\alpha,0}(t_0)^{\frac1{n-1}}
				+
				\lambda h_{\alpha,1}(t_1)^{\frac1{n-1}}
			\right)^{n-1}
		\]
		for every $t_0\le t_1$.

		Second, by~\eqref{eq:magnetic-needle-mass},
		\[
			\frac{\mu_{\alpha,t}(A_0)}{\Vol(A_0)} \le \frac{\mu_{\alpha,t}(A_1)}{\Vol(A_1)}
			\qquad\text{for every } t\in I_\alpha,
		\]
		and by~\eqref{eq:magnetic-needle-mass-0},
		\begin{equation}\label{eq:mu-alpha-mb}
			\frac{\mu_\alpha(A_0)}{\Vol(A_0)} = \frac{\mu_\alpha(A_1)}{\Vol(A_1)}
		\end{equation}
		for $\nu$-almost every $\alpha \in \sA$. If $\mu_\alpha(A_0)$ and $\mu_\alpha(A_1)$ are both positive then
		\[
			\frac{\mu_{\alpha,t}(A_0)}{\mu_\alpha(A_0)}
			\le
			\frac{\mu_{\alpha,t}(A_1)}{\mu_\alpha(A_1)}
			\qquad\text{for every } t\in I_\alpha.
		\]
		Otherwise $\mu_\alpha(A_0)=\mu_\alpha(A_1)=0$ and the same inequality holds trivially. 
		Since
		\[
			\mu_{\alpha,t}(A_i)=\int_{I_\alpha\cap[t,\infty)}\rho_\alpha\chi_{A_{\alpha,i}}\,dt
			\qquad\text{and}\qquad
			\mu_\alpha(A_i)=\int_{I_\alpha}\rho_\alpha\chi_{A_{\alpha,i}}\,dt,
		\]
		and since each $h_{\alpha,i}$ differs from $\rho_\alpha\chi_{A_{\alpha,i}}$ by a positive multiplicative constant,
		it follows that indeed
		\[
			\frac{\int_{I_\alpha\cap[t,\infty)}h_{\alpha,0}}{\int_{I_\alpha}h_{\alpha,0}}
			\le
			\frac{\int_{I_\alpha\cap[t,\infty)}h_{\alpha,1}}{\int_{I_\alpha}h_{\alpha,1}}
			\qquad\text{for every } t\in I_\alpha.
		\]

		Thus the hypotheses of the one-dimensional directed Borell--Brascamp--Lieb inequality~\eqref{eq:1DBBL} are satisfied by the functions $h_{\alpha,0},h_{\alpha,1},h_{\alpha,\lambda}$ on the interval $I_\alpha$, and we get:
		\[
			\int_{I_\alpha} h_{\alpha,\lambda}\,dt
			\ge
			\left(
				(1-\lambda)\left(\int_{I_\alpha} h_{\alpha,0}\,dt\right)^{1/n}
				+ 
				\lambda\left(\int_{I_\alpha} h_{\alpha,1}\,dt\right)^{1/n}
			\right)^n
		\]
		
		By the definitions of $h_{\alpha,0},h_{\alpha,1},h_{\alpha,\lambda}$, and $\beta_t^{k,n}$,
		this is equivalent to~\eqref{eq:needlewiseBM}.

		If $\mu_\alpha$ is a Dirac mass then~\eqref{eq:needlewiseBM} is immediate since $\tau_t^{k,n}(0)=t$ and $\mu_\alpha(A_0)=\mu_\alpha(A_1)$.
		Combining~\eqref{eq:needlewiseBM} with~\eqref{eq:mu-alpha-mb}, we obtain
		\[
			\mu_\alpha(A_\lambda)
			\ge
			\frac{\mu_\alpha(A_0)}{\Vol(A_0)}
			\left(
				\tau_{1-\lambda}^{k,n}(A_0,A_1)\Vol(A_0)^{1/n}
				+
				\tau_{\lambda}^{k,n}(A_0,A_1)\Vol(A_1)^{1/n}
			\right)^n
		\]
		for $\nu$-almost every $\alpha \in \sA$.
		Integrating with respect to $\nu$ and using~\eqref{eq:magnetic-disintegration}, we get
		\begin{align*}
			\Vol(A_\lambda)
			&=
			\int_{\sA}\mu_\alpha(A_\lambda)\,d\nu(\alpha) \\
			&\ge
			\left(
				\tau_{1-\lambda}^{k,n}(A_0,A_1)\Vol(A_0)^{1/n}
				+
				\tau_{\lambda}^{k,n}(A_0,A_1)\Vol(A_1)^{1/n}
			\right)^n
			\frac{1}{\Vol(A_0)}
			\int_{\sA}\mu_\alpha(A_0)\,d\nu(\alpha) \\
			&=
			\left(
				\tau_{1-\lambda}^{k,n}(A_0,A_1)\Vol(A_0)^{1/n}
				+
				\tau_{\lambda}^{k,n}(A_0,A_1)\Vol(A_1)^{1/n}
			\right)^n,
		\end{align*}
		which is~\eqref{eq: distortedBM}.
	\end{proof}

\section{From magnetic Brunn--Minkowski to magnetic Ricci curvature}\label{sec:bm-to-ric}

	In this section we prove the converse to Theorem~\ref{thm:distorted-bm}:
	\begin{theorem}\label{thm:converse}
		Let $k \in \RR$. Assume that for every pair $A_0,A_1 \subseteq M$ of Borel sets with positive volume,
		their magnetic Minkowski average $A_{1/2}$ satisfies
		\begin{equation}\label{eq:converse-dbm}
			\Vol(A_{1/2})^{1/n} \ge \tau_{1/2}^{k,n}(A_0,A_1)\left(\Vol(A_0)^{1/n} + \Vol(A_1)^{1/n}\right).
		\end{equation}
		Then $\Ric_\Omega \ge k$.
	\end{theorem}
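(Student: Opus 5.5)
We argue by contradiction. Suppose $\Ric_\Omega(v) < k$ for some $x\in M$ and $v\in S_xM$; we will build two small sets near $x$ whose midpoint set violates~\eqref{eq:converse-dbm}. The natural tool is Lemma~\ref{lem:tight} with $c_0=0$. It gives a unit vector field $V=\sigma^\sharp$ on a neighborhood $U$ of $x$ whose integral curves are magnetic geodesics. At $x$ it satisfies $\div V=0$ and $(V\div V)(x)=-\Ric_\Omega(v)$, and its first-order structure $\nabla V|_x$ is prescribed. Since $d\sigma=\Omega$ and $|\sigma|\equiv1$, the standard calibration argument ($\sigma$ is a local calibration for $\Len-\int\eta$, because $\Len[\gamma]-\int_\gamma\eta\ge\int_\gamma(\sigma-\eta)$ and $\sigma-\eta$ is closed, hence locally exact) shows that short integral curves of $V$ are minimizing magnetic geodesics, at least locally. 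Here we use Remark~\ref{rem:local-convexity} to keep everything inside a convex neighborhood where minimizers are unique. Let $\Phi_t$ denote the flow of $V$.

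The construction is as follows. Take a small ball $B=B_\epsilon(x)$, and set $A_0=\Phi_{-\ell}(B)$ and $A_1=\Phi_{\ell}(B)$, where $\ell$ is small but $\epsilon\ll\ell$. The aim is to show that $A_{1/2}$ is contained in a set only slightly larger than $B$. Along each flow line the Jacobian $J(t)=\det D\Phi_t$ satisfies $\dot J=(\div V)J$. Using $\div V(x)=0$ and $V\div V(x)=-\Ric_\Omega(v)$, we get, up to errors controlled by $\epsilon$,
\[
\Vol(\Phi_{\pm\ell}B)=\Vol(B)\Bigl(1-\tfrac12\Ric_\Omega(v)\ell^2+o(\ell^2)\Bigr).
\]
Meanwhile $\tau_{1/2}^{k,n}(\ell)=\tfrac12\bigl(1+\tfrac{k\ell^2}{4n}\cdots\bigr)$; more precisely, $\tau_{1/2}^{k,n}(\ell)^n = 2^{-n}\bigl(1+\tfrac{k}{(n-1)}\cdot\tfrac{3}{8}\cdot\tfrac{n-1}{n}\ell^2\cdot n+o(\ell^2)\bigr)$. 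We must match the constants carefully. The one-dimensional comparison says that equality in the model corresponds exactly to the Jacobian $J^{1/n}$ satisfying the $\CD(k,n)$ ODE with $\Sigma$ scalar. Because $\Sigma=0$ at $x$ (which is the point of $c_0=0$), the midpoint-to-endpoint volume ratio is governed purely by $\Ric_\Omega(v)$. Comparing second-order Taylor coefficients, if $\Ric_\Omega(v)<k$ then for small $\ell$ the right side of~\eqref{eq:converse-dbm} exceeds $\Vol(B)^{1/n}(1+O(\epsilon/\ell)+\ldots)$, provided we can show $\Vol(A_{1/2})\le\Vol(B)(1+o(\ell^2))$.

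The main obstacle is the upper bound on $A_{1/2}$. The set $A_{1/2}$ contains midpoints of \emph{all} minimizing magnetic geodesics from $A_0$ to $A_1$, not only the flow lines. For a point $y_0=\Phi_{-\ell}(p)$ and a point $y_1=\Phi_\ell(q)$ with $p,q\in B$, the midpoint of the connecting magnetic geodesic should be $\Phi_0$ of roughly $(p+q)/2$, up to a second-order error. We need this error to be $o(\epsilon\ell^2)$, or at least small enough that $A_{1/2}\subseteq B_{\epsilon(1+o(\ell^2))}(x)$ up to a shear. The plan is to use the prescribed $\nabla V|_x$ from Lemma~\ref{lem:tight}, namely $2\la\nabla_wV,w'\ra=\Omega(w,w')+\ldots$, which is exactly the linearization of the magnetic geodesic variation. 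This means that in exponential-type coordinates adapted to $V$, the midpoint map $(y_0,y_1)\mapsto$ midpoint agrees to first order with the flow-line averaging. To avoid a delicate ball estimate, it may be cleaner to choose $B$ to be a small cube in flow-box coordinates, and to use the symmetric structure together with the midpoint map being $C^2$ (by uniqueness in a convex neighborhood) to bound the volume of $A_{1/2}$ by that of a slightly enlarged $\Phi_0$-image. Then we let $\epsilon\to0$ first and $\ell\to0$ second. The Jacobian term $(V\div V)(x)$ controls the leading correction, which yields $\Ric_\Omega(v)\ge k$, a contradiction.
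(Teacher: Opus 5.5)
\textbf{There is a genuine gap.} It is the upper bound $\Vol(A_{1/2})\le\Vol(B)(1+o(\ell^2))$, and specifically your claim that the prescribed $\nabla V|_x$ makes the magnetic midpoint of $\Phi_{-\ell}(p)$ and $\Phi_\ell(q)$ agree with flow-line averaging up to $o(\epsilon\ell^2)$.

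$V$ only controls how neighbouring flow lines separate. The minimizing magnetic geodesic from $\Phi_{-\ell}(p)$ to $\Phi_\ell(q)$ with $q\neq p$ is not a flow line, and its midpoint is governed by a boundary value problem for the magnetic Jacobi equation. Solving it (essentially Lemma~\ref{lem:midpointlinearization} with the paper's $\eps$ equal to your $2\ell$, composed with $d\Phi_{\pm\ell}$) gives, to first order in $w_0,w_1\in T_xM$,
\[
\text{midpoint of }\Phi_{-\ell}(\exp_x w_0),\ \Phi_{\ell}(\exp_x w_1)\;\approx\;\exp_x\Bigl(\tfrac12(I+D)w_0+\tfrac12(I-D)w_1\Bigr),\qquad D=-\tfrac{\ell}{2}\,v\otimes(\Omega^\sharp v)^\flat+O(\ell^2).
\]
Geometrically, the connecting magnetic geodesic bows away from its chord by about $\tfrac12\ell^2|\iota_v\Omega|$. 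When a transverse displacement tilts the chord, the midpoint therefore slides along $v$ by about $\tfrac{\ell}{4}\la w_1-w_0,\Omega^\sharp v\ra$.

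The remaining freedom in Lemma~\ref{lem:tight} cannot fix this. Changing the free symmetric part of $\nabla V|_x$ on $\ker\sigma$ only adds to $D$ a term supported on $\ker\sigma$, which cannot cancel the off-diagonal $(v,\Omega^\sharp v)$ component.

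So, as $\epsilon\to0$, $A_{1/2}$ becomes the Minkowski average of the oppositely sheared, non-homothetic bodies $(I\pm D)B$. Euclidean Brunn--Minkowski is strict for them, with relative defect $\asymp\ell^2|\iota_v\Omega|^2$ for a ball; for a cube it is typically even of order $\ell$. This is exactly the order at which $\Ric_\Omega(v)$ has to be read off.

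A concrete check: take flat $\RR^2$ with $\Omega=c\,dx\wedge dy$, so $\Ric_\Omega\equiv c^2$, and use coordinates along $(\Omega^\sharp v/|c|,v)$.
\begin{itemize}
\item The pairs $w_0=(a,b)$, $w_1=(-a,b)$ with $a^2+b^2=\epsilon^2$ produce midpoints at distance $\approx\epsilon\sqrt{1+c^2\ell^2/4}$ from $x$.
\item One gets $\lim_{\epsilon\to0}\Vol(A_{1/2})/\Vol(B)=1+\tfrac{3}{32}c^2\ell^2+o(\ell^2)$.
\item Meanwhile $\Vol(A_i)=\Vol(B)(1-\tfrac12c^2\ell^2+o(\ell^2))$ and $\tau_{1/2}^{k,2}(2\ell)=\tfrac12(1+\tfrac{k\ell^2}{4}+o(\ell^2))$.
\end{itemize}
Hence your pair satisfies~\eqref{eq:converse-dbm} for every fixed $k<\tfrac{19}{16}c^2$ once $\ell$ is small and $\epsilon\ll\ell$, so it cannot detect $\Ric_\Omega<k$. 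In general, with balls, your construction only yields $\Ric_\Omega(v)\ge k-C_n|\iota_v\Omega|^2$.

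\textbf{What is missing} is the freedom to give $A_0$ and $A_1$ \emph{different} shapes, tuned to the linearized midpoint map. The paper computes, to second order in the length, linear maps $L_{\eps,0},L_{\eps,1}$ with $\mip_\eps(w_0,w_1)=\exp_{\gamma(\eps/2)}(\tfrac12L_{\eps,0}w_0+\tfrac12L_{\eps,1}w_1)+O(\eps^6)$. It then takes $E_0=L_{\eps,0}^{-1}(B_{\eps^3}(0))$ and $E_1=L_{\eps,1}^{-1}(B_{\theta\eps^3}(0))$. The linearized average is then the average of two concentric balls, an equality case of Euclidean Brunn--Minkowski.

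At first order, $E_1$ is obtained from $E_0$ by the shear $I-\tfrac{\eps}{2}\,v\otimes(\Omega^\sharp v)^\flat$, which is exactly what absorbs the effect above. $\Ric_\Omega(v)$ then enters only through $\det L_{\eps,i}$ and the flow Jacobian.

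The rest of your bookkeeping is sound. The flow Jacobian $1-\tfrac12\Ric_\Omega(v)\ell^2$ is right. Your displayed expansion of the distortion coefficient is garbled; the correct one is $\tau_{1/2}^{k,n}(2\ell)=\tfrac12(1+\tfrac{k\ell^2}{2n}+o(\ell^2))$. With these, your symmetric-ball argument does close when $\iota_v\Omega=0$, e.g.\ in the Riemannian case, but that is precisely where the magnetic difficulty is absent.
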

	The idea of the proof is to work in a small neighborhood in which the magnetic midpoint of every pair of points is unique and depends smoothly on the endpoints.
	We then compute the linearization of the magnetic midpoint map.
	We take the sets $A_0$ and $A_1$ to be exponentiated ellipsoids,
	chosen so that, after linearization,
	the magnetic Minkowski average of $A_0$ and $A_1$ becomes the Euclidean Minkowski average of two concentric balls,
	an equality case of the Euclidean Brunn--Minkowski inequality.

	We begin by stating the magnetic analogue of the Jacobi equation; see also~\cite{Gouda,Ada97,Gro}.
	Write $\Omega^\sharp$ for the $(1,1)$ tensor defined by
	\[
		g(\Omega^\sharp \,\cdot\,,\,\cdot\,) = \Omega.
	\] 
	Denote by $R$ the Riemann curvature tensor, and for $v \in TM$ set
	\[
		R_v^\Omega : = R(\,\cdot\,,v)v - \nabla\Omega^\sharp(\,\cdot\,,v).
	\]

	\begin{lemma}[Magnetic Jacobi equation]\label{lem:magnetic-jacobi}
		Let $\gamma_s:[0,\ell(s)] \to M$ be a family of (unit speed) magnetic geodesics depending smoothly on a parameter $s \in (-s_0,s_0)$.
		Then the vector field
		\[
			S(t) : = \left.\frac{\partial}{\partial s}\right|_{s=0}\gamma_s(t)
		\]
		along the magnetic geodesic $\gamma_0$ satisfies the linear ordinary differential equation
		\begin{equation}\label{eq:magnetic-jacobi}
			\nabla_{\dot\gamma}\nabla_{\dot\gamma}S - \Omega^\sharp\nabla_{\dot\gamma}S +  R_{\dot\gamma}^\Omega(S) = 0.
		\end{equation}
	\end{lemma}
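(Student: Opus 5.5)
We will derive \eqref{eq:magnetic-jacobi} from the standard first-variation computation, since it is the linearization of the magnetic geodesic equation~\eqref{eq:lorentz-force}. Write $\gamma(s,t) = \gamma_s(t)$, $T = \partial_t\gamma$ and $S = \partial_s\gamma$. Each $\gamma_s$ solves $\nabla_T T = \Omega^\sharp T$. The lengths $\ell(s)$ depend on $s$, but we only differentiate at fixed $t$ on the common domain, so this causes no difficulty. Because $[\partial_s,\partial_t]=0$ and $\nabla$ is torsion free, we have $\nabla_S T = \nabla_T S$.

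First, we differentiate the geodesic equation covariantly in $s$. On the left-hand side this gives
\[
\nabla_S\nabla_T T = \nabla_T\nabla_S T + R(S,T)T = \nabla_T\nabla_T S + R(S,T)T.
\]
On the right-hand side, the Leibniz rule for the $(1,1)$-tensor $\Omega^\sharp$ gives
\[
\nabla_S(\Omega^\sharp T) = (\nabla_S\Omega^\sharp)T + \Omega^\sharp\nabla_S T = (\nabla_S\Omega^\sharp)T + \Omega^\sharp\nabla_T S.
\]
Equating the two sides at $s=0$, where $T=\dot\gamma$, we obtain
\[
\nabla_{\dot\gamma}\nabla_{\dot\gamma}S - \Omega^\sharp\nabla_{\dot\gamma}S + R(S,\dot\gamma)\dot\gamma - (\nabla_S\Omega^\sharp)\dot\gamma = 0.
\]

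It remains to match these terms with $R^\Omega_{\dot\gamma}(S) = R(S,\dot\gamma)\dot\gamma - \nabla\Omega^\sharp(S,\dot\gamma)$. This is a matter of notational convention. If $\nabla\Omega^\sharp(X,Y)$ denotes $(\nabla_X\Omega^\sharp)Y$, the two expressions agree exactly. The sign convention for $R$ has to be the one for which $R(\cdot,v)v$ is the Jacobi operator, namely $R(X,Y)Z = \nabla_X\nabla_Y Z - \nabla_Y\nabla_X Z - \nabla_{[X,Y]}Z$; in that case the commutation step above is exactly $R(S,T)T$.

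The main care point is therefore fixing the sign and slot conventions for $R$ and $\nabla\Omega^\sharp$ consistently with the definition of $R^\Omega_v$. No unit-speed hypothesis is actually used in the computation. Since the magnetic flow preserves speed, the argument applies equally to the given unit-speed family.
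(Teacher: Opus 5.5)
Your proposal is correct and follows the same route as the paper. You differentiate $\nabla_TT=\Omega^\sharp T$ covariantly in $s$, commute $\nabla_S$ and $\nabla_T$ at the cost of the curvature term $R(S,T)T$, and apply the Leibniz rule together with $\nabla_ST=\nabla_TS$. Your conventions $R(X,Y)Z=\nabla_X\nabla_YZ-\nabla_Y\nabla_XZ-\nabla_{[X,Y]}Z$ and $\nabla\Omega^\sharp(X,Y)=(\nabla_X\Omega^\sharp)Y$ are exactly the ones implicit in the paper's computation.
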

	\begin{proof}
		Write $T(t,s) = \partial_t\gamma_s(t) = \dot\gamma_s(t)$.
		Since each $\gamma_s$ is a magnetic geodesic,
		\[
			\nabla_TT = \Omega^\sharp T.
		\]
		Hence
		\begin{align*}
			\nabla_T\nabla_TS 
			& = 
			\nabla_T\nabla_ST
			\\
			& = - R(S,T)T + \nabla_S\nabla_TT
			\\
			& = - R(S,T)T + \nabla_S(\Omega^\sharp T)
			\\
			& = - R(S,T)T + (\nabla_S\Omega^\sharp)T + \Omega^\sharp\nabla_ST
			\\
			& = - R(S,T)T + (\nabla\Omega^\sharp)(S,T) + \Omega^\sharp\nabla_TS,
		\end{align*}
		as desired.
	\end{proof}

	\begin{definition}
		A solution to~\eqref{eq:magnetic-jacobi} will be called a \emph{magnetic Jacobi field}. 
	\end{definition}

	We now proceed to prove Theorem~\ref{thm:converse}. Fix $x \in M$ and $v \in S_xM$.
	Our goal is to show that
	\[
		\Ric_\Omega(v)\ge k.
	\]
	By Lemma~\ref{lem:tight}, there exists an open set $U \ni x$ and a vector field $V$ on $U$ such that 
	\begin{align}
		&|V|\equiv 1, \quad \nabla_VV = \Omega^\sharp V, \quad V\vert_x = v, \quad (\div V)(x) = 0, \notag\\[6pt]
		&(V\div V)(x) = - \Ric_\Omega(v), \qquad \text{and} \notag\\[6pt]
		&\nabla V\vert_x = \frac12\,\Omega^\sharp\vert_x + \frac12\,(\Omega^\sharp v) \otimes v^\flat + \frac12\,v \otimes (\Omega^\sharp v)^\flat. \label{eq:nabla-V-at-x}
	\end{align}

	Let $\Phi_t$ denote the flow of the vector field $V$,
	and let $\gamma$ be the integral curve of $V$ satisfying $\gamma(0)=x$.
	Then $\gamma$ is a magnetic geodesic with 
	\[
		\dot\gamma(0)=V\vert_x=v.
	\]

	Let $0 < \eps < 1$, and let $E_0,E_1 \subseteq T_xM$ be bounded open neighborhoods of the origin whose precise choice (depending on $\eps$) will be given later;
	for now we postulate that their diameters satisfy
	\[
		\max\{\diam(E_0),\diam(E_1)\} = O(\eps^3).
	\]	
	Here and in the sequel, the implied constants are allowed to depend on anything but $\eps$. Let
	\[
		A_0 := \exp_x(E_0)
		\qquad \text{and} \qquad
		A_1 := \exp_{\gamma(\eps)}(d\Phi_\eps(E_1)).
	\]
	
	For $w_0,w_1 \in T_xM$, let $\mip_\eps(w_0,w_1)$ denote the midpoint of the minimizing magnetic geodesic joining $\exp_x(w_0)$ to $\exp_{\gamma(\eps)}(d\Phi_\eps w_1)$; see Figure~\ref{fig:m-eps}. By Remark~\ref{rem:local-convexity}, if the neighborhood $U$ is chosen sufficiently small then every pair of points in $U$ is joined by a unique minimizing magnetic geodesic, which is contained in $U$; in particular, for every $(w_0,w_1)\in E_0\times E_1$, the point $\mip_\eps(w_0,w_1)$ is uniquely defined and depends smoothly on $w_0,w_1$. Hence, by the definition of the magnetic Minkowski average,
	\[
		A_{1/2} = \mip_\eps(E_0 \times E_1).
	\]

	\begin{figure}
		\centering
		\begin{tikzpicture}[>=stealth, scale=0.95]

		% tracked points
		\coordinate (A0dot) at (0.18,0.28);
		\coordinate (A1dot) at (6.30,-0.07);
		\coordinate (w0dot) at (0.26,3.94);
		\coordinate (w1dot) at (0.22,3.32);
		\coordinate (dPhiw1dot) at (6.30,3.53);

		% --- colored sets drawn first so gamma overlays them ---
		\begin{scope}[shift={(0,0)}, rotate=45]
		\draw[figblue, thick, fill=figblue, fill opacity=0.18] (0,0) ellipse (0.50 and 0.34);
		\end{scope}
		\fill[figblue!80] (A0dot) circle (2.2pt);
		\node[figblue!90, font=\small] at (-0.78,0) {$A_0$};

		\begin{scope}[shift={(6,0)}, rotate=10]
		\draw[figorange!90, thick, fill=figorange, fill opacity=0.18] (0,0) ellipse (0.50 and 0.34);
		\end{scope}
		\fill[figorange!80] (A1dot) circle (2.2pt);
		\node[figorange!90, font=\small] at (6.8,0) {$A_1$};

		\draw[figgreen!90, thick, fill=figgreen!25] (3,0.07) ellipse (0.50 and 0.34);
		\node[figgreen!90, above=2pt, font=\small] at (3,-1) {$A_{1/2}$};

		% --- reference geodesic gamma, drawn on top of the sets ---
		\fill (0,0) circle (1.7pt);
		\node[below left=-2pt, font=\small] at (0,0) {$x$};

		% --- dashed sample magnetic geodesic ---
		% control points chosen so midpoint (3.0, 0.35) lies inside A_{1/2}
		\draw[dashed, black!55, thick]
		(A0dot) .. controls (1.5,0.40) and (4.5,0.40) .. (A1dot);
		\fill (3.00,0.33) circle (1.8pt);
		\node[above, font=\scriptsize] at (3.02,0.35) {$\mip_\eps(w_0,w_1)$};

		% === T_xM box (E_0 and E_1 both centered at the origin) ===
		\begin{scope}[shift={(0,3.60)}]
		\draw[gray!55, thick] (-1.10,-1.10) rectangle (1.10,1.10);
		\node[above, font=\small] at (0,1.10) {$T_xM$};
		\fill (0,0) circle (1.5pt);
		% E_0: origin-centered, 45 deg tilt, blue
		\begin{scope}[rotate=45]
		\draw[figblue, thick, fill=figblue, fill opacity=0.25] (0,0) ellipse (0.50 and 0.28);
		\end{scope}
		\node[figblue!90, font=\scriptsize] at (-.5,-.5) {$E_0$};
		\node[figblue!80, right=1pt, font=\scriptsize] at (0.25,0.47) {$w_0$};
		% E_1: origin-centered, -45 deg tilt, orange
		\begin{scope}[rotate=-45]
		\draw[figorange!90, thick, fill=figorange, fill opacity=0.25] (0,0) ellipse (0.50 and 0.28);
		\end{scope}
		\fill[figblue!80] (0.23,0.28) circle (1.6pt);
		\fill[figorange!80] (0.22,-0.28) circle (1.6pt);
		\node[figorange!90, font=\scriptsize] at (-.5,.6) {$E_1$};
		\node[figorange!80, right=1pt, font=\scriptsize] at (0.29,-0.25) {$w_1$};
		\end{scope}

		% --- exp_x arrow: from T_xM box down to A_0 ---
		\draw[->, black, thick] (0,2.50) -- (0,0.50);
		\node[left, black, font=\small] at (-0.05,1.50) {$\exp_x$};

		% === T_{gamma(eps)}M box ===
		\begin{scope}[shift={(6,3.60)}]
		\draw[gray!55, thick] (-1.10,-1.10) rectangle (1.10,1.10);
		\node[above, font=\small] at (0,1.10) {$T_{\gamma(\eps)}M$};
		\fill (0,0) circle (1.5pt);
		% d Phi_eps(E_1): origin-centered, slightly rotated
		\begin{scope}[rotate=10]
		\draw[figorange!90, thick, fill=figorange, fill opacity=0.18] (0,0) ellipse (0.48 and 0.30);
		\end{scope}
		\fill[figorange!80] (0.30,-0.07) circle (1.6pt);
		\end{scope}

		% --- dPhi_eps arrow: from T_xM to T_{gamma(eps)}M ---
		\draw[->, thick] (1.10,3.60) -- (4.90,3.60);
		\node[above, font=\small] at (3.05,3.66) {$d\Phi_\eps$};

		% --- exp_{gamma(eps)} arrow: from T_{gamma(eps)}M box down to A_1 ---
		\draw[->, black, thick] (6,2.50) -- (6,0.50);
		\node[right, black, font=\small] at (6.05,1.50) {$\exp_{\gamma(\eps)}$};
		\end{tikzpicture}
		\caption{The sets $A_0$ and $A_1$ and the map $\mip_\eps$. }
		\label{fig:m-eps}
	\end{figure}

	We now linearize the map $\mip_\eps$ using the magnetic Jacobi equation.
	\begin{lemma}\label{lem:midpointlinearization}
		For every $(w_0,w_1)\in E_0\times E_1$, we have that
		\begin{equation}\label{eq: midpoint-linearization}
			\mip_\eps(w_0,w_1)
			=
			\exp_{\gamma(\eps/2)}\left(\frac12\, L_{\eps,0}w_0
			+
			\frac12\, L_{\eps,1}w_1
			\right) + O(\eps^6),
		\end{equation}
		where
		\[
			L_{\eps,0},L_{\eps,1}:T_xM\to T_{\gamma(\eps/2)}M
		\]
		are linear maps satisfying
		\begin{align}
			\label{eq: detL0}
			\det(L_{\eps,0})
			&=
			1+\frac{\eps^2}{8} \cdot \Ric_\Omega(v)+O(\eps^3) \quad \text{and}
			\\
			\label{eq: detL1}
			\det(L_{\eps,1})
			&=
			1-\frac{3\eps^2}{8} \cdot \Ric_\Omega(v)+O(\eps^3).
		\end{align}
	\end{lemma}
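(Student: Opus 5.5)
The plan is to reduce the midpoint map to a boundary-value problem for the magnetic Jacobi equation~\eqref{eq:magnetic-jacobi} along the reference magnetic geodesic $\gamma|_{[0,\eps]}$. Then I would compute the two linear maps by Taylor-expanding in $\eps$, and use the flow $\Phi_t$ of the field $V$ from Lemma~\ref{lem:tight} both as a reference frame and as a source of explicit Jacobi fields.

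\emph{Step 1: linearization and error term.} By Remark~\ref{rem:local-convexity}, the map $(w_0,w_1)\mapsto\mip_\eps(w_0,w_1)$ is smooth near $(0,0)$, and $\mip_\eps(0,0)=\gamma(\eps/2)$. The second $V$-flow line through $\exp_x(0)=x$ is $\gamma$ itself, which has length exactly $\eps$. Write $\mip_\eps(w_0,w_1)=\exp_{\gamma(\eps/2)}(D_0w_0+D_1w_1)+Q_\eps(w_0,w_1)$, where $D_i$ are the partial differentials at the origin. Since $|w_i|=O(\eps^3)$, the quadratic remainder $Q_\eps$ is $O(\eps^6)$, provided the second derivatives of $\mip_\eps$ stay bounded as $\eps\to0$. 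To justify this I would rescale: write the two endpoints as $\gamma(0)$ and $\gamma(\eps)$ perturbed, and note that the boundary-value problem for the magnetic geodesic is a smooth, nondegenerate problem uniformly in $\eps$. One way is to rescale time by $\eps$, which turns it into a regular perturbation of the straight-line problem. This gives~\eqref{eq: midpoint-linearization} with $L_{\eps,i}:=2D_i$.

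To identify $D_i$, take a one-parameter family of minimizing magnetic geodesics $\gamma_s:[0,\ell(s)]\to M$ with the prescribed endpoints. Reparametrize $\gamma_s$ proportionally to $[0,\eps]$ and set $S=\partial_s\gamma_s|_{s=0}$. Then $S$ solves the magnetic Jacobi equation up to a term coming from the varying length; the midpoint derivative is $S(\eps/2)$ plus the tangential correction $\tfrac12\ell'(0)\dot\gamma(\eps/2)$ coming from this reparametrization. I would compute $\ell'(0)$ from the first variation of unit-speed magnetic geodesics, using $\nabla_{\dot\gamma}\dot\gamma=\Omega^\sharp\dot\gamma\perp\dot\gamma$.

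\emph{Step 2: the sum $L_{\eps,0}+L_{\eps,1}$.} For $w\in T_xM$, the curve $t\mapsto\Phi_t(\exp_x(sw))$ is a unit-speed magnetic geodesic of length exactly $\eps$ from $\exp_x(sw)$ to $\Phi_\eps(\exp_x(sw))$. Its midpoint is $\Phi_{\eps/2}(\exp_x(sw))$, and $\Phi_\eps(\exp_x(sw))=\exp_{\gamma(\eps)}(d\Phi_\eps(sw))+O(s^2)$. For $\eps$ small these flow lines are the minimizing ones, by uniqueness in $U$. Differentiating in $s$ gives $D_0w+D_1w=d\Phi_{\eps/2}w$, so
\[
	\tfrac12\,(L_{\eps,0}+L_{\eps,1})=d\Phi_{\eps/2}.
\]
Liouville's formula together with $(\div V)(x)=0$ and $(V\div V)(x)=-\Ric_\Omega(v)$ then yields
\[
	\det d\Phi_{\eps/2}=\exp\Big(\int_0^{\eps/2}\div V(\gamma(t))\,dt\Big)=1-\tfrac{\eps^2}{8}\Ric_\Omega(v)+O(\eps^3).
\]
This reduces the lemma to computing just one of the two maps, say $L_{\eps,0}$. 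The claimed formulas are then consistent: writing $L_{\eps,0}=P(I+a)$ and $L_{\eps,1}=P(I+b)$, with $P$ the parallel transport along $\gamma$ (up to the corrections below), one needs $\tr(a+b)/2=-\tfrac{\eps^2}{8}\Ric_\Omega$ together with $\tr a=\tfrac{\eps^2}{8}\Ric_\Omega$.

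\emph{Step 3: the map $L_{\eps,0}$, which is the main obstacle.} Here $S(0)=w$ and $S(\eps)=0$. In a parallel frame along $\gamma$, write the Jacobi equation as $S''=\Omega^\sharp S'-R^\Omega_{\dot\gamma}S$, and expand $S(t)=w+tS'(0)+\tfrac{t^2}2S''(0)+\tfrac{t^3}6S'''(0)+\dots$. Solving $S(\eps)=0$ for $S'(0)$ order by order gives $S'(0)=-w/\eps+\tfrac12\Omega^\sharp w+\eps(\cdots)$, and hence $2S(\eps/2)=(I+\eps B_1+\eps^2B_2)w+O(\eps^3)$. The determinant needs $\tr B_1$ and $\tr B_2-\tfrac12\tr(B_1^2)+\tfrac12(\tr B_1)^2$. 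The antisymmetric $\Omega^\sharp$ contributes nothing to $\tr B_1$, but contributes at order $\eps^2$ through $(\Omega^\sharp)^2$ and $\nabla\Omega^\sharp$. Taking the trace of $R^\Omega_v$ produces $\Ric(v)-\delta\Omega(v)$, while the $|\iota_v\Omega|^2$ and $|\Omega|^2$ pieces of $\Ric_\Omega$ come from $\tr(\Omega^\sharp)^2$ and the tangential length correction.

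The delicate bookkeeping is making all these terms assemble exactly into $\tfrac{\eps^2}{8}\Ric_\Omega(v)$. To keep it under control, I would avoid expanding blindly and instead express $S$ relative to the known Jacobi fields $d\Phi_t w$. Their initial derivative is given by~\eqref{eq:nabla-V-at-x}, so the difference between $S$ and $(1-t/\eps)\,d\Phi_t w$ solves an inhomogeneous equation whose source is explicit at order $\eps$. Computing $\tr$ of that correction and combining with Step~2 then yields~\eqref{eq: detL0}, and~\eqref{eq: detL1} follows by subtraction.
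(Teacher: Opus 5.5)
Your Step~2 is correct, and the paper does not use it. With $w_0=w_1=w$, the endpoints $\Phi_\eps(\exp_x(sw))$ and $\exp_{\gamma(\eps)}(s\,d\Phi_\eps w)$ agree to first order in $s$, so $\tfrac12(L_{\eps,0}+L_{\eps,1})=d\Phi_{\eps/2}$ holds exactly. Liouville then gives $\det d\Phi_{\eps/2}=1-\tfrac{\eps^2}{8}\Ric_\Omega(v)+O(\eps^3)$. I checked that this agrees with the paper's explicit $\widetilde L_{\eps,0},\widetilde L_{\eps,1}$. You do need a reason why the flow lines of $V$ are the minimizers; the cleanest is the calibration $\Len[c]-\int_c\eta\ge\int_c(\sigma-\eta)$, with $\sigma-\eta$ exact on a small ball. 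The paper has no such shortcut. It Taylor-expands the unit-speed Jacobi field with $S(0)=w_0$ and $S(\eps)=d\Phi_\eps w_1-\ell'(0)\dot\gamma(\eps)$, fixes $\ell'(0)$ from $\la\dot S,\dot\gamma\ra=0$, and reads off both maps.

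The gap is Step~3. It carries all the content of the lemma, you leave it as a plan, and what you do write there gives the wrong answer.

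First, you mix the two parametrizations. For the unit-speed field the Jacobi equation is homogeneous, but $S(\eps)=-\ell'(0)\dot\gamma(\eps)$ with $\ell'(0)=-\la w,v\ra+O(\eps|w|)$. For the proportionally parametrized field $S(\eps)=0$, but the equation acquires the source $\eps^{-1}\ell'(0)\,\Omega^\sharp\dot\gamma$. Your formula $S'(0)=-w/\eps+\tfrac12\Omega^\sharp w+\dots$ solves neither problem. For the one with $S(\eps)=0$, the correct value has an extra term $\tfrac12\la w,v\ra\,\Omega^\sharp v$. Compared with the homogeneous problem, the length variation therefore changes the midpoint differential by the normal vector $\tfrac{\eps}{8}\la w,v\ra\,\Omega^\sharp v+O(\eps^2|w|)$, not by the tangential shift $\tfrac12\ell'(0)\dot\gamma(\eps/2)$. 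The correct first-order coefficient is $B_1=\tfrac14(\Omega^\sharp+\Omega^\sharp v\otimes v^\flat)$. Without the second term, $\det(L_{\eps,0})$ comes out short by exactly $\tfrac{\eps^2}{16}|\iota_v\Omega|^2$.

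Second, the targets you set in Step~2 are false. One has $\tr a=\tfrac{\eps^2}{8}\bigl(\Ric(v)-(\delta\Omega)(v)\bigr)+O(\eps^3)$, not $\tfrac{\eps^2}{8}\Ric_\Omega(v)$. The terms $\tfrac14|\Omega|^2+\tfrac12|\iota_v\Omega|^2$ enter only through $-\tfrac12\tr(a_1^2)$, where $a_1=\eps B_1$.

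Third, determinants do not subtract, so obtaining $\det(L_{\eps,1})$ ``by subtraction'' needs an argument. It does work here. Write $d\Phi_{\eps/2}^{-1}L_{\eps,0}=I+Y$; then $Y=-\tfrac{\eps}{4}\,v\otimes(\Omega^\sharp v)^\flat+O(\eps^2)$, and its first-order part is nilpotent. Hence $\det(I\pm Y)=1\pm\tr Y+O(\eps^3)$, and since $L_{\eps,1}=d\Phi_{\eps/2}(I-Y)$, both determinants follow from $\tr Y=\tfrac{\eps^2}{4}\Ric_\Omega(v)+O(\eps^3)$. But you must establish this structure. Proving that value of $\tr Y$ is exactly the second-order computation you defer. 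It requires the $O(\eps)$ corrections to $\ell'(0)$, to $J=d\Phi_t w$ and to $\dot\gamma$, which is the same bookkeeping the paper carries out.
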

	\begin{proof}

	Let $w_i \in E_i, \, \, i=0,1$. For $s \in (-1,1)$,
	let $\gamma_s:[0,\ell(s)]\to M$ be a unit-speed magnetic geodesic satisfying
	\begin{equation}\label{eq:gamma-boundary}
		\gamma_s(0)=\exp_x(s\cdot w_0)
		\qquad\text{and}\qquad
		\gamma_s(\ell(s))=\exp_{\gamma(\eps)}\left(s\cdot d\Phi_\eps w_1\right),
	\end{equation}
	and in particular $\gamma_0=\gamma$.
	Note that if $U$ is a sufficiently small neighborhood of $x$ and $\eps$ is sufficiently small,
	then the curve $\gamma_s$ is uniquely defined for all $s \in (-1,1)$ and depends smoothly on $s$,
	as does its length $\ell(s)$. By Lemma~\ref{lem:magnetic-jacobi}, the vector field
	\[
		S(t):=\left.\frac{\partial}{\partial s}\right|_{s=0}\gamma_s(t)
	\]
	along $\gamma$ is a magnetic Jacobi field.

	The definition of $\mip_\eps$ implies that $\gamma_s(\ell(s)/2) = \mip_\eps(s\cdot w_0,s\cdot w_1)$, whence
	\begin{align*}
		d\mip_\eps\vert_{(0,0)}(w_0,w_1)
		& =
		\left.\frac{d}{ds}\right|_{s=0}\left(\gamma_s(\ell(s)/2)\right)
		\\
		& = S(\eps/2) + \frac{\ell'(0)}{2}\cdot\dot\gamma(\eps/2),
	\end{align*}
	under the natural identification $T_0T_{\gamma(\eps/2)}M \cong T_{\gamma(\eps/2)}M$.
	Since $\diam(E_i) = O(\eps^3)$, we have $|w_i| = O(\eps^3)$, and so 
	\begin{equation}\label{eq: midpoint-expansion}
		\mip_\eps(w_0,w_1) = \exp_{\gamma(\eps/2)}\left(S(\eps/2) + \frac{\ell'(0)}{2}\dot\gamma(\eps/2)\right) + O(\eps^6).
	\end{equation}
	
	We now compute second order expansions of $S(\eps/2)$ and $\ell'(0)$ for small $\eps$. Write 
	\[
		T : = \dot\gamma.
	\]

	Since $\gamma$ is a magnetic geodesic and $S$ is a magnetic Jacobi field, $T$ and $S$ verify
	\begin{align}
		& \dot T = \Omega^\sharp T, & T(0) & = v, & \dot  T(0) & = \Omega^\sharp v, \label{eq: T-equation}
		\\
		& \ddot S - \Omega^\sharp\dot S+ R_{T}^\Omega(S)=0, & S(0) & = w_0, & 
		S(\eps) & = d\Phi_\eps w_1-\ell'(0)T(\eps), \label{eq: S-equation}
	\end{align}
	where dots denote covariant differentiation along $\gamma$.
	Here the boundary conditions on $S$ follow from~\eqref{eq:gamma-boundary} by differentiating with respect to $s$ at $s=0$ and $t=0,\eps$.
	Differentiating~\eqref{eq: S-equation} at $t=0$ we get
	\begin{equation}\label{eq:dddotS}
		\dddot S(0)
		=
		\left(\nabla_v\Omega^\sharp+(\Omega^\sharp)^2 - R_v^\Omega\right)\dot S(0)
		+
		O(|w_0|),
	\end{equation}
	where we have used $|S(0)| = |w_0| = O(\eps^3)$. 
	
	Let $P_t$ denote parallel translation along $\gamma$. By~\eqref{eq: S-equation} and~\eqref{eq:dddotS},
	\begin{equation}
	\begin{split}\label{eq:S-parallel-expansion}
		P_{-t}S(t)
		& =
		w_0
		+
		t\cdot \dot S(0)
		+
		\frac{t^2}{2}\cdot \left(\Omega^\sharp \dot S(0) - R_v^\Omega w_0\right)
		\\
		& \qquad
		+
		\frac{t^3}{6}\cdot \left(\nabla_v\Omega^\sharp + (\Omega^\sharp)^2 - R_v^\Omega\right)\dot S(0)
		+
		O(\eps^3|w_0|)+O(\eps^4(|w_0|+|\dot S(0)|)),
	\end{split}
	\end{equation}
	since the fourth derivative of $P_{-t}S(t)$ is uniformly bounded on $[0,\eps]$ by $C(|S(0)|+|\dot S(0)|)$.
	Moreover,
	\[
		|\dot S(0)|=O\left(\eps^{-1}|w_0-P_{-\eps}S(\eps)|\right)=O\left(\eps^{-1}(|w_0|+|S(\eps)|)\right).
	\]
	Setting $t = \eps$ and solving for $\dot S(0)$, we get
	\begin{equation}\label{eq:Sdot0-expansion}
	\begin{split}
		\dot S(0)
		& =
		\frac{1}{\eps}
		\cdot 
		\left(
			\mathrm{I}
			-
			\frac{\eps}{2}\cdot \Omega^\sharp
			-
			\frac{\eps^2}{6}\left(
				\nabla_v\Omega^\sharp
				-
				\frac12(\Omega^\sharp)^2
				-
				R_v^\Omega
			\right)
		\right)\left(u-w_0\right)
		\\
		&\qquad\qquad\qquad
		+
		\frac{\eps}{2}\cdot R_v^\Omega w_0
		+
		O(\eps^2(|w_0|+|u|)),
	\end{split}
	\end{equation}
	where we have set 
	\begin{equation}\label{eq: u-definition}
		u : = P_{-\eps}S(\eps).
	\end{equation}

	Substituting this into~\eqref{eq:S-parallel-expansion} with $t = \eps/2$ we find
	\begin{equation}\label{eq:S-half-expansion}
	\begin{aligned}
		P_{-\eps/2}S(\eps/2)
		& =
		\frac12\left(w_0+u\right)
		+
		\frac{\eps}{8}\cdot \Omega^\sharp\left(w_0-u\right)
		\\
		& \qquad +
		\frac{\eps^2}{16}\Big(
			(\nabla_v\Omega^\sharp)\left(w_0-u\right)
			+
			R_v^\Omega\left(w_0+u\right)
		\Big)
		+
		O(\eps^3(|w_0|+|u|)).
	\end{aligned}
	\end{equation}

	We now derive an asymptotic formula for the vector $u$.
	Since the flow $\Phi_t$ of the vector field $V$ is by unit-speed magnetic geodesics, the vector field
	\[
		J(t):=d\Phi_t w_1
	\]
	along $\gamma$ is a magnetic Jacobi field, and so it satisfies
	\begin{equation}\label{eq: J-initial-data}
		\ddot J - \Omega^\sharp \dot J + R_T^\Omega(J) = 0, \qquad J(0)=w_1,
		\quad
		\dot J(0)=\nabla_{w_1}V.
	\end{equation}

	Moreover, by~\eqref{eq: u-definition} and~\eqref{eq: S-equation},
	\begin{equation}\label{eq: u-from-JT}
		u = P_{-\eps}J(\eps)-\ell'(0)P_{-\eps}T(\eps).
	\end{equation}

	By~\eqref{eq: J-initial-data}, 
	\begin{equation*}
		P_{-\eps}J(\eps)
		=
		w_1
		+
		\eps\cdot\nabla_{w_1}V
		+
		\frac{\eps^2}{2}\cdot\left(
			\Omega^\sharp\nabla_{w_1}V
			-
			R_v^\Omega(w_1)
		\right)
		+
		O(\eps^3|w_1|),
	\end{equation*}
	and by~\eqref{eq: T-equation},
	\[
		P_{-\eps}T(\eps)
		=
		v
		+
		\eps\cdot\Omega^\sharp v
		+
		\frac{\eps^2}{2}\cdot\left(
			(\nabla_v\Omega^\sharp)v+(\Omega^\sharp)^2v
		\right)
		+
		O(\eps^3).
	\]
	Therefore, by~\eqref{eq: u-from-JT},
	\begin{equation}\label{eq: u-eps-expansion}
	\begin{split}
		u
		& =
		w_1-\ell'(0)v
		+
		\eps\bigl(\nabla_{w_1}V-\ell'(0)\Omega^\sharp v\bigr)
		\\
		& \qquad
		+
		\frac{\eps^2}{2}\Bigl(
			\Omega^\sharp\nabla_{w_1}V
			-
			R_v^\Omega(w_1)
			-
			\ell'(0)\bigl((\nabla_v\Omega^\sharp)v+(\Omega^\sharp)^2v\bigr)
		\Bigr)
		+
		O(\eps^3(|w_1|+|\ell'(0)|)).
	\end{split}
	\end{equation}

	Next, we derive an asymptotic formula for $\ell'(0)$. 	Since each curve $\gamma_s$ has unit speed,
	\begin{equation}\label{eq:ST-orthogonality}
		\la \dot S,T\ra = \left<\nabla_TS,T\right> = \left<\nabla_ST,T\right> = 0
		\qquad \text{ and similarly } \qquad
		\la \dot J,T\ra=0.
	\end{equation}

	Taking the inner product of~\eqref{eq:Sdot0-expansion} with $v$ and using~\eqref{eq:ST-orthogonality},
	skew-adjointness of $\Omega^\sharp$, and the identity
	$\la R_v^\Omega \,\cdot\,,v\ra=0$, we get
	\[
		0
		=
		\la u-w_0,v\ra
		+
		\frac{\eps}{2}\la u-w_0,\Omega^\sharp v\ra
		+
		O(\eps^2(|w_0|+|u|)).
	\]

	Substituting~\eqref{eq: u-eps-expansion}, and using $\la \nabla_{w_1}V,v\ra=0$,
	$\la \Omega^\sharp v,v\ra=0$ and $|v|=1$, we obtain
	\begin{equation}\label{eq: ellprime-first}
		\ell'(0)
		=
		\la w_1-w_0,v\ra
		+
		\frac{\eps}{2}\la w_1-w_0,\Omega^\sharp v\ra
		+
		O(\eps^2(|w_0|+|w_1|)).
	\end{equation}

	By~\eqref{eq: T-equation},
	\begin{equation}\label{eq: T-half-expansion}
		P_{-\eps/2}T(\eps/2)
		=
		v
		+
		\frac{\eps}{2}\Omega^\sharp v
		+
		\frac{\eps^2}{8}\left(
			(\nabla_v\Omega^\sharp)v+(\Omega^\sharp)^2v
		\right)
		+
		O(\eps^3).
	\end{equation}
	Thus,
	\begin{align*}
		P_{-\eps/2}\bigg(S(\eps/2)& +\frac{\ell'(0)}{2}T(\eps/2)\bigg)
		\\
		\qquad\stackrel{\eqref{eq:S-half-expansion},\,\eqref{eq: T-half-expansion}}{=}
		{}&
		\frac12\bigl(w_0+u+\ell'(0)v\bigr)
		+
		\frac{\eps}{8}\Bigl(
			\Omega^\sharp(w_0-u)
			+
			2\ell'(0)\Omega^\sharp v
		\Bigr)
		\\
		\qquad&
		+
		\frac{\eps^2}{16}\Bigl(
			(\nabla_v\Omega^\sharp)(w_0-u)
			+
			R_v^\Omega(w_0+u)
			+
			\ell'(0)\bigl((\nabla_v\Omega^\sharp)v+(\Omega^\sharp)^2v\bigr)
		\Bigr)
		\\
		\qquad&
		+
		O(\eps^3(|w_0|+|u|+|\ell'(0)|))
		\\
		\qquad
		\stackrel{\eqref{eq: u-eps-expansion}}{=}
		{}&
		\frac12(w_0+w_1)
		+
		\frac{\eps}{8}\Bigl(
			4\nabla_{w_1}V
			+
			\Omega^\sharp(w_0-w_1)
			-
			\ell'(0)\Omega^\sharp v
		\Bigr)
		\\
		\qquad
		&
		+
		\frac{\eps^2}{16}\Biggl(
			2\Omega^\sharp\nabla_{w_1}V
			+
			(\nabla_v\Omega^\sharp)(w_0-w_1)
			+
			R_v^\Omega(w_0-3w_1)
			\\
			& \qquad
			-
			\ell'(0)\Bigl(
				2(\nabla_v\Omega^\sharp)v
				+
				(\Omega^\sharp)^2v
				+
				R_v^\Omega v
			\Bigr)
		\Biggr)
		+
		O(\eps^3(|w_0|+|w_1|)).
	\end{align*}

	Using~\eqref{eq:nabla-V-at-x},~\eqref{eq: ellprime-first} and
	$R_v^\Omega v=-(\nabla_v\Omega^\sharp)v$, we obtain
	\begin{equation}\label{eq:midpoint-differential}
		P_{-\eps/2}\left(S(\eps/2) + \frac{\ell'(0)}{2} \cdot T(\eps/2)\right)
		=
		\frac12\widetilde L_{\eps,0}w_0
		+
		\frac12\widetilde L_{\eps,1}w_1,
	\end{equation}
	where 
	\begin{align*}
		\widetilde L_{\eps,0}
		& =
		I
		+
		\frac{\eps}{4}\left(
			\Omega^\sharp
			+
			\Omega^\sharp v\otimes v^\flat
		\right)
		\\
		& \qquad
		+
		\frac{\eps^2}{8}\left(
			R_v^\Omega
			+
			\nabla_v\Omega^\sharp
			+
			(\nabla_v\Omega^\sharp v)\otimes v^\flat
			+
			(\Omega^\sharp)^2v\otimes v^\flat
			+
			\Omega^\sharp v\otimes(\Omega^\sharp v)^\flat
		\right)
		+
		O(\eps^3),
		\\
		\widetilde L_{\eps,1}
		& =
		I
		+
		\frac{\eps}{4}\left(
			\Omega^\sharp
			+
			\Omega^\sharp v\otimes v^\flat
			+
			2\,v\otimes(\Omega^\sharp v)^\flat
		\right)
		\\
		& \qquad
		+
		\frac{\eps^2}{8}\left(
			-3\,R_v^\Omega
			-
			\nabla_v\Omega^\sharp
			-
			(\nabla_v\Omega^\sharp v)\otimes v^\flat
			+
			(\Omega^\sharp)^2
		\right)
		+
		O(\eps^3).
	\end{align*}

	A direct computation using $\Ric_\Omega(v) =  \tr(R_v^\Omega) + \tfrac12|\iota_v\Omega|^2 + \tfrac14|\Omega|^2$ yields
	\begin{equation}\label{eq: det-tildeL}
		\det(\widetilde L_{\eps,0})
		=
		1+\frac{\eps^2}{8}\Ric_\Omega(v)+O(\eps^3)
		\quad \text{ and } \quad 
		\det(\widetilde L_{\eps,1})
		=
		1-\frac{3\eps^2}{8}\Ric_\Omega(v)+O(\eps^3).
	\end{equation}

	The lemma follows from~\eqref{eq: midpoint-expansion},~\eqref{eq:midpoint-differential} and~\eqref{eq: det-tildeL} once we set $L_{\eps,i}:=P_{\eps/2}\widetilde L_{\eps,i}$.
	\end{proof}

	We now choose the sets $E_0,E_1$ by requiring that they have equal volumes,
	and that their respective images under $L_{\eps,0}$ and $L_{\eps,1}$ be balls centered at the origin in $T_{\gamma(\eps/2)}M$,
	with the ball $L_{\eps,0}(E_0)$ having radius $\eps^3$. See Figure~\ref{fig:E0E1}. In more detail: set
	\begin{equation}\label{eq: Ei-definition}
		E_0 : = L_{\eps,0}^{-1}(B_{\eps^3}(0)) 
		\qquad\text{and}\qquad
		E_1 : = L_{\eps,1}^{-1}(B_{\theta\eps^3}(0)),
	\end{equation}
	where $B_r(0)$ denotes a ball of radius $r$ centered at the origin in $T_{\gamma(\eps/2)}M$, and where
	\[
		\theta := \left(\frac{\det(L_{\eps,1})}{\det(L_{\eps,0})}\right)^{1/n}.
	\]

	Recall that so far we only required $\max\{\diam(E_0),\diam(E_1)\}=O(\eps^3)$;
	our choice is indeed consistent with this requirement,
	since $L_{\eps,i}$ are $O(\eps)$-close to the identity and since,
	by~\eqref{eq: detL0} and~\eqref{eq: detL1}, 
	\begin{equation}\label{eq: theta-expansion}
		\theta
		=
		1
		-
		\frac{\eps^2}{2n}\Ric_\Omega(v)
		+
		O(\eps^3).
	\end{equation}

	Write
	\[
		\rv : =\Vol(E_0) = \Vol(E_1).
	\]

	\begin{figure}
		\centering
		\begin{tikzpicture}[>=stealth, scale=1.0]

		% === T_xM (left box, E_0 and E_1 both centered at origin) ===
		\draw[gray!55, thick] (-1.35,-1.35) rectangle (1.35,1.35);
		\node[above, font=\small] at (0,1.35) {$T_xM$};
		\fill (0,0) circle (1.5pt);
		% \node[below left, font=\scriptsize] at (-0.02,-0.02) {$0$};

		% E_0: origin-centered, 40 deg tilt, blue
		\begin{scope}[rotate=40]
		\draw[figblue, thick, fill=figblue, fill opacity=0.25] (0,0) ellipse (0.58 and 0.34);
		\end{scope}
		\node[figblue!90, font=\small] at (0.24,0.7) {$E_0$};

		% E_1: origin-centered, -30 deg tilt, orange
		\begin{scope}[rotate=-30]
		\draw[figorange!90, thick, fill=figorange, fill opacity=0.25] (0,0) ellipse (0.62 and 0.38);
		\end{scope}
		\node[figorange!90, font=\small] at (0.55,-0.72) {$E_1$};

		% === T_{gamma(eps/2)}M (right box) ===
		\begin{scope}[shift={(5.5,0)}]
		\draw[gray!55, thick] (-1.35,-1.35) rectangle (1.35,1.35);
		\node[above, font=\small] at (0,1.35) {$T_{\gamma(\eps/2)}M$};

		% B_{theta eps3} (orange, larger) drawn first
		\draw[figorange!90, thick, fill=figorange!15] (0,0) circle (0.90);

		% B_eps3 (blue, smaller) on top
		\draw[figblue, thick, fill=figblue!25] (0,0) circle (0.62);

		% Radius indicator for the inner (blue) circle
		\draw[figblue!90, thick, dash pattern=on 2pt off 1.5pt] (0,0) -- (110:0.62);
		\node[figblue!90, font=\tiny, below left] at (95:0.5) {$\eps^3$};
		
		\fill (0,0) circle (1.5pt);

		\end{scope}

		% === Arrows L_{eps,0} and L_{eps,1} ===
		\draw[->, figblue!70, thick] (1.35, 0.55) to[out=5,in=175] (4.15, 0.55);
		\node[above, figblue!90, font=\small] at (2.75,0.65) {$L_{\eps,0}$};

		\draw[->, figorange!70, thick] (1.35,-0.55) to[out=-5,in=-175] (4.15,-0.55);
		\node[below, figorange!90, font=\small] at (2.75,-0.65) {$L_{\eps,1}$};

		\end{tikzpicture}
		\caption{The choice of $E_0$ and $E_1$.}
		\label{fig:E0E1}
	\end{figure}

	Recall that $(\div V)(x) = 0$ and $(V\div V)(x) = -\Ric_\Omega(v)$,
	and that $A_0 = \exp_x(E_0)$ and $A_1 = \exp_{\gamma(\eps)}(d\Phi_\eps(E_1))$ where $\Phi_t$ is the flow of $V$.
	Since $\diam(E_i)=O(\eps^3)$,
	it follows by standard asymptotic expansions of the Jacobian determinants of the exponential map and of the flow $\Phi_t$ that
	\[
		\Vol(A_0) = \rv\left(1 + O(\eps^3)\right)
		\qquad\text{and}\qquad
		\Vol(A_1) =
		\rv\cdot\left(1 - \frac{\eps^2}{2}\cdot\Ric_\Omega(v) + O(\eps^3)\right).
	\]

	Hence, by our assumption that the Brunn--Minkowski inequality~\eqref{eq:converse-dbm} holds,
	\begin{equation}\label{eq:brunn-minkowski-application}
	\begin{split}
		\Vol(A_{1/2})^{1/n}
		& \ge
		\tau_{1/2}^{k,n}(A_0,A_1)\cdot\left(\Vol(A_0)^{1/n}+\Vol(A_1)^{1/n}\right)
		\\
		& =
		\left(\frac12 + \eps^2\cdot\frac{k}{16n} + O(\eps^3)\right)\cdot \rv^{1/n} \cdot \left(2 - \frac{\eps^2}{2n} \cdot \Ric_\Omega(v)+ O(\eps^3)\right)
		\\
		& = 
		\rv^{1/n} \cdot \left(1 + \frac{\eps^2}{8n} \cdot \left(k - 2\Ric_\Omega(v)\right) + O(\eps^3)\right),
	\end{split}
	\end{equation}
	where in the second passage we used that every minimizing magnetic geodesic joining $A_0$ to $A_1$ has length $\eps + O(\eps^2)$,
	and expanded the distortion coefficient $\tau_{1/2}^{k,n}$ near $0$.
	
	We claim that
	\begin{equation}\label{eq: VolA12}
		\Vol(A_{1/2})
		=
		\Vol\left(B_{\frac{1 + \theta}{2}\eps^3}(0)\right)\left(1+O(\eps^3)\right).
	\end{equation}

	Indeed, write $r:=\frac{1+\theta}{2} \cdot \eps^3$ and define
	\[
		F_\eps(y)
		:=
		\exp_{\gamma(\eps/2)}^{-1}\!\left(
			\mip_\eps\left(
				L_{\eps,0}^{-1}\left(\frac{2}{1+\theta}\cdot y\right),
				L_{\eps,1}^{-1}\left(\frac{2\theta}{1+\theta} \cdot y\right)
			\right)
		\right),
		\qquad y\in B_r(0).
	\]

	By Lemma~\ref{lem:midpointlinearization}, there exists a constant $C > 0$ such that 
	\[
		\left| F_\eps(y) - y \right| \le C\cdot \eps^6, \qquad y \in B_r(0).
	\]

	In particular, for every $z\in B_{r-C\eps^6}(0)$ and every $y\in \partial B_r(0)$,
	\[
		|F_\eps(y)-y|\le C\eps^6 < r-|z|\le |y-z|,
	\]
	so $F_\eps(y)\neq z$ on $\partial B_r(0)$.
	Therefore $F_\eps$ is homotopic to the identity through maps avoiding $z$ on $\partial B_r(0)$ (take the linear interpolation $(1-t)\,F_\eps + t\,\mathrm{Id}$), and thus the map $F_\eps:B_r(0)\to\mathbb{R}^n$ has degree $1$ at $z$. In particular, $z\in F_\eps(B_r(0))$. Together with~\eqref{eq: midpoint-linearization}, this implies that
	\[
		B_{r-C\eps^6}(0)\subseteq \exp_{\gamma(\eps/2)}^{-1}(A_{1/2})
		\subseteq B_{r+C\eps^6}(0).
	\]

	Since $\eps^6=O(r\eps^3)$, estimate~\eqref{eq: VolA12} follows.

	By~\eqref{eq: VolA12},~\eqref{eq: theta-expansion},~\eqref{eq: Ei-definition} and~\eqref{eq: detL0},
	\begin{align*}
		\Vol(A_{1/2})^{1/n}
		& = 
		\left(\frac{1+\theta}{2}\right) \cdot \Vol(B_{\eps^3}(0))^{1/n}
		\\
		& =
		\left(
			1
			-
			\frac{\eps^2}{4n}\Ric_\Omega(v)
			+
			O(\eps^3)
		\right)\cdot\Vol(B_{\eps^3}(0))^{1/n}
		\\
		& = 
		\left(
			1
			-
			\frac{\eps^2}{4n}\Ric_\Omega(v)
			+
			O(\eps^3)
		\right)\cdot\det(L_{\eps,0})^{1/n}\cdot \rv ^{1/n}
		\\
		& = 
		\left(
			1
			-
			\frac{\eps^2}{4n}\Ric_\Omega(v)
			+
			O(\eps^3)
		\right)\cdot\left(1+\frac{\eps^2}{8n}\Ric_\Omega(v)+O(\eps^3)\right)\cdot \rv^{1/n}
		\\
		& =
		\left(
			1
			-
			\frac{\eps^2}{8n}\Ric_\Omega(v)
			+
			O(\eps^3)
		\right)\cdot \rv^{1/n}.
	\end{align*}
	Comparing with~\eqref{eq:brunn-minkowski-application} we see that $\Ric_\Omega(v) \ge k$,
	and thus the proof is complete.
	
\section{Contact magnetic Brunn--Minkowski inequality on the Heisenberg group}\label{sec:Heisenberg}

	\subsection{Overview}

	In this section we prove Theorem~\ref{thm:critical-heisenberg-bm}.
	The idea is to apply the distorted Brunn--Minkowski inequality, Theorem~\ref{thm:distorted-bm},
	to the magnetic Minkowski average corresponding to the magnetic potential $c\eta$ for $c<1$,
	since in this case the global conditions of the theorem are satisfied (see Lemma~\ref{lem:heisenberg-conditions}).
	We then let $c\nearrow 1$ and show that the distortion coefficients and Minkowski averages converge. 
	
	Some of the magnetic geodesics escape to infinity as $c \nearrow 1$,
	namely those joining pairs of points which are not connected by a minimizing magnetic geodesic when $c=1$.
	While posing a technical hurdle,
	this phenomenon has a desirable effect from the point of view of the Brunn--Minkowski inequality: it causes the magnetic Minkowski average to explode.
	Indeed, if a pair $(p_0,p_1) \in A_0\times A_1$ has no connecting minimizing contact magnetic geodesic,
	then the magnetic Minkowski average has infinite volume (provided that each $p_i$ is a density point of $A_i$); this is the content of Lemma~\ref{lem:P-infinite-midpoints}.
	
	\subsection{Minimizing magnetic geodesics in \texorpdfstring{$\Heis^m$}{the Heisenberg group}}

	We begin by giving a description of minimizing magnetic geodesics in $\Heis^m$.
	Inoguchi and Munteanu~\cite{IM26} recently described magnetic geodesics on $\Heis^1$ as orbits of one-parameter subgroups of the oscillator group.
	Our formulas will be more elementary, compare~\cite[Section 3.2]{DIMN}.
	
	We work in the Heisenberg group $\Heis^m$ ($m \ge 1$) with coordinates $(x,y,z) \in\RR^m\times\RR^m\times\RR$,
	in which the left-invariant Riemannian metric and contact form are given by
	\[
		g = \sum_{j=1}^m (dx^j)^2 + \sum_{j=1}^m (dy^j)^2 + \left(dz - \sum_{j=1}^m y^jdx^j\right)^2,\quad
		\eta := dz - \sum_{j=1}^m y^jdx^j.
	\]

	We denote the \emph{horizontal projection} by
	\[
		\Pi:\Heis^m \to \RR^{2m}, \qquad \Pi(x,y,z) := (x,y).
	\]

	On $\RR^{2m}$, we denote by $\bJ$ the standard complex structure and by $\left< \cdot,
	\cdot \right>_{\RR^{2m}}$, $|\cdot|_{\RR^{2m}}$ and $\Vol_{\RR^{2m}}$ the Euclidean inner product,
	norm and volume, respectively.
	A linear transformation of $\RR^{2m}$ will be called unitary if it is unitary under the identification $\RR^{2m}\cong\CC^m$.
	On $\RR^m$ we denote by $(e_j)_{j=1}^m$ the standard basis and by $\left<\cdot,\cdot\right>_{\RR^m}$ the standard inner product.
	A ball of radius $R$ centered at a point $p$ will be denoted by $B_R(p)$; sometimes we will also use a superscript to indicate the space,
	e.g. $B_R^{\RR^{2m}}(0)$.
	
	The group law in $\Heis^m$ is
	\[
		\Pi(pq)=\Pi p+\Pi q, \qquad
		z(pq)=z(p)+z(q)+\la y(p),x(q)\ra_{\RR^m}, \qquad p,q \in \Heis^m.
	\]
	
	For future reference, let us recall the explicit form of isometries of $\Heis^m$ fixing the origin and preserving the contact form.
	
	\begin{lemma}\label{lem:heisenberg-unitary-symmetries}
		Let $\rU:\RR^{2m}\to\RR^{2m}$ be unitary, and define
		\[
			f_{\rU}(v):=
			\tfrac12\la y(\rU v),x(\rU v)\ra_{\RR^m}
			-
			\tfrac12\la y(v),x(v)\ra_{\RR^m},
			\qquad v\in\RR^{2m}.
		\]

		Then the map
		\[
			\rF_{\rU}(v,z):=(\rU v,z+f_{\rU}(v))
		\]
		is an isometry of $(\Heis^m,g)$ preserving $\eta$.
	\end{lemma}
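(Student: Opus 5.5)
The plan is a direct pullback computation. The metric splits as
\[
	g = \Pi^*\Bigl(\textstyle\sum_j (dx^j)^2+(dy^j)^2\Bigr) + \eta^2 .
\]
So it suffices to show two things: that $\Pi\circ\rF_{\rU} = \rU\circ\Pi$ with $\rU$ Euclidean-orthogonal, and that $\rF_{\rU}^*\eta = \eta$. The first is immediate from the definition of $\rF_{\rU}$. It gives $\rF_{\rU}^*\Pi^*(\sum (dx^j)^2+(dy^j)^2) = \Pi^*\rU^*(\sum (dx^j)^2+(dy^j)^2) = \Pi^*(\sum (dx^j)^2+(dy^j)^2)$, because a unitary map of $\CC^m\cong\RR^{2m}$ is in particular orthogonal. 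Once $\rF_{\rU}^*\eta=\eta$ is known, $g$ is preserved. Moreover $\rF_{\rU}$ is a diffeomorphism, with inverse $(w,z)\mapsto(\rU^{-1}w,\,z-f_{\rU}(\rU^{-1}w))$. Hence it is an isometry.

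For the contact form, write $\lambda:=\sum_j y^j\,dx^j$, viewed as a one-form on $\RR^{2m}$, so that $\eta = dz - \Pi^*\lambda$. Then
\[
	\rF_{\rU}^*\eta = dz + \Pi^*df_{\rU} - \Pi^*\rU^*\lambda ,
\]
and it remains to prove the identity $\rU^*\lambda - \lambda = df_{\rU}$ on $\RR^{2m}$. For this, decompose
\[
	\lambda = \tfrac12\sum_j\bigl(y^j\,dx^j - x^j\,dy^j\bigr) + \tfrac12\, d\bigl(\la y,x\ra_{\RR^m}\bigr).
\]
The first term equals $-\tfrac12\,\iota_{\mathcal R}\,\omega$, where $\omega=\sum_j dx^j\wedge dy^j$ is the standard symplectic form and $\mathcal R$ is the radial (Euler) vector field $v\mapsto v$. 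A unitary linear map preserves $\omega$, since $\omega(\cdot,\cdot)=\la \bJ\,\cdot,\cdot\ra$ and $\rU$ commutes with $\bJ$ and is orthogonal. Every linear map preserves $\mathcal R$. Hence $\rU^*$ fixes the first term. Pulling back the exact second term gives
\[
	\rU^*\lambda-\lambda = \tfrac12\, d\bigl(\la y(\rU v),x(\rU v)\ra_{\RR^m} - \la y(v),x(v)\ra_{\RR^m}\bigr) = df_{\rU},
\]
which is exactly what is needed. Therefore $\rF_{\rU}^*\eta = dz - \Pi^*\lambda = \eta$.

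There is no serious obstacle. The only delicate points are bookkeeping: getting the signs in the splitting of $\lambda$ right, and checking that the invariance of the Liouville-type form $\tfrac12\sum_j(y^j\,dx^j-x^j\,dy^j)$ uses only $\rU^*\omega=\omega$ and linearity. One can verify this last point concretely by evaluating the form at $v$ on a tangent vector $w$, where it equals $\tfrac12\omega(w,v)$ up to the sign convention, which is invariant under $\rU$.
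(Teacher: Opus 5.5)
Your proof is correct and takes the same route as the paper. Both arguments use the splitting $g = \Pi^*g_{\RR^{2m}} + \eta^2$ together with the identity $\rU^*\lambda - \lambda = df_{\rU}$, which gives $\rF_{\rU}^*\eta = \eta$ and hence $\rF_{\rU}^*g = g$. The only difference is that the paper simply asserts this identity as a consequence of unitarity, whereas you justify it by writing $\lambda = -\tfrac12\iota_{\mathcal R}\omega + \tfrac12 d\la y,x\ra_{\RR^m}$, noting that the first term is $\rU$-invariant, and also checking explicitly that $\rF_{\rU}$ is a diffeomorphism.
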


	\begin{proof}
		Write $\beta:=\sum_j y^j\,dx^j$. Since $\rU$ is unitary, we have that $df_{\rU}=\rU^*\beta-\beta$,
		whence
		\[
			\rF_{\rU}^*\eta
			=
			\rF_{\rU}^*(dz -\beta)
			=
			dz+df_{\rU}-\rU^*\beta
			=
			dz-\beta
			=
			\eta.
		\]

		Since $\rU$ is unitary, we also have $\rF_{\rU}^*g_{\RR^{2m}} = \rU^*g_{\RR^{2m}} = g_{\RR^{2m}}$,
		where $g_{\RR^{2m}}$ is the Euclidean metric on $\RR^{2m}$. Since $g = g_{\RR^{2m}} + \eta^2$,
		it follows that 
		$\rF_{\rU}^*g=g$.
	\end{proof}

	Let $c \in [0,1]$. We shall refer to unit-speed global minimizers of $\Len_g - c \cdot \int\eta$ as \emph{minimizing $c$-magnetic geodesics}.
	Only in this section, we parametrize magnetic geodesics by constant, rather than unit, speed,
	on the interval $[0,1]$.

	For $r > 0$ and $c \in [0,1)$, define
	\[
		\Psi_{c,r}(\theta) : =
		\begin{cases}
			\dfrac{(\theta-\sin\theta)r^2}{8\sin^2(\theta/2)}
			+
			\dfrac{\theta+c|\theta|\sqrt{1+\tfrac{(1-c^2)r^2}{4\sin^2(\theta/2)}}}{1-c^2}
			&
			\theta\in(-2\pi,2\pi)\setminus\{0\},
			\vspace{10pt}
			\\
			\dfrac{cr}{\sqrt{1-c^2}} 
			& 
			\theta =0.
		\end{cases}
	\]	
	For $r > 0$, set also
	\begin{equation}
		\Psi_{1,r}(\theta) : = 
		\frac{\theta}{2}-\frac{r^2}{4}\cot\frac{\theta}{2}, \qquad
		-2\pi<\theta<0.
	\end{equation}

	\begin{lemma}[Minimizing $c$-magnetic geodesics in $\Heis^m$]\label{lem:heisenberg-minimizers}
		Let $r \ge 0$ and let
		\[
			p_1 := (r e_1,0,z_1) \in \Heis^m.
		\]
		\begin{enumerate}[(i)]
			\item\label{item:heis-r-positive}
			Suppose that $r > 0$. Then for every $0 \le c \le 1$ there exists a unique minimizing $c$-magnetic geodesic $\gamma$ joining $0$ to $p_1$.
			The constant speed parametrization of $\gamma$ on the interval $[0,1]$ is given by $\gamma(t) = (\rx(t),\ry(t),\rz(t))$,
			where
			\begin{equation}\label{eq:normalized-minimizer-nonzero-xy}
				\rx(t)
				=
				R
				\left(\sin\left(\theta(t-\tfrac12)\right)+\sin\tfrac\theta2\right)e_1,
				\quad
				\ry(t)
				=
				R
				\left(\cos\left(\theta(t-\tfrac12)\right)-\cos\tfrac\theta2\right)e_1
			\end{equation}
			and
			\begin{equation}\label{eq:normalized-minimizer-nonzero-z}
				\rz(t)
				=
				tz_1
				+
				R^2\left[
					\tfrac14\left(\sin\left(2\theta(t-\tfrac12)\right)+(2t-1)\sin\theta\right)
					-
					\cos\tfrac\theta2\,
					\sin\left(\theta(t-\tfrac12)\right)
				\right].
			\end{equation}
			Here $\theta \in (-2\pi,2\pi)$ and $R \in \RR\setminus\{0\}$ are given by
			\[
				\theta=\Psi_{c,r}^{-1}(z_1)
				\qquad\text{and}\qquad
				R : = \frac{r}{2\sin(\theta/2)}.
			\]
			The case $\theta=0$ is understood by continuity; in that case
			\begin{equation}\label{eq:normalized-minimizer-line}
				\rx(t)=tr e_1,\qquad
				\ry(t)=0\qquad
				\text{and}
				\qquad
				\rz(t)=tz_1.
			\end{equation}
			In particular, $\Pi\gamma$ is a line segment when $\theta=0$,
			and a circular arc of radius $|R|$ and signed central angle $\theta$ otherwise. Moreover,
			\begin{equation}\label{eq:length-minimizer}
				\Len_g(\gamma) = 
				\sqrt{
					\frac{r^2\theta^2}{4\sin^2(\theta/2)}
					+
					\left(
						z_1-\frac{r^2(\theta-\sin\theta)}{8\sin^2(\theta/2)}
					\right)^2
				}
			\end{equation}
			where the right-hand side is understood by continuity to be $\sqrt{r^2+z_1^2}$ when $\theta=0$.

			\item\label{item:heis-r-zero-subcritical}
			Suppose that $r=0$ and $0\le c<1$.
			\begin{enumerate}[(a)]
				\item
					If $-\frac{2\pi}{1+c}\le z_1\le \frac{2\pi}{1-c}$ then the unique minimizing $c$-magnetic geodesic joining $0$ to $p_1$ is the vertical segment $(0,0,tz_1)$, and it has length $|z_1|$. 
				\item
					If $z_1>\frac{2\pi}{1-c}$ or $z_1<-\frac{2\pi}{1+c}$ then for every $\bJ$-invariant plane in $\RR^{2m}$ there exists a minimizing $c$-magnetic geodesic joining $0$ to $p_1$ whose horizontal projection is a full circle contained in that plane, and every minimizer is of this form.
			\end{enumerate}
			\item\label{item:heis-r-zero-critical}
			Suppose that $r=0$ and $c=1$.
			\begin{enumerate}[(a)]
				\item If $z_1>-\pi$ then the vertical segment $(0,0,tz_1)$ is the unique minimizing $c$-magnetic geodesic joining $0$ to $p_1$.
				\item If $z_1=-\pi$ then every positively oriented circle contained in a $\bJ$-invariant plane and enclosing nonnegative signed area occurs as the horizontal projection of a minimizing $c$-magnetic geodesic joining $0$ to $p_1$.
				\item If $z_1<-\pi$ then there is no minimizing $c$-magnetic geodesic joining $0$ to $p_1$.
			\end{enumerate}
		\end{enumerate}
	\end{lemma}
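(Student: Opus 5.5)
The plan is to reduce the minimization of $\Len_g-c\int\eta$ to a finite-dimensional problem. The reduction uses the splitting $g=g_{\RR^{2m}}+\eta^2$ together with a Dido-type isoperimetric argument in the horizontal projection.

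\emph{Reduction.} Let $\gamma:[0,1]\to\Heis^m$ join $0$ to $p_1$, and put $\sigma:=\Pi\gamma$. Write $a:=\Len_{\RR^{2m}}(\sigma)$ and $b:=\int_\gamma\eta$. Since $\eta=dz-\beta$ with $\beta=\sum_j y^j dx^j$, we have $b=z_1-\int_\sigma\beta$. The quantity $\int_\sigma\beta$ is, up to an explicit boundary term (which vanishes here because the endpoints have $y=0$), minus the sum of the signed areas enclosed by the projections of $\sigma$, closed up by the chord, onto the complex lines $\{(x^j,y^j)\}$. By Cauchy--Schwarz, $\Len_g(\gamma)=\int\sqrt{|\dot\sigma|^2+\eta(\dot\gamma)^2}\ge\sqrt{a^2+b^2}$. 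Equality holds iff the ratio $\eta(\dot\gamma)/|\dot\sigma|$ is constant, and for any horizontal curve $\sigma$ we can always achieve this by choosing $z$ appropriately. Hence the action is at least $F_c(a,b):=\sqrt{a^2+b^2}-cb$, and the original problem becomes: minimize $F_c(a,z_1-\int_\sigma\beta)$ over planar data. Since $\partial_bF_c=b/\sqrt{a^2+b^2}-c$, for fixed $a$ the optimal $\sigma$ makes $b$ as close as possible to the value $b^*=ca/\sqrt{1-c^2}$ (for $c=1$, as large as possible). This requires the range of $\int_\sigma\beta$ among curves of length $a$ from $0$ to $re_1$. By the isoperimetric inequality (Wirtinger/Dido), applied to each complex coordinate and then summed via Cauchy--Schwarz, the extremal enclosed area is attained exactly by circular arcs lying in a single $\bJ$-invariant plane. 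Rotating with the isometries $\rF_{\rU}$ of Lemma~\ref{lem:heisenberg-unitary-symmetries}, which fix $p_1$ when $\rU$ fixes $e_1$, we may take that plane to be the $(x^1,y^1)$-plane, provided $r>0$. All attainable values in between are realized by arcs as well, by continuity.

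\emph{One-variable problem.} Parametrize the arcs from $0$ to $re_1$ by their signed central angle $\theta\in(-2\pi,2\pi)$. The radius is $R=r/(2\sin(\theta/2))$, the length is $a=r\theta/(2\sin(\theta/2))$, and the enclosed segment area gives $\int_\sigma\beta=-r^2(\theta-\sin\theta)/(8\sin^2(\theta/2))$ (signs to be checked). Substituting, $\Len_g$ is exactly the right-hand side of \eqref{eq:length-minimizer}, and the action becomes a function $G(\theta)$ of one variable. I will then show that the critical-point equation $G'(\theta)=0$ is equivalent to $z_1=\Psi_{c,r}(\theta)$; the square root in $\Psi_{c,r}$ comes from solving $b/\sqrt{a^2+b^2}=c$. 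The next step is to check that $\Psi_{c,r}$ is a strictly increasing bijection from $(-2\pi,2\pi)$ onto $\RR$ for $c<1$, and that $\Psi_{1,r}$ is a strictly increasing bijection from $(-2\pi,0)$ onto $\RR$. For $c=1$ the choice of sign is forced, since $F_1$ is decreasing in $b$, so only one orientation of arc is admissible. It then follows that $G$ has a unique critical point, and that this point is the global minimum. Coercivity handles $c<1$: we have $F_c\ge(1-c)\sqrt{a^2+b^2}$, so minimizers exist by Arzel\`a--Ascoli. For $c=1$ with $r>0$, the horizontal length $a\ge r>0$ keeps $F_1$ bounded below by a positive function that tends to the infimum only at the critical point. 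Finally, writing down the horizontal lift with constant $\eta(\dot\gamma)/|\dot\sigma|$ and integrating $dz=\eta+\beta$ yields \eqref{eq:normalized-minimizer-nonzero-xy}--\eqref{eq:normalized-minimizer-nonzero-z}. Uniqueness follows from the equality cases of Cauchy--Schwarz and of the isoperimetric inequality.

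\emph{Case $r=0$.} Here $\sigma$ is a closed curve, so its enclosed area satisfies $|\text{area}|\le a^2/(4\pi)$. Compare $F_c(0,z_1)=|z_1|(1\mp c)$, given by the vertical segment, with $\inf_a F_c(a,z_1\pm a^2/(4\pi))$, given by full circles in $\bJ$-invariant planes. The thresholds $z_1=2\pi/(1-c)$ and $z_1=-2\pi/(1+c)$ come out of this elementary comparison, and in case (b) the full circles give a whole family of minimizers, indexed by the $\bJ$-invariant planes. For $c=1$ and $z_1<0$, the function $a\mapsto F_1(a,z_1+a^2/(4\pi))$ decreases to its infimum only as $a\to\infty$ when $z_1<-\pi$. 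Therefore the infimum is not attained, which gives (iii)(c). The boundary value $z_1=-\pi$ is the borderline case, in which the described circles (and the segment) tie.

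I expect the main obstacle to be the monotonicity of $\Psi_{c,r}$, together with the verification that the unique critical point is the global minimizer, uniformly across the sign change of $\theta$ and the removable singularity at $\theta=0$. I would first try to show $\Psi_{c,r}'>0$ directly, splitting into the two terms: $(\theta-\sin\theta)/\sin^2(\theta/2)$ is increasing on $(-2\pi,2\pi)$, and I would handle the second term separately on each sign of $\theta$.
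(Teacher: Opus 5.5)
Your overall route is the paper's: a Minkowski/Cauchy--Schwarz reduction to $F_c(a,b)$, a Dido-type reduction to arcs in $\bJ$-invariant planes, and calculus in $\theta$. However, for $0\le c<1$ the middle step is set up the wrong way round, and as written it does not show that minimizers are arcs.

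\textbf{The reduction to arcs.} At fixed horizontal length $a$, the function $F_c(a,\cdot)$ is minimized at the interior point $b^*(a)=ca/\sqrt{1-c^2}$. For every large $a$ this point lies strictly inside the interval of attainable values of $b$, since that interval is centered at $z_1$ with half-width of order $a^2$. Interior values are realized by many curves that are not arcs in a $\bJ$-invariant plane (non-circular curves, or arcs in tilted planes when $m\ge2$). All of these curves are optimal at that length, and nothing in your argument prevents one of them from being the global minimizer. So two steps are unjustified: restricting to the one-parameter family of arcs, and deriving uniqueness ``from the equality case of the isoperimetric inequality''. The same problem affects your comparison with full circles when $r=0$.

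The missing ingredient is $\partial_aF_c=a/\sqrt{a^2+b^2}>0$: fix $b$ (i.e.\ the enclosed area) and minimize length instead. The map $\theta\mapsto r^2(\theta-\sin\theta)/(8\sin^2(\theta/2))$ is a bijection of $(-2\pi,2\pi)$ onto $\RR$. Hence every competitor has a unique planar arc with the same value of $\int_\sigma\beta$. That arc is strictly shorter unless the competitor \emph{is} that arc, so the arc has strictly smaller action. For $r=0$ you minimize $\sqrt{4\pi|\alpha|+(z_1+\alpha)^2}-c(z_1+\alpha)$ over the signed area $\alpha$. This is how the paper proceeds, and it gives the reduction and the uniqueness at once. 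For $c=1$ your fixed-length argument is fine, because $F_1$ is strictly decreasing in $b$.

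\textbf{The one-variable step.} This step needs more than a unique critical point. Your heuristic for the square root is not right, because $a$ also varies with $\theta$. Write $A(\theta)=\tfrac12R^2(\theta-\sin\theta)$ for $\int_\sigma\beta$ along the arc. One checks that $A'=Ra'>0$ for $\theta\neq0$ and $b'=-A'$, so
\[
G'(\theta)=\frac{A'(\theta)}{\sqrt{a^2+b^2}}\Bigl(\theta-b+c\sqrt{a^2+b^2}\Bigr).
\]
The bracket is strictly decreasing in $b$.
\begin{itemize}
\item For $c<1$ it vanishes exactly when $b=\bigl(\theta+c|\theta|\sqrt{1+(1-c^2)R^2}\bigr)/(1-c^2)$, i.e.\ when $z_1=\Psi_{c,r}(\theta)$.
\item For $c=1$ it is positive when $\theta>0$, and for $\theta<0$ it vanishes exactly when $z_1=\Psi_{1,r}(\theta)$.
\end{itemize}
Hence $\mathrm{sgn}\,G'=\mathrm{sgn}(\Psi_{c,r}(\theta)-z_1)$. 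This sign information is genuinely needed when $c=1$. There $G(\theta)\to2\pi$, not $+\infty$, as $\theta\to-2\pi$, so a unique critical point alone does not rule out the infimum being approached at that end rather than attained.

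\textbf{Monotonicity of $\Psi_{c,r}$.} Your plan to prove $\Psi_{c,r}'>0$ term by term fails for $\theta<0$. There the term $\theta\bigl(1-c\sqrt{1+(1-c^2)R^2}\bigr)/(1-c^2)$ is decreasing near $-2\pi$, because $R\to\infty$. Combining the two terms instead gives
\[
\Psi_{c,r}'=\frac{\bigl(1+(1-c^2)A'\bigr)\bigl(1+\mathrm{sgn}(\theta)\,c/Q\bigr)}{1-c^2}>0,
\qquad Q=\sqrt{1+(1-c^2)R^2}\ge1.
\]
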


	\begin{proof}[Proof of Lemma~\ref{lem:heisenberg-minimizers}]
		Let $\gamma=(\rx,\ry,\rz)$ be a piecewise-$C^1$ curve joining $0$ to $p_1$, and
		write
		\[
			{l}:=\int |(\dot\rx,\dot\ry)|\,dt
			\qquad\text{and}\qquad
			\zeta:=\int \eta(\dot\gamma(t))\,dt.
		\]

		By Minkowski's inequality,
		\begin{equation*}
		\begin{split}
			\Len(\gamma)-c\int_\gamma\eta
			&=
			\int\sqrt{|(\dot\rx(t),\dot\ry(t))|^2+\eta(\dot\gamma(t))^2}\,dt
			-c\cdot\int\eta(\dot\gamma(t))\,dt
			\\
			&\ge
			\sqrt{{l}^2+\zeta^2}-c \cdot \zeta,
		\end{split}
		\end{equation*}
		with equality precisely when $|(\dot\rx,\dot\ry)|$ and $\eta(\dot\gamma)$ are
		proportional by a constant factor. Since the functional $\Len-c\int\eta$ is
		invariant under orientation-preserving reparametrization, we may assume that
		$\gamma$ is defined on $[0,1]$ and that
		$\Pi\gamma$ has constant (possibly zero) Euclidean speed. Thus it suffices to optimize over
		curves $\gamma = (\rx,\ry,\rz)$ satisfying $|(\dot\rx,\dot\ry)| \equiv {l}$ and
		$\eta(\dot\gamma) \equiv \zeta$. Since
		$\eta = dz - \sum_jy_jdx_j$ and $\rz(0) = 0$, this means that
		\begin{equation}\label{eq:rzt}
			\rz(t) = \int_0^t\la\ry(s),\dot\rx(s)\ra\,ds + \zeta \cdot t,
			\qquad t\in [0,1].
		\end{equation}

		Hence $\rz$ is determined uniquely by $(\rx,\ry)$, and it suffices to minimize the quantity
		\begin{equation}\label{eq:aux-heis-reduced-functional}
			\sqrt{{l}^2 + \zeta^2} - c \cdot \zeta, 
		\end{equation}
		over constant speed curves $(\rx,\ry):[0,1]\to\RR^{2m}$ joining the origin to $(re_1,0)$,
		where ${l}$ is the
		Euclidean arclength of the curve and
		\begin{equation}\label{eq:z1}
			\zeta = z_1 - \int_0^1\la\ry(t),\dot\rx(t)\ra\,dt.
		\end{equation}

		For fixed $\zeta$ a minimizer exists; it is a circular
		arc or a line segment, contained in a $\bJ$-invariant plane (this is known as Dido's problem).
		Thus we seek a minimizer of~\eqref{eq:aux-heis-reduced-functional} only among such curves.
		
		\begin{enumerate}[(i)]
			\item \textit{Suppose that $r > 0$.}
			By the above discussion, it remains to minimize~\eqref{eq:aux-heis-reduced-functional} over circular arcs or line segments $\alpha$ joining $(0,0)$ to $(r,0)$ in the Euclidean plane, where $l$ is the length of $\alpha$ and where
			\[
				\zeta = z_1-\int_\alpha y\,dx.
			\]

			We parametrize the family of such circular arcs $\alpha$ by their signed central angle $\theta \in (-2\pi,2\pi)$,
			where $\theta = 0$ corresponds to $\alpha$ being a line segment. The radius of $\alpha = \alpha(\theta)$ is
			\begin{equation}
				\frac{r}{2|\sin(\theta/2)|} \in (0,\infty],
			\end{equation}
			its arclength is
			\begin{equation}\label{eq:l}
				l= 
				\dfrac{r|\theta|}{2|\sin(\theta/2)|}
			\end{equation}
			and the signed area enclosed between it and the $x$ axis is
			\begin{equation}\label{eq:a}
				-\int_\alpha y\,dx
				=
				-\dfrac{r^2(\theta - \sin\theta)}{8\sin^2(\theta/2)},
			\end{equation}
			with~\eqref{eq:l} and~\eqref{eq:a} understood by continuity at $\theta=0$.
			Substituting~\eqref{eq:l} and~\eqref{eq:a} into~\eqref{eq:aux-heis-reduced-functional},
			we see that our goal is to minimize 
			\[
				f_{c,r}(\theta) : = 
				\begin{cases}
					\sqrt{\dfrac{r^2\theta^2}{4\sin^2(\theta/2)} + \left(z_1-\dfrac{r^2(\theta - \sin\theta)}{8\sin^2(\theta/2)}\right)^2} - c\cdot\left(z_1-\dfrac{r^2(\theta - \sin\theta)}{8\sin^2(\theta/2)}\right) &\theta \ne 0,
					\vspace{10pt}
					\\
					\sqrt{r^2 + z_1^2} - cz_1& \theta=0
				\end{cases}
			\]
			over the interval $(-2\pi,2\pi)$.
			The function $f_{c,r}$ is smooth and tends to $+\infty$ as $\theta\to\pm2\pi$ when $c<1$.
			When $c=1$, it tends to $+\infty$ as $\theta\to2\pi$ and to $2\pi$ as $\theta\to-2\pi$.
			
			For $c \in [0,1)$, the function $\Psi_{c,r}$ is smooth, strictly increasing,
			and tends to $\pm\infty$ as $\theta\to\pm2\pi$,
			while $\Psi_{1,r}$ is increasing from $-\infty$ to $+\infty$ on $(-2\pi,0)$. Moreover,
			a direct computation shows that $\mathrm{sgn}(f'_{c,r})=\mathrm{sgn}(\Psi_{c,r}-z_1)$.
			Therefore $\theta:=\Psi_{c,r}^{-1}(z_1)$ is the unique minimizer of $f_{c,r}$.
			The formulas in~\eqref{eq:normalized-minimizer-nonzero-xy},
			with the case $\theta=0$ understood by continuity to be~\eqref{eq:normalized-minimizer-line},
			then give a constant-speed parametrization of the minimizing arc.
			Finally,~\eqref{eq:rzt} and~\eqref{eq:z1} give~\eqref{eq:normalized-minimizer-nonzero-z} and the formula for $\rz$ in~\eqref{eq:normalized-minimizer-line}, and a direct computation gives the length formula~\eqref{eq:length-minimizer}.

			\item \textit{Suppose that $r=0$ and $0\le c<1$.}
			Since $r=0$, the curve $\Pi\gamma$ is either constant or a closed circle contained in a $\bJ$-invariant plane.
			Writing
			\[
				a:=-\int_\gamma \sum_j y_j\,dx_j,
			\]
			it therefore remains to minimize
			\[
				f_{c,0} : = \sqrt{4\pi |a|+(z_1+a)^2}-c(z_1+a)
			\]
			over such curves. The minimum is attained at $a=0$ precisely when
			\[
				-\frac{2\pi}{1+c}\le z_1\le \frac{2\pi}{1-c},
			\]
			in which case $\Pi\gamma$ is constant.
			Outside this interval the minimum is attained at a single nonzero value of $a$,
			positive when $z_1<-2\pi/(1+c)$ and negative when $z_1>2\pi/(1-c)$; the curve $\Pi\gamma$ can then be any closed circle contained in a $\bJ$-invariant plane and enclosing a signed area $a$.

			\item \textit{Suppose that $r=0$ and $c=1$.}
			The function $f_{1,0}$ is decreasing on $(-\infty,0]$,
			while on $[0,\infty)$ its derivative has the sign of $z_1+\pi$.
			Thus the vertical segment ($a=0$) is the unique minimizer when $z_1>-\pi$.
			When $z_1=-\pi$, the function $f_{1,0}$ is constant on $[0,\infty)$, so any positively oriented circle is a minimizer. If $z_1<-\pi$ then
			the infimum is $2\pi$, approached by positively oriented circles with radii tending to infinity and never attained.
		\end{enumerate}
		The proof of the lemma is complete.
	\end{proof}

	\begin{corollary}\label{cor:heisenberg-unique-minimizers}
		Let $0\le c\le1$ and let $p_0,p_1\in\Heis^m$. Then $p_0$ and $p_1$ are joined by a unique minimizing
		$c$-magnetic geodesic if and only if one of the following holds:
		\begin{enumerate}[(i)]
			\item $\Pi(p_0^{-1}p_1)\ne0$.
			\item $0\le c<1$, $\Pi(p_0^{-1}p_1)=0$, and
			\[
				-\frac{2\pi}{1+c}\le z(p_0^{-1}p_1)\le \frac{2\pi}{1-c}.
			\]
			\item $c=1$, $\Pi(p_0^{-1}p_1)=0$, and
			\[
				z(p_0^{-1}p_1)>-\pi.
			\]
		\end{enumerate}
	\end{corollary}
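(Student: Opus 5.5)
The plan is to reduce to Lemma~\ref{lem:heisenberg-minimizers} using left translations and the unitary symmetries of Lemma~\ref{lem:heisenberg-unitary-symmetries}. Left translations of $\Heis^m$ are isometries of the left-invariant metric $g$. They also preserve the left-invariant form $\eta$, so they preserve $\Len_g - c\int\eta$ and map minimizing $c$-magnetic geodesics to minimizing $c$-magnetic geodesics. Left multiplication by $p_0^{-1}$ therefore gives a bijection between minimizers joining $p_0$ to $p_1$ and minimizers joining $0$ to $q:=p_0^{-1}p_1$. Hence it suffices to treat $p_0=0$ and $p_1=q$.

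Next, write $\Pi q=v\in\RR^{2m}$. If $v\ne0$, the unitary group acts transitively on spheres of $\RR^{2m}\cong\CC^m$, so I choose a unitary $\rU$ with $\rU v=re_1$, where $r=|v|_{\RR^{2m}}>0$. The map $\rF_{\rU}$ fixes the origin, since $f_{\rU}(0)=0$. By Lemma~\ref{lem:heisenberg-unitary-symmetries} it is an isometry preserving $\eta$, so it again induces a bijection of minimizers. It sends $q$ to $(re_1,0,z_1)$ for some $z_1$. Lemma~\ref{lem:heisenberg-minimizers}\eqref{item:heis-r-positive} then gives a unique minimizer for every $c\in[0,1]$, which proves that (i) is sufficient.

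Now suppose $\Pi q=0$, so $q=(0,0,z_1)$ with $z_1=z(p_0^{-1}p_1)$, and no rotation is needed. For $0\le c<1$, Lemma~\ref{lem:heisenberg-minimizers}\eqref{item:heis-r-zero-subcritical} applies:
\begin{itemize}
\item In case (a), i.e.\ $-2\pi/(1+c)\le z_1\le 2\pi/(1-c)$, the minimizer is unique.
\item In case (b), there is a minimizer whose horizontal projection is a circle in every $\bJ$-invariant plane. There are infinitely many such planes even when $m=1$: the circle through the origin with prescribed signed area can be rotated about the origin. This rotation is implemented by the unitary maps $e^{i\phi}$ acting via $\rF_{\rU}$, which fix $0$ and $q$ because $f_{\rU}(0)=0$. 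Distinct rotations give distinct curves, so uniqueness fails.
\end{itemize}
For $c=1$, Lemma~\ref{lem:heisenberg-minimizers}\eqref{item:heis-r-zero-critical} applies:
\begin{itemize}
\item If $z_1>-\pi$, the minimizer is unique.
\item If $z_1=-\pi$, every positively oriented circle of nonnegative enclosed area through the origin occurs. This gives infinitely many minimizers, including the vertical segment (area $0$) and genuine circles.
\item If $z_1<-\pi$, no minimizer exists.
\end{itemize}
Together these give sufficiency of (ii) and (iii), and necessity of the three conditions in every remaining case.

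The main point to handle carefully is the non-uniqueness in the degenerate cases. I need to confirm that distinct circles in Lemma~\ref{lem:heisenberg-minimizers}(ii)(b) and (iii)(b) really yield distinct curves in $\Heis^m$. This holds because their horizontal projections are distinct subsets of $\RR^{2m}$. Beyond that, the argument is bookkeeping of the invariance of the action under the two symmetry groups.
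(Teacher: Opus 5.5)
Your proposal is correct and follows the paper's proof. You reduce by a left translation and an isometry $\rF_{\rU}$ to the endpoint pairs $0,(re_1,0,z_1)$ or $0,(0,0,z_1)$, and then read off each case from Lemma~\ref{lem:heisenberg-minimizers}. Your explicit non-uniqueness check, rotating by $\rF_{\rU}$ with $\rU=e^{i\phi}$, spells out what the paper leaves implicit. One slip: for $m=1$ there is only one $\bJ$-invariant plane, so it is the rotated circles, not the planes, that are infinitely many.
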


	\begin{proof}
		Since left translations preserve the metric and the contact form, it is enough to consider minimizing
		$c$-magnetic geodesics joining $0$ to $p:=p_0^{-1}p_1$. We may also
		apply the isometry $\rF_{\rU}$ from Lemma~\ref{lem:heisenberg-unitary-symmetries},
		where $\rU$ is a unitary transformation mapping
		$\Pi p$ to $(r e_1,0)$, with $r=|\Pi p|_{\RR^{2m}}$. If $r>0$,
		the result follows from
		Lemma~\ref{lem:heisenberg-minimizers}\ref{item:heis-r-positive}. If $r=0$, then $p=(0,0,z(p))$,
		and the isometries from Lemma~\ref{lem:heisenberg-unitary-symmetries} fix $p$ since $f_{\rU}(0)=0$.
		The remaining assertion follows directly from
		Lemma~\ref{lem:heisenberg-minimizers}\ref{item:heis-r-zero-subcritical},\ref{item:heis-r-zero-critical}.
	\end{proof}

	\subsection{Convergence of midpoints and distortion coefficients}
	Denote the center of $\Heis^m$ by 
	\[
		Z:=\{(0,0,z) \,\,\colon\,\,z \in \RR\},
	\]
	and set
	\[
		\cW:=\{(p_0,p_1)\in\Heis^m\times\Heis^m:
		\,\,p_0^{-1}p_1\in Z 
		\quad\text{and}\quad 
		z(p_0^{-1}p_1)\le-\pi\}.
	\]

	For $0\le c\le1$, whenever $p_0$ and $p_1$ are joined by a unique minimizing
	$c$-magnetic geodesic, denote its $\lambda$-midpoint by
	\[
		\mm_\lambda^c(p_0,p_1).
	\]

	Corollary~\ref{cor:heisenberg-unique-minimizers} implies 
	that the map $\mm_\lambda^1$ is defined on
	$(\Heis^m\times\Heis^m)\setminus\cW$, and that on any compact
	subset of $(\Heis^m\times\Heis^m)\setminus\cW$,
	the map $\mm_\lambda^c$ is defined for all $c$ sufficiently
	close to $1$.

	Recall from Section~\ref{sec:heisenberg-example} that the magnetic Ricci curvature associated to $\Omega_c = d(c\eta)$ is
	\[
		\Ric_{\Omega_c}
		=
		\frac m2(c-\eta)^2+\frac12(c^2-1)(1-\eta^2).
	\]

	In particular, for $0\le c\le 1$, the minimum of $\Ric_{\Omega_c}$ on each fiber of $SM$ equals
	\[
		-\frac{(m+1)(1-c^2)^2}{2(m+1-c^2)}.
	\]

	\begin{lemma}\label{lem:midpoint-convergence-away-from-critical-ray}
		Let $A_0,A_1\subseteq \Heis^m$ be compact sets with
		$(A_0\times A_1)\cap\cW = \varnothing$ and let
		$0<\lambda<1$. For $0\le c\le 1$, set
		\[
			k_c:=
			-\frac{(m+1)(1-c^2)^2}{2(m+1-c^2)}
			=
			\min_{S\Heis^m}\Ric_{\Omega_c}.
		\]
		Then, as $c\nearrow1$,
		\[
			\mm_\lambda^c\to\mm_\lambda^1
			\quad\text{uniformly on }A_0\times A_1
		\]
		and
		\[
			\tau_t^{k_c,n}(A_0,A_1)\longrightarrow t
			\qquad
			\text{for all }t\in(0,1),
		\]
		where
		\[
			n := \dim(\Heis^m) = 2m+1.
		\]
	\end{lemma}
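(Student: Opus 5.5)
By left-invariance and the symmetries $\rF_{\rU}$ of Lemma~\ref{lem:heisenberg-unitary-symmetries} (both preserve $g$ and $\eta$, hence commute with $\mm^c_\lambda$ and preserve lengths of minimizers), the plan is to reduce everything to the normal form $p_0=0$, $p_1=(re_1,0,z_1)$, where Lemma~\ref{lem:heisenberg-minimizers} gives explicit formulas. The key observation is that $(r,z_1)$ then ranges over a compact set $K\subseteq[0,\infty)\times\RR$ which, by the assumption $(A_0\times A_1)\cap\cW=\varnothing$, avoids $\{0\}\times(-\infty,-\pi]$.

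\textbf{Uniform convergence of midpoints.} It suffices to show that the pair (central angle $\theta_c(r,z_1)$, speed data) converges to its $c=1$ value uniformly on $K$. Then the explicit formulas \eqref{eq:normalized-minimizer-nonzero-xy}--\eqref{eq:normalized-minimizer-nonzero-z}, evaluated at $t=\lambda$, converge uniformly. Uniform convergence on $A_0\times A_1$ then follows because the reduction maps $(p_0,p_1)\mapsto(r,z_1)$ together with the isometries used are continuous and independent of $c$.

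\emph{Case $r>0$.} Here $\theta_c=\Psi_{c,r}^{-1}(z_1)$. I would show that $\Psi_{c,r}\to\Psi_{1,r}$ locally uniformly in $(\theta,r)$ on $(-2\pi,0)\times(0,\infty)$, and that $\Psi_{c,r}(\theta)\to+\infty$ for $\theta\ge0$. For $\theta<0$ the bracket $\theta+c|\theta|\sqrt{\cdots}$ equals $\theta(1-c\sqrt{1+(1-c^2)s})$ with $s=r^2/(4\sin^2(\theta/2))$. Expanding $\sqrt{1+(1-c^2)s}$ to first order in $1-c^2$ gives the limit
\[
\frac{\theta}{2}\Bigl(1-\frac{r^2}{4\sin^2(\theta/2)}\Bigr)
\]
after dividing by $1-c^2$. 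Combined with the first term of $\Psi_{c,r}$, this reproduces $\Psi_{1,r}$. The identity to verify is $\frac{\theta-\sin\theta}{8\sin^2(\theta/2)}-\frac{\theta}{8\sin^2(\theta/2)}=-\frac14\cot\frac\theta2$, which holds. For $\theta\ge0$ the second term is at least $\theta/(1-c)$, which tends to $+\infty$. Monotonicity of $\Psi_{c,r}$ together with this convergence then gives $\Psi_{c,r}^{-1}(z_1)\to\Psi_{1,r}^{-1}(z_1)$, uniformly on compact sets where $r>0$.

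\emph{Case $r$ small, $z_1>-\pi$.} This is the main obstacle, because uniformity degenerates near $r=0$. Near $\{0\}\times(-\pi,\infty)$ the limit minimizer is the vertical segment. I would show directly that the midpoints are uniformly close to $(0,0,\lambda z_1)$ when $r$ is small, using the length formula \eqref{eq:length-minimizer}. The action of the minimizer is bounded by that of a nearby competitor, the segment, giving action at most $\sqrt{r^2+z_1^2}-cz_1$. For $\theta$ bounded away from $-2\pi$, the horizontal radius $|R|=r/(2|\sin(\theta/2)|)$ is then small. It remains to rule out $\theta_c\to-2\pi$, which is exactly where $z_1>-\pi$ is needed. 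By the computation in Lemma~\ref{lem:heisenberg-minimizers}(iii), large negative-angle circles have action close to $2\pi$, whereas the vertical segment has action $\sqrt{r^2+z_1^2}-cz_1<2\pi-\delta$ uniformly for $z_1>-\pi+\delta'$ and $c$ near $1$. If that margin fails (for instance when $z_1$ is large and positive), I would instead use the quantitative lower bound $\Psi_{c,r}(\theta)\ge\ldots$ to trap $\theta_c$ in a compact subinterval, or fall back on a compactness argument: minimizers with bounded action and endpoints in a compact set stay in a compact set by a coercivity estimate, and any limit of $c$-minimizers is a $1$-minimizer, which is unique off $\cW$ by Corollary~\ref{cor:heisenberg-unique-minimizers}. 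This compactness/uniqueness route also yields uniformity via a standard contradiction argument with sequences $(p_0^j,p_1^j,c_j)$. I would likely make it the primary argument, with the explicit formulas serving only to supply the coercivity (no escape to infinity) away from $\cW$.

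\textbf{Distortion coefficients.} We have $k_c\to0$. By the same compactness, lengths of minimizing $c$-geodesics joining $A_0$ to $A_1$ are uniformly bounded for $c$ near $1$. Since $\tau_t^{k,n}(\ell)\to t$ as $k\to0$ uniformly for $\ell$ in bounded sets (for $k<0$, $\sinh(t\ell a)/\sinh(\ell a)\to t$ as $a\to0$), the infimum $\tau_t^{k_c,n}(A_0,A_1)$ converges to $t$.
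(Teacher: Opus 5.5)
\textbf{Verdict: there is a genuine gap.} It sits exactly where the lemma is hard, and the rest of your plan is sound.

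\emph{What works.} Your treatment of the regime where $r$ stays away from $0$ is correct. It even spells out what the paper only asserts: $\Psi_{c,r}\to\Psi_{1,r}$ locally uniformly on $(-2\pi,0)$, and $\Psi_{c,r}\ge\theta/(1-c)\to+\infty$ on $[0,2\pi)$. The symmetry reduction is fine. So is the step from a uniform length bound to $\tau_t^{k_c,n}(A_0,A_1)\to t$.

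\emph{The gap.} It is the degenerate regime $r\to0$, $z_1\to z_\infty>-\pi$. The paper isolates this as Lemma~\ref{lem:turning-angle-compactness}, and both the midpoint convergence and the length bound depend on it. You say it remains to rule out $\theta_c\to-2\pi$, by comparing with the segment and using that large circles have action close to $2\pi$. That only excludes nearly full circles of large radius.
\begin{itemize}
\item A nearly full circle of radius $R$ encloses area $a\approx\pi R^2$. Its action is close to $\sqrt{4\pi a+(z_1+a)^2}-c(z_1+a)$, which tends to the segment action $|z_1|-z_1$ as $R\to0$.
\item So comparison with the segment cannot exclude $\theta_c\to-2\pi$ with $R\to0$.
\item For bounded $R$ you would need the strict monotonicity in $a$ from Lemma~\ref{lem:heisenberg-minimizers}\ref{item:heis-r-zero-critical}, not ``close to $2\pi$''.
\end{itemize}
Your fallback does not close the gap either. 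The coercivity claim ``bounded action and compact endpoints imply bounded length'' fails uniformly as $c\nearrow1$: at $c=1$, escaping circles have action tending to $2\pi$, which is exactly the phenomenon at $\cW$. You defer this to an unspecified lower bound on $\Psi_{c,r}$. Two smaller points: the margin does not fail for large positive $z_1$, since the segment action is at most $r+(1-c)z_1$. And what must be small is the arc length $r|\theta|/(2|\sin(\theta/2)|)$, not $|R|$, which can be large when $\theta_c\to0$.

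\emph{Two ways to close it.}

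\textbf{The paper's route.} For $\theta<0$, with $\eps=1-c^2$ and $R=r/(2\sin(\theta/2))$, write
\[
\Psi_{c,r}(\theta)=\tfrac12R^2(\theta-\sin\theta)+\theta\,\frac{1-\sqrt{1-\eps}\,\sqrt{1+\eps R^2}}{\eps}.
\]
If $\theta_c\to-2\pi$, then $\eps R^2\to0$, since otherwise $z_1\to-\infty$. Expanding then gives $z_1\le\theta_c/2+o(1)\to-\pi$, a contradiction. On the other side, $\theta_c>0$ forces $\theta_c\le(1-c)z_1$.

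\textbf{A corrected version of your comparison, which does not control $\theta$ at all.}
\begin{itemize}
\item The minimizer's action is $\sqrt{l^2+\zeta^2}-c\zeta$, with $\zeta=z_1+a$, where $l$ is the horizontal length and $a$ the signed area.
\item The isoperimetric inequality for $\Pi\gamma$ closed up by the chord gives $(l+r)^2\ge4\pi|a|$.
\item For $z_1>-\pi$, the map $a\mapsto\sqrt{4\pi|a|+(z_1+a)^2}-(z_1+a)$ has a strict minimum $|z_1|-z_1$ at $a=0$ and tends to $2\pi>|z_1|-z_1$ as $a\to+\infty$.
\item Together with the upper bound $\sqrt{r^2+z_1^2}-cz_1$ and a compactness argument, this forces $l\to0$.
\item That directly gives $\Pi\gamma(\lambda)\to0$, $|\rz(\lambda)-\lambda z_1|\le2l^2\to0$, and a bounded length $\sqrt{l^2+\zeta^2}$.
\end{itemize}
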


	\begin{lemma}\label{lem:turning-angle-compactness}
		Let $c_j \in (0,1),r_j >0$ and $z_j \in \RR$, $j \ge 1$ be sequences satisfying
		\[
			c_j\nearrow1, \qquad 0 < r_j\to0
			\qquad
			\text{and}
			\qquad
			z_j\to z_\infty>-\pi.
		\]
		Then the sequence $\theta_j:=\Psi_{c_j,r_j}^{-1}(z_j)$ stays in a compact subset of $(-2\pi,2\pi)$.
	\end{lemma}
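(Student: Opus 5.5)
The plan is to argue by contradiction and extract a convergent subsequence. Suppose, after passing to a subsequence, that $\theta_j\to\theta_\infty\in\{-2\pi,2\pi\}$; we will show that $\Psi_{c_j,r_j}(\theta_j)$ cannot remain bounded in that case, contradicting $\Psi_{c_j,r_j}(\theta_j)=z_j\to z_\infty$. The only information we use is the explicit formula for $\Psi_{c,r}$ and the standing assumptions $r_j\to0$, $c_j\nearrow1$, $z_\infty>-\pi$.

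\emph{Case $\theta_j\to2\pi$.} Here $\sin(\theta_j/2)\to0^+$ and $\theta_j>0$, so every term of $\Psi_{c_j,r_j}(\theta_j)$ is positive. The second summand is at least $\theta_j(1+c_j)/(1-c_j^2)=\theta_j/(1-c_j)$, which tends to $+\infty$ as $c_j\nearrow1$. Hence $\Psi_{c_j,r_j}(\theta_j)\to+\infty$, contradicting the convergence of $z_j$.

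\emph{Case $\theta_j\to-2\pi$.} This is the main obstacle, because the two pieces of the second summand nearly cancel. Write $s_j:=\sin^2(\theta_j/2)\to0$, $\delta_j:=1-c_j^2\to0$ and $\theta_j=-|\theta_j|$. The second summand then equals
\[
\frac{|\theta_j|\left(c_j\sqrt{1+\delta_jr_j^2/(4s_j)}-1\right)}{\delta_j}.
\]
We would expand $c_j=\sqrt{1-\delta_j}$ and bound $\sqrt{1+x}\le 1+x/2$. The second summand is then at most
\[
\frac{|\theta_j|\left(\sqrt{1-\delta_j}-1\right)}{\delta_j}+\frac{|\theta_j|\,c_jr_j^2}{8s_j},
\]
whose first term tends to $-\pi$. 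The first summand of $\Psi$ is $(\theta_j-\sin\theta_j)r_j^2/(8s_j)$, and its numerator is close to $-2\pi$. Adding the two, the $r_j^2/s_j$ contributions combine to
\[
\frac{r_j^2}{8s_j}\left(\theta_j-\sin\theta_j+c_j|\theta_j|\right)=\frac{r_j^2}{8s_j}\left(-(1-c_j)|\theta_j|-\sin\theta_j\right).
\]
Since $\sin\theta_j$ is small and has sign opposite to $\theta_j+2\pi$, this bracket is at most $o(1)\cdot s_j^{1/2}$, which is nonpositive up to a small error. Indeed $\sin\theta_j\approx(\theta_j+2\pi)>0$ for $\theta_j$ slightly above $-2\pi$, so $-\sin\theta_j<0$.

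It follows that $\limsup\Psi_{c_j,r_j}(\theta_j)\le-\pi$. This is consistent with $\Psi_{1,r}\to-\infty$ and with the critical threshold $-\pi$ from Lemma~\ref{lem:heisenberg-minimizers}\ref{item:heis-r-zero-critical}. Since $z_j\to z_\infty>-\pi$, we have a contradiction. The delicate point is to check carefully that the error terms, namely the $\sqrt{1+x}$ bound and the replacement of $c_j$ by $1$, do not spoil the sign. Where $\delta_jr_j^2/s_j$ is not small we would instead keep the exact square root and use monotonicity in $r$.

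Having ruled out both endpoints, every subsequential limit of $(\theta_j)$ lies in $(-2\pi,2\pi)$, and the sequence is bounded in $[-2\pi,2\pi]$. Therefore $(\theta_j)$ stays in a compact subset of $(-2\pi,2\pi)$, which is the claim of Lemma~\ref{lem:turning-angle-compactness}.
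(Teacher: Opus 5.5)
Your argument is correct. In the hard case $\theta_j\to-2\pi$ it takes a different and more elementary route than the paper; the case $\theta_j\to2\pi$ is handled exactly as in the paper.

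\textbf{The paper's route.} For $\theta_j\to-2\pi$, with $R_j=r_j/(2\sin(\theta_j/2))$, the paper first shows that $(1-c_j^2)R_j^2\to0$, since otherwise $z_j\to-\infty$. It then Taylor-expands the square root to obtain $z_j=\tfrac{\theta_j}{2}-\tfrac12R_j^2\sin\theta_j+\tfrac{\theta_j(1-c_j^2)(1+R_j^2)^2}{8}(1+o(1))$, whose last two terms are eventually nonpositive.

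\textbf{Your route.} You use the concavity bound $\sqrt{1+x}\le1+x/2$. This holds for every $x\ge-1$ and enters with a positive coefficient, so it gives an exact, non-asymptotic estimate:
\[
\Psi_{c,r}(\theta)\le-\frac{|\theta|}{1+c}-\frac{r^2}{8\sin^2(\theta/2)}\bigl((1-c)|\theta|+\sin\theta\bigr)\le-\frac{|\theta|}{1+c},
\qquad \theta\in(-2\pi,-\pi],
\]
uniformly in $r>0$ and $c\in[0,1)$. This removes the need for the preliminary smallness claim and for any control of Taylor remainders. What the paper's expansion buys in exchange is a sharper description of $z_j$, showing $\Psi_{c,r}\approx\Psi_{1,r}$ in this regime.

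\textbf{Two corrections to the write-up.}
\begin{enumerate}
\item The hedging at the end is unnecessary and should go. There are no error terms, and you never actually replace $c_j$ by $1$. No separate treatment is needed when $\delta_jr_j^2/s_j$ is large.
\item The sentence claiming $\sin\theta_j$ has sign opposite to $\theta_j+2\pi$ is wrong. It has the same sign: $\sin\theta_j>0$ on $(-2\pi,-\pi)$. Hence the bracket $-(1-c_j)|\theta_j|-\sin\theta_j$ is strictly negative, not merely ``at most $o(1)\cdot s_j^{1/2}$''. That weaker bound would not by itself control the product with $r_j^2/(8s_j)$, since $s_j\to0$. The exact sign is what makes your argument work, so state it that way.
\end{enumerate}
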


	\begin{proof}
		Passing to a convergent subsequence,
		we may assume that $\theta_j\to\theta\in[-2\pi,2\pi]$ and prove that $\theta\ne\pm2\pi$.
		Assume the contrary. Then after discarding finitely many terms,
		we may assume that $\theta_j\ne0$ for all $j$. Set
		\[
			\eps_j := 1-c_j^2
			\qquad\text{and}\qquad
			R_j:=\frac{r_j}{2\sin(\theta_j/2)}.
		\]
		Then
		\begin{equation}\label{eq:turning-angle-decomposition}
			z_j
			=
			\Psi_{c_j,r_j}(\theta_j)
			=
			\frac12 R_j^2(\theta_j - \sin\theta_j)
			+
			\frac{\theta_j + c_j|\theta_j|\sqrt{1 + \eps_j R_j^2}}{\eps_j}.
		\end{equation}

		If $\theta=2\pi$, then
		\[
			\frac{\theta_j\bigl(1+c_j\sqrt{1+\eps_j R_j^2}\bigr)}{\eps_j}
			\;\ge\;
			\frac{\theta_j}{\eps_j}
			\;\to\;+\infty
		\]
		while $\theta_j-\sin\theta_j\ge0$.
		Hence~\eqref{eq:turning-angle-decomposition} gives $z_j \to +\infty$, a contradiction.

		Assume next that $\theta=-2\pi$. Note that
		\begin{equation}\label{eq:turning-angle-small-parameter}
			\eps_j R_j^2 \to 0.
		\end{equation}

		Indeed, otherwise~\eqref{eq:turning-angle-decomposition} implies that the sequence $(z_j)$ has a subsequence tending to $-\infty$,
		a contradiction. 
		By~\eqref{eq:turning-angle-decomposition} and~\eqref{eq:turning-angle-small-parameter},
		\begin{align*}
			z_j
			&=
			\frac12 R_j^2(\theta_j-\sin\theta_j)
			+
			\theta_j\cdot
			\frac{1-\sqrt{1-\eps_j}\cdot\sqrt{1+\eps_j R_j^2}}{\eps_j}
			\\
			& =
			\frac{\theta_j}{2}
			-
			\frac12 R_j^2 \sin\theta_j
			+
			\frac{\theta_j \eps_j(1+R_j^2)^2}{8}(1+o(1)).
		\end{align*}
		Since $\theta_j\to-2\pi^+$, the last two terms are nonpositive for large $j$,
		whence $z_\infty = \lim_jz_j \le -\pi$, a contradiction.
	\end{proof}

	\begin{proof}[Proof of Lemma~\ref{lem:midpoint-convergence-away-from-critical-ray}]
		Recall that $\tau_t^{k,n}(A_0,A_1)$ is the infimum of
		$\tau_t^{k,n}$ over all lengths of minimizing
		magnetic geodesics joining $A_0$ to $A_1$, and that
		\[
			\tau_t^{k_c,n}(\ell)
			=
			t
			\left(
			\frac{\sinh(t\cdot h_c(\ell))}
			{t\sinh (h_c(\ell))}
			\right)^{\tfrac{n-1}{n}}
			\quad\text{where}\quad
			h_c(\ell)=\ell\sqrt{\frac{-k_c}{n-1}} = O(\ell\cdot(1-c)).
		\]

		Hence, by symmetry (i.e. by left-invariance and Lemma~\ref{lem:heisenberg-unitary-symmetries})
		it is enough to consider the minimizing
		$c$-magnetic geodesic $\gamma^c_{r,z}$ joining $0$ to $(r e_1,0,z)$ and to show
		that
		\[
			\gamma^c_{r,z}(\lambda)\to\gamma^1_{r,z}(\lambda) 
			\qquad\text{and}\qquad
			\Len(\gamma^c_{r,z}) = O(1) 
			\qquad\text{as }c\nearrow1,
		\]
		uniformly over $(r,z)$ in a given compact set
		\[
			K \subseteq \Big([0,\infty)\times\RR\Big)\setminus\Big(\{0\}\times(-\infty,-\pi]\Big).
		\]

		Fix such $K$. Since $K$ is compact,
		it suffices to consider an arbitrary sequence $(c_j,r_j,z_j) \in [0,1]\times K$ with $c_j\nearrow1$ and $(r_j,z_j) \to (r_\infty,z_\infty) \in K$,
		and, passing to a subsequence if we wish, prove that the sequence
		\[
			\gamma_j:=\gamma^{c_j}_{r_j,z_j}
		\]
		satisfies
		\begin{equation}\label{eq:gammajconvergence}
			\gamma_j(\lambda)
			\to
			\gamma^1_{r_\infty,z_\infty}(\lambda),
			\qquad\text{and}\qquad
			\Len(\gamma_j) = O(1).
		\end{equation}

		If $r_j=0$ along a subsequence then $r_\infty=0$,
		so since $(r_\infty,z_\infty)\in K$ we have $z_\infty>-\pi$,
		and Lemma~\ref{lem:heisenberg-minimizers} shows that along that subsequence $\gamma_j$ is the vertical segment from $0$ to $(0,0,z_j)$ for all $j$ sufficiently large. This immediately implies~\eqref{eq:gammajconvergence}. Thus we may assume that $r_j>0$ for all $j$.

		Let $\theta_j:=\Psi_{c_j,r_j}^{-1}(z_j)$.
		By~\eqref{eq:normalized-minimizer-nonzero-xy},~\eqref{eq:normalized-minimizer-nonzero-z},
		and~\eqref{eq:normalized-minimizer-line}, 
		\[
		\begin{split}
			\gamma_j(\lambda)
			&=
			\bigg(
				\frac{r_j\left(\sin(\theta_j(\lambda-\tfrac12))+\sin(\theta_j/2)\right)}{2\sin(\theta_j/2)}e_1,
				\frac{r_j\left(\cos(\theta_j(\lambda-\tfrac12))-\cos(\theta_j/2)\right)}{2\sin(\theta_j/2)}e_1,
				\\
				&\qquad
				\lambda z_j+
				\frac{r_j^2}{4\sin^2(\theta_j/2)}
				\left[
					\tfrac14\left(\sin(2\theta_j(\lambda-\tfrac12))+(2\lambda-1)\sin\theta_j\right)
					-\cos(\theta_j/2)\sin(\theta_j(\lambda-\tfrac12))
				\right]
			\bigg),
		\end{split}
		\]
		and by~\eqref{eq:length-minimizer},
		\[
			\Len(\gamma_j)
			=
			\sqrt{
				\frac{r_j^2\theta_j^2}{4\sin^2(\theta_j/2)}
				+
				\left(
					z_j-\frac{r_j^2(\theta_j-\sin\theta_j)}{8\sin^2(\theta_j/2)}
				\right)^2
			},
		\]
		with the case $\theta_j=0$ understood by continuity.
		If $r_\infty>0$ then $\theta_j\to\theta_\infty:=\Psi_{1,r_\infty}^{-1}(z_\infty)\in(-2\pi,0)$ whence $\gamma_j(\lambda) \to \gamma^1_{r_\infty,z_\infty}(\lambda)$, and if $r_\infty=0$ then Lemma~\ref{lem:turning-angle-compactness} shows that the sequence $\theta_j$ stays in a compact subset of $(-2\pi,2\pi)$ and then $\gamma_j(\lambda)\to(0,0,\lambda z_\infty)$. In both cases $\Len(\gamma_j)=O(1)$, since $\theta_j$ is bounded away from $\pm2\pi$.
		This finishes the proof of~\eqref{eq:gammajconvergence}.
	\end{proof}

\subsection{Proof of Theorem~\ref{thm:critical-heisenberg-bm}}
	Recall that Theorem~\ref{thm:critical-heisenberg-bm} asserts that for every pair of Borel sets $A_0,A_1\subseteq\Heis^m$ of positive volume
	and every $0\le\lambda\le1$, the contact magnetic Minkowski $\lambda$-average
	$A_\lambda$, defined to be the set of $\lambda$-midpoints of minimizing $1$-magnetic geodesics joining $A_0$ to $A_1$,
	satisfies
	\[
	\Vol(A_\lambda)^{1/n}
	\ge
	(1-\lambda)\cdot\Vol(A_0)^{1/n}
	+
	\lambda\cdot\Vol(A_1)^{1/n}.
	\]
	The cases $\lambda=0$ and $\lambda=1$ are trivial; in what follows we assume that $0<\lambda<1$.

	Before finishing the proof of the theorem we need one last lemma.

	\begin{lemma}\label{lem:P-infinite-midpoints}
	Suppose that there exist $(p_0,p_1)\in\cW$ such that each $p_i$ is a Lebesgue density point of the set $A_i$.
	Then $\Vol(A_\lambda) = \infty$.
	\end{lemma}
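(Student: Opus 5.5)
The idea is to show that near a pair in $\cW$ the midpoint map $\mm^1_\lambda$ behaves like an inversion in the horizontal variables, pushing midpoints off to infinity. Tiny shells of endpoint pairs inside $A_0\times A_1$ are then mapped onto regions of horizontal size $\sim 1/r$, so their volumes sum to infinity. By left-invariance and Lemma~\ref{lem:heisenberg-unitary-symmetries} it suffices to understand minimizing $1$-magnetic geodesics from $0$ to $(re_1,0,z)$. These are given explicitly by Lemma~\ref{lem:heisenberg-minimizers}\ref{item:heis-r-positive}, with $\theta=\Psi_{1,r}^{-1}(z)\in(-2\pi,0)$ and $R=r/(2\sin(\theta/2))$.

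\emph{Step 1 (asymptotics).} Fix $z_*<-\pi$ and let $r\to0$. Writing $\theta/2=-\pi+\delta$, the equation $\Psi_{1,r}(\theta)=z$ reads
\[
z=-\pi+\delta-\tfrac{r^2}{4}\cot\delta .
\]
Hence $\delta\approx r^2/(4(-\pi-z))$ and $|R|\approx 2(-\pi-z)/r\to\infty$. From~\eqref{eq:normalized-minimizer-nonzero-xy}, the horizontal projection of the $\lambda$-midpoint is at distance
\[
\rho\approx 2|R|\,|\sin(\pi\lambda)|\approx \frac{4(-\pi-z)\sin(\pi\lambda)}{r}
\]
from the origin, which is of order $1/r$. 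Its vertical coordinate $\zeta(r,z)$ is read off from~\eqref{eq:normalized-minimizer-nonzero-z}. I would compute the $2\times 2$ Jacobian of $(r,z)\mapsto(\rho,\zeta)$ from these expansions and check that it is nondegenerate, of size $\gtrsim r^{-2}$, uniformly for $z$ in a small interval around $z_*$ and all small $r$. This is the step I expect to be the main obstacle: the leading term of $\zeta$ must not be a function of $\rho$ alone.

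Given this, consider the full parametrization of $q_1$ by $(r,\text{direction in }S^{2m-1},z)$. The midpoint map is equivariant under the unitary symmetries, so it maps directions to directions diffeomorphically. Its Jacobian, in volume terms, therefore compares
\[
r^{2m-1}\,dr\,d\sigma\,dz
\qquad\text{with}\qquad
\rho^{2m-1}\,d\rho\,d\sigma\,d\zeta\sim r^{-2m-1}\,dr\,d\sigma\,dz .
\]
The images of the dyadic shells $S_k=\{r\in(2^{-k-1},2^{-k}),\ z\in I\}$ are thus essentially disjoint (their $\rho$ lie in disjoint dyadic ranges), and each has volume $\gtrsim |S_k|\,2^{2k(m+1)}$.

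\emph{Step 2 (reducing to a fixed $q_0$ and $z_*<-\pi$).} The case $z(p_0^{-1}p_1)=-\pi$ needs care, since then $R$ stays bounded on natural shells. I would first use a Steinhaus-type argument instead. Because $p_0,p_1$ are density points, for small $\delta$ the set $A_0\cap B_\delta(p_0)$ has almost full measure in the ball, and likewise for $A_1$. By Fubini there is $q_0\in A_0\cap B_\delta(p_0)$ such that $q_0^{-1}(A_1\cap B_\delta(p_1))$ has density close to $1$ in a neighborhood of $q_0^{-1}p_1$. Since $z(q_0^{-1}p_1)$ is within $O(\delta)$ of $z(p_0^{-1}p_1)\le-\pi$, I would choose a point $w=(0,0,z_*)$ with $z_*<-\pi$ lying well inside this neighborhood. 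I would then choose $q_0$ so that $w$ is itself a density point of $q_0^{-1}A_1$; this is possible by Lebesgue's theorem applied to almost every $q_0$, using that the neighborhood is open and has near-full density.

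\emph{Step 3 (conclusion).} The region $\{\,|\Pi q|\in(r',2r'),\ |z(q)-z_*|<r'\,\}$ occupies a fixed positive fraction of the ball of radius $\sim r'$ about $w$. By density, $q_0^{-1}A_1$ fills at least half of it for all small $r'=2^{-k}$, so its measure is $\gtrsim 2^{-k(2m+1)}$. By Step 1 the midpoints $q_0\,\mm^1_\lambda(0,q_0^{-1}q_1)$, for $q_1$ in this part of $A_1$, form subsets of $A_\lambda$ that are pairwise disjoint for different $k$. Their volumes are bounded below by
\[
2^{-k(2m+1)}\cdot 2^{2k(m+1)}\,r'=2^{-k(2m+1)}\cdot 2^{2k(m+1)}\cdot 2^{-k}=2^{0}\cdot c ,
\]
a positive constant independent of $k$. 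Here the extra factor $r'$ accounts for the $z$-interval of length $r'$ being mapped with Jacobian $\sim1$ in $z$. Summing over $k$ gives $\Vol(A_\lambda)=\infty$. If the Jacobian bound of Step 1 turns out weaker, I would instead fix a $z$-interval of length $\sim r'$ and check that the lower bound per shell still does not decay.
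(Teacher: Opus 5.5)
\textbf{There is a genuine gap in Step~1.} That step carries the whole argument, and you leave it unverified. As you set it up, it does not go through, for three reasons.

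\emph{The coordinates are not adapted to the symmetries.} In $(r,\omega,z)$ the unitary symmetries do not act only on $\omega$: by Lemma~\ref{lem:heisenberg-unitary-symmetries}, $\rF_\rU$ shifts $z$ by $f_\rU(v)$. So equivariance does not reduce the volume Jacobian to $\det\partial(\rho,\zeta)/\partial(r,z)$ with $\zeta$ the raw vertical coordinate of the midpoint. The invariant coordinate is $\tilde z:=z-\tfrac12\la y,x\ra_{\RR^m}$. For $p=(re_1,0,z)$, write $\Pi\mm^1_\lambda(0,p)=(Xe_1,Ye_1)$; then the midpoint has $\tilde z=\zeta-\tfrac12XY$, and the correction is of the same size as $\zeta$ itself.

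\emph{The obstacle you anticipate actually occurs for your determinant.} For $\lambda\neq\tfrac12$, as $\theta\to-2\pi$ both $\rho$ and $\zeta$ depend to leading order only on $|R|$. Hence $\zeta\approx c_\lambda\rho^2$ and the $r^{-4}$ terms cancel. The surviving leading term is $\sin(\lambda\pi)\bigl(\lambda+2(1-\lambda)\sin^2(\lambda\pi)\cos(2\pi\lambda)\bigr)/|\sin(\theta/2)|$. The bracket changes sign on $(0,\tfrac12)$ (it equals $-\tfrac12$ at $\lambda=\tfrac12$), so the claimed uniform bound $\gtrsim r^{-2}$ fails for some $\lambda$.

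\emph{Multiplicity is not controlled.} A Jacobian lower bound bounds the image volume from below only if you also control the multiplicity of the map on each shell, and you do not address this.

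\textbf{How to repair it within your approach.} Use the coordinates $(|\Pi q|,\tilde z)$. The sub-arc from $0$ to the $\lambda$-midpoint has the same radius $|R|$ and central angle $\lambda\theta$. Set $\Phi(s,\Theta):=\bigl(2s|\sin\tfrac\Theta2|,\ \tfrac\Theta2-\tfrac{s^2}{2}\sin\Theta\bigr)$. It maps $(0,\infty)\times(-2\pi,0)$ bijectively onto $(0,\infty)\times\RR$, with $\det D\Phi=|\sin\tfrac\Theta2|(1+s^2)$. The endpoint and the midpoint are then $\Phi(|R|,\theta)$ and $\Phi(|R|,\lambda\theta)$. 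Consequently:
\begin{itemize}
\item the reduced midpoint map is injective, with Jacobian $\lambda|\sin(\lambda\theta/2)|/|\sin(\theta/2)|\sim r^{-2}$;
\item the volume Jacobian is $\sim r^{-4m}$;
\item a single shell already produces volume $\gtrsim r'^{1-2m}$.
\end{itemize}
Your Step~3 bookkeeping, with the extra factor $r'$ and the exponent $2(m+1)$, is off, though in the harmless direction.

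\textbf{How the paper does it.} The paper sidesteps all of this by fixing the displacement $p=q_0^{-1}q_1$ instead of the base point $q_0$. After normalizing $p_0=0$ and $p_1=(0,0,z_1)$ with $z_1<-\pi$, for $p$ near $p_1$:
\begin{itemize}
\item the set $Q_p=B_\delta(0)\cap A_0\cap A_1p^{-1}$ has volume $\ge a$ by density;
\item $Q_p\,\mm^1_\lambda(0,p)\subseteq A_\lambda$ by left-invariance;
\item this right translate of $Q_p$ has volume $\ge a$ by unimodularity.
\end{itemize}
Along $p_j=(r_je_1,0,z_1)$ with $r_j\to0$, the point $\Pi\mm^1_\lambda(0,p_j)$ escapes to infinity, so these translates are eventually disjoint. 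No Jacobian or injectivity argument is needed. Your Step~2 is essentially the paper's first reduction.
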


	\begin{proof}
	Recall that $(p_0,p_1) \in \cW$ means that $p_0^{-1}p_1 \in Z$ and $z(p_0^{-1}p_1) \le -\pi$.
	If $z(p_0^{-1}p_1)=-\pi$, then density and Fubini's theorem give
	another pair $(q_0,q_1)\in A_0\times A_1$ such that each $q_i$ is a Lebesgue
	density point of $A_i$, and such that $q_0^{-1}q_1\in Z$ and
	$z(q_0^{-1}q_1)<-\pi$. Thus we may assume that
	$z(p_0^{-1}p_1)<-\pi$. Furthermore, by symmetry, we may assume that $p_0=0$ and
	$p_1=(0,0,z_1)$ with $z_1<-\pi$. 

	By density and continuity of right translation, there exist $\delta>0$, a neighborhood
	$U$ of $p_1$, and a constant $a>0$ such that for every $p\in U$,
	\begin{equation}\label{eq:chunk-positive-measure}
	\Vol(Q_p)\ge a,
	\qquad\text{where}\quad
	Q_p:=B_\delta(0)\cap A_0\cap A_1p^{-1}.
	\end{equation}

	For $p\in U\setminus Z$, define
	\[
	C_p:=Q_p\,\mm_\lambda^1(0,p).
	\]

	Since $\Heis^m$ is unimodular, right translations preserve volume so by~\eqref{eq:chunk-positive-measure},
	\[
	\Vol(C_p)\ge a.
	\]

	If $q \in Q_p$ then $q \in A_0$ and $qp \in A_1$,
	and then by left invariance $q\mm_\lambda^1(0,p) = \mm_\lambda^1(q,qp) \in A_\lambda$. Hence
	\[
	C_p\subseteq A_\lambda.
	\]

	We now construct a sequence $p_j \in U\setminus Z$ such that the sets $C_{p_j}$ are pairwise disjoint.
	Let $r_j\searrow 0$ and set $p_j:=(r_je_1,0,z_1)$. Then $p_j\in U$ for $j$ sufficiently large.

	Let $\theta_j:=\Psi_{1,r_j}^{-1}(z_1)\in(-2\pi,0)$. Since 
	\[
	-\pi > z_1 = \frac{\theta_j}{2}-\frac{r_j^2}{4}\cot\frac{\theta_j}{2}
	\]
	and $r_j\searrow 0$, necessarily
	$r_j\cot(\theta_j/2)\to+\infty$ and in particular $\theta_j\to-2\pi$. 
	Hence, by Lemma~\ref{lem:heisenberg-minimizers}\ref{item:heis-r-positive},
	\begin{equation}\label{eq:Pimlambda10pj}
	|\Pi(\mm_\lambda^1(0,p_j))|_{\RR^{2m}}
	=
	\frac{r_j}{\sin(-\theta_j/2)}
	\left|\sin\frac{\lambda\theta_j}{2}\right| \to \infty.
	\end{equation}

	Moreover, since $Q_{p_j}\subseteq B^{\RR^{2m}}_\delta(0)$, 
	\begin{equation}\label{eq:PiCpj}
	\Pi(C_{p_j})
	=
	\Pi(Q_{p_j}\mm_\lambda^1(0,p_j))
	= 
	\Pi(Q_{p_j}) + \Pi\mm_\lambda^1(0,p_j)
	\subseteq
	B^{\RR^{2m}}_\delta(\Pi(\mm_\lambda^1(0,p_j))).
	\end{equation}

	By~\eqref{eq:Pimlambda10pj} and~\eqref{eq:PiCpj}, after passing to a subsequence,
	the sets $C_{p_j}$ are pairwise disjoint.
	Since each $C_{p_j}$ is a subset of $A_\lambda$ of volume $\ge a > 0$, it follows that
	$\Vol(A_\lambda)=\infty$.
	\end{proof}

	\begin{proof}[Proof of Theorem~\ref{thm:critical-heisenberg-bm}]
	We may assume that every point of $A_0$ and $A_1$ is a Lebesgue density point. Indeed,
	removing a null set from $A_0$ and $A_1$ does not change their volumes and can only decrease the volume of $A_\lambda$.

	If $(A_0\times A_1)\cap\cW\ne\varnothing$, then
	Lemma~\ref{lem:P-infinite-midpoints} gives
	$\Vol(A_\lambda)=\infty$, and there is nothing to prove. Thus we may assume that
	\[
	(A_0\times A_1)\cap\cW=\varnothing.
	\]

	Let $K_i\subseteq A_i$ be compact sets of positive volume. For
	$0<c\le1$, denote by $K_{\lambda}^{[c]}$ the set of $\lambda$-midpoints of minimizing $c$-magnetic geodesics joining $K_0$ to $K_1$.
	By Theorem~\ref{thm:distorted-bm} and Lemma~\ref{lem:heisenberg-conditions}, for all $c\in(0,1)$,
	\begin{equation}\label{eq:critical-bm-compact}
	\Vol(K_{\lambda}^{[c]})^{1/n}
	\ge
	\tau_{1-\lambda}^{k_c,n}(K_0,K_1)\Vol(K_0)^{1/n}
	+
	\tau_{\lambda}^{k_c,n}(K_0,K_1)\Vol(K_1)^{1/n},
	\end{equation}
	where $k_c:=\min_{S\Heis^m}\Ric_{\Omega_c}$. Since
	$K_0\times K_1$ is disjoint from $\cW$,
	Lemma~\ref{lem:midpoint-convergence-away-from-critical-ray} gives
	\begin{equation}\label{eq:critical-distortion-limit}
	\tau_{1-\lambda}^{k_c,n}(K_0,K_1)\to1-\lambda
	\qquad\text{and}\qquad
	\tau_{\lambda}^{k_c,n}(K_0,K_1)\to\lambda,
	\end{equation}
	as well as uniform convergence of $\mm_\lambda^c$ to $\mm_\lambda^1$ on
	$K_0\times K_1$, and therefore Hausdorff convergence of $K_{\lambda}^{[c]}$ to $K_{\lambda}^{[1]}$.
	By Hausdorff upper semicontinuity of volume on compact sets,
	it follows from~\eqref{eq:critical-bm-compact} and~\eqref{eq:critical-distortion-limit}
	that
	\[
	\Vol(K_{\lambda}^{[1]})^{1/n}
	\ge
	(1-\lambda)\Vol(K_0)^{1/n}
	+
	\lambda\Vol(K_1)^{1/n}.
	\]

	Since $K_i \subseteq A_i$, we have $K_{\lambda}^{[1]}\subseteq A_\lambda$, whence
	\[
	\Vol(A_{\lambda})^{1/n}
	\ge
	(1-\lambda)\Vol(K_0)^{1/n}
	+
	\lambda\Vol(K_1)^{1/n}.
	\]

	But $K_i$ are arbitrary compact subsets of $A_i$,
	so inner regularity of volume gives~\eqref{eq:critical-heisenberg-bm}.
	\end{proof}

	The following lemma shows that Theorem~\ref{thm:critical-heisenberg-bm} is sharp:
	equality is approached by pairs of 
	cylinders which differ by a vertical translation and
	whose radius is small compared to their height.

	\begin{lemma}\label{lem:heisenberg-cylinder-horizontal-midpoints}
	Let $0<\varrho\le1$ and $t,b>0$ satisfy
	\begin{equation}\label{eq:tbrho}
		t\ge b + 3\varrho^2,
	\end{equation}
	and define
	\[
	A_0:=B_\varrho\times[0,b]
	\qquad\text{and}\qquad
	A_1:=B_\varrho\times[t,t+b]
	\]
	where $B_\varrho:=B^{\RR^{2m}}_\varrho(0)$.
	Then the contact magnetic Minkowski $(1/2)$-average of $A_0$ and $A_1$ satisfies
	\begin{equation}\label{eq:cylinder-volume-defect}
	\Vol(A_{1/2})^{1/n}
	\le
	\left(1+O(\varrho^2/b)\right)
	\frac{\Vol(A_0)^{1/n}+\Vol(A_1)^{1/n}}2,
	\end{equation}
	where $n:=2m+1$.
	\end{lemma}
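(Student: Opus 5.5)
The plan is to show directly that $A_{1/2}$ lies inside an explicit cylinder $B_\varrho\times[\tfrac t2-C\varrho^2,\tfrac t2+b+C\varrho^2]$ with $C$ a dimensional constant. Since $g=g_{\RR^{2m}}+\eta^2$ has Riemannian volume equal to Lebesgue measure in the coordinates $(x,y,z)$, we have $\Vol(A_0)=\Vol(A_1)=\Vol_{\RR^{2m}}(B_\varrho)\,b$. Granting the cylinder inclusion, we get $\Vol(A_{1/2})\le \Vol_{\RR^{2m}}(B_\varrho)(b+2C\varrho^2)$. Taking $n$-th roots gives~\eqref{eq:cylinder-volume-defect}, because $(1+2C\varrho^2/b)^{1/n}=1+O(\varrho^2/b)$.

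\textbf{Step 1: reduction to the normal form.} Fix $p_0=(v_0,z_0)\in A_0$ and $p_1=(v_1,z_1)\in A_1$. By the group law, $p:=p_0^{-1}p_1=(v_1-v_0,\;z_1-z_0-\la y(v_0),x(v_1-v_0)\ra)$. Its vertical part satisfies $z(p)\ge t-b-|v_0||v_1-v_0|\ge t-b-2\varrho^2\ge\varrho^2>0$ by~\eqref{eq:tbrho}; it is also bounded above by $t+b+2\varrho^2$. In particular $(p_0,p_1)\notin\cW$, so by Corollary~\ref{cor:heisenberg-unique-minimizers} the minimizer is unique. By left-invariance, its midpoint is $p_0\,\mm^1_{1/2}(0,p)$. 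Applying the isometry $\rF_{\rU}$ of Lemma~\ref{lem:heisenberg-unitary-symmetries}, we reduce to $p=(re_1,0,z)$ with $r=|v_1-v_0|\le2\varrho$. Here $z$ differs from $z(p)$ by $f_{\rU}$, which vanishes at horizontal vectors of this type up to $O(\varrho^2)$; in any case $z>0$ still holds, since the margin $3\varrho^2$ in~\eqref{eq:tbrho} is there precisely to absorb these $O(\varrho^2)$ terms.

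\textbf{Step 2: the arc is minor.} When $r>0$, Lemma~\ref{lem:heisenberg-minimizers}\ref{item:heis-r-positive} with $c=1$ gives $\theta\in(-2\pi,0)$ with $z=\tfrac\theta2-\tfrac{r^2}{4}\cot\tfrac\theta2$. Since $z>0$ and $\theta/2<0$, we need $\cot(\theta/2)<0$, i.e.\ $\theta\in(-\pi,0)$. So $\Pi\gamma$ is an arc of central angle less than $\pi$, and such an arc lies in the closed disc having its chord as diameter. Undoing the rotation and translating back, the horizontal projection of the midpoint lies in the disc with diameter $[v_0,v_1]$, which is contained in $B_\varrho$ by convexity. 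When $r=0$, the minimizer is vertical by Lemma~\ref{lem:heisenberg-minimizers}\ref{item:heis-r-zero-critical}, and the conclusion is trivial.

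\textbf{Step 3: vertical coordinate.} From~\eqref{eq:normalized-minimizer-nonzero-z} at $t=\tfrac12$, we have $\rz(\tfrac12)=\tfrac z2+R^2[\tfrac14\cdot 0\cdot\ldots-0]$ plus the bracket term, which equals $\tfrac z2+R^2\,O(|\theta|^3)$. Since $R=r/(2\sin(\theta/2))$ with $|\theta|<\pi$, we have $R^2|\theta|^3=O(r^2)$ uniformly, so $\rz(\tfrac12)=\tfrac z2+O(\varrho^2)$. Left-multiplying by $p_0$ adds $z_0+\la y(v_0),x(\cdot)\ra=z_0+O(\varrho^2)$. Together with the $O(\varrho^2)$ discrepancies from Step 1, the midpoint's $z$-coordinate is $\tfrac{z_0+z_1}{2}+O(\varrho^2)$, which lies in $[\tfrac t2-C\varrho^2,\tfrac t2+b+C\varrho^2]$.

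The main obstacle is the bookkeeping of Step 3: obtaining a bound $O(\varrho^2)$ on the vertical correction that is uniform as $\theta\to0$, where $R\to\infty$, and as $\theta\to-\pi$. The cancellation in the bracket near $\theta=0$ must be expanded carefully, while boundedness near $-\pi$ follows from $|\sin(\theta/2)|$ being bounded below there. Everything else is explicit, using the formulas of Lemma~\ref{lem:heisenberg-minimizers}.
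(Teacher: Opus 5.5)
There is a genuine gap in Step~2, and the lemma depends on it. An arc of central angle less than $\pi$ does lie in the closed disc having its chord as diameter. But that disc is \emph{not} contained in $B_\varrho$ just because $v_0,v_1\in B_\varrho$; convexity only puts the chord there. Take two nearby points close to $\partial B_\varrho$: the disc on their chord protrudes from $B_\varrho$ by roughly half the chord length, and a nearly semicircular arc bulging outward leaves $B_\varrho$. So knowing $\theta\in(-\pi,0)$ alone does not give $\Pi A_{1/2}\subseteq B_\varrho$. You also cannot afford any fixed enlargement of the base. Replacing $B_\varrho$ by $B_{C\varrho}$ with $C>1$ costs a factor $C^{2m/n}$ in $\Vol(A_{1/2})^{1/n}$, which is not $1+O(\varrho^2/b)$ when $b\gg\varrho^2$.

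The missing ingredient is a lower bound on the radius of the arc, and this is where the margin $3\varrho^2$ in \eqref{eq:tbrho} is really used. You need more than $w>0$ for the normalized height $w=z(p)-\tfrac12\la y(p),x(p)\ra_{\RR^m}$. Mere positivity, which is all your Step~1 records, gives no control on $R$. What is needed is the quantitative bound $w\ge t-b-2\varrho^2\ge\varrho^2$, obtained by a careful Cauchy--Schwarz estimate. Combined with $w\le\tfrac{r^2}{4}\cot(-\theta/2)$ and $r\le2\varrho$, it gives
$R=\tfrac{r}{2\sin(-\theta/2)}\ge\tfrac r2\cot(-\theta/2)\ge\tfrac{2w}{r}\ge\varrho$.
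A minor arc of radius at least $\varrho$ lies in every ball of radius $\varrho$ containing its endpoints, hence in $B_\varrho$. This is the paper's argument.

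Your Step~3 is sound in substance, and it is simpler than you fear. Every term of the bracket in \eqref{eq:normalized-minimizer-nonzero-z} vanishes at $t=\tfrac12$, so $\rz(\tfrac12)=\tfrac w2$ exactly, and there is no delicate cancellation near $\theta=0$. The paper instead bounds the vertical deviation in the original coordinates by $\tfrac\varrho2\Len_{\RR^{2m}}(\Pi\gamma)\le\tfrac\pi2\varrho^2$. That route uses the corrected containment $\Pi\gamma\subseteq B_\varrho$ to get $|y|\le\varrho$ along the curve.
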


	\begin{proof}
	We will show that
	\begin{equation}\label{eq:cylinder-z-defect}
		A_{1/2}
		\subseteq
		B_\varrho\times
		\left[\frac t2-\frac\pi2\varrho^2,\,b+\frac t2+\frac\pi2\varrho^2\right],
	\end{equation}
	which will readily establish~\eqref{eq:cylinder-volume-defect}.
	Let $p_0\in A_0$ and $p_1\in A_1$ and set
	$r:=|\Pi p_1-\Pi p_0|_{\RR^{2m}}$. If $r=0$ then
	\[
		z(p_0^{-1}p_1)=z(p_1)-z(p_0)\ge t-b>0.
	\]

	Thus, by Lemma~\ref{lem:heisenberg-minimizers} and left-invariance,
	the midpoint of the minimizing magnetic geodesic joining $p_0$ to $p_1$ is $\mm_{1/2}^1(p_0,p_1) = (x(p_0),y(p_0),\tfrac12 (z(p_0)+z(p_1)))$ and therefore lies in the right-hand
	side of~\eqref{eq:cylinder-z-defect}. 

	Assume therefore that
	$r>0$. Set $p:=p_0^{-1}p_1$, let $\rU$ be a unitary map of $\RR^{2m}$ such that $\rU(\Pi p)=(re_1,0)$,
	and let $f_{\rU}$ and $\rF_{\rU}$ be as in Lemma~\ref{lem:heisenberg-unitary-symmetries}. Then $\rF_{\rU}(p)=(re_1,0,w)$ where
	\begin{equation}\label{eq:cylinder-w-lower-bound}
	\begin{split}
		w 
		& :=
		z(p)-\frac12\la y(p),x(p)\ra_{\RR^m}
		\\
		& =
		z(p_1)-z(p_0)-\la y(p_0),x(p_1)-x(p_0)\ra_{\RR^m}
		-\frac12\la y(p_1)-y(p_0),x(p_1)-x(p_0)\ra_{\RR^m}
		\\
		& \ge
		t-b-2\varrho^2
		\\
		& \ge
		\varrho^2.
	\end{split}
	\end{equation}
	by the Cauchy-Schwarz inequality, since $p_0,p_1 \in B_\varrho$, and by \eqref{eq:tbrho}.

	Let $\gamma$ be the minimizing $1$-magnetic geodesic joining $p_0$ to $p_1$, parametrized by constant speed on $[0,1]$. Then 
	\[
		\mm_{1/2}^1(p_0,p_1)=\gamma(1/2).
	\]
	
	Since left translations and $\rF_{\rU}$ are isometries preserving $\eta$ (Lemma~\ref{lem:heisenberg-unitary-symmetries}),
	the curve $\rF_{\rU}(p_0^{-1}\gamma)$ is the minimizing $1$-magnetic geodesic joining $\rF_{\rU}(0)=0$ to $\rF_{\rU}(p)=(re_1,0,w)$,
	and we may apply Lemma~\ref{lem:heisenberg-minimizers}\ref{item:heis-r-positive} to it.
	In particular,
	its horizontal projection is a circular arc of central angle $\theta \in (-2\pi,0)$ satisfying
	\begin{equation}\label{eq:w-theta-cylinder}
		w
		=
		\Psi_{1,r}(\theta)
		=
		\frac\theta2-\frac{r^2}{4}\cot\frac\theta2
		\le
		-\frac{r^2}{4}\cot\frac\theta2
	\end{equation}
	and radius
	\[
		R
		=
		\frac{r}{2\sin(-\theta/2)}
		\ge
		\frac{r\cot(-\theta/2)}{2}
		\ge
		\frac{2w}{r}
		\ge
		\varrho,
	\]
	where we used~\eqref{eq:cylinder-w-lower-bound} and $r\le2\varrho$. 
	Since $w > 0$, it follows from~\eqref{eq:w-theta-cylinder} that $\theta\in(-\pi,0)$.
	Since $\Pi\gamma$ differs from $\Pi\rF_{\rU}(p_0^{-1}\gamma)$ by a translation and a unitary transformation,
	it is also a circular arc with central angle $\theta\in(-\pi,0)$ and radius $R\ge\varrho$,
	and with endpoints $\Pi p_0,\Pi p_1\in B_\varrho$,
	and is therefore contained entirely in $B_\varrho$.
	In particular,
	\begin{equation}\label{eq:PiinBr}
		\Pi\mm_{1/2}^1(p_0,p_1)\in B_\varrho.
	\end{equation}

	Since $\eta(\dot\gamma)$ is constant, 
	\[
		z(\gamma(t))
		=
		z(p_0)+\int_0^t\la y,\dot x\ra_{\RR^m}\,ds+\mathrm{const}\cdot\,t,
		\qquad t\in[0,1].
	\]

	Evaluating at $t=\tfrac12$ and $t=1$, we get:
	\[
		z(\mm_{1/2}^1(p_0,p_1))-\frac{z(p_0)+z(p_1)}2
		=
		\frac12\int_0^{1/2}\la y,\dot x\ra_{\RR^m}\,ds
		-\frac12\int_{1/2}^{1}\la y,\dot x\ra_{\RR^m}\,ds.
	\]

	Since $\Pi\gamma\subseteq B_\varrho$ we have $|y|_{\RR^m}\le\varrho$ along $\gamma$,
	while $|\dot x|_{\RR^m}\le|\dot{(\Pi\gamma)}|_{\RR^{2m}}$; hence
	\[
		\left|z(\mm_{1/2}^1(p_0,p_1))-\frac{z(p_0)+z(p_1)}2\right|
		\le
		\frac12\int_0^1|y|_{\RR^m}\,|\dot x|_{\RR^m}\,ds
		\le
		\frac{\varrho}{2}\,\Len_{\RR^{2m}}(\Pi\gamma).
	\]

	Since $\Pi\gamma$ is a circular arc of central angle $\theta\in(-\pi,0)$ and radius $R=\tfrac{r}{2\sin(-\theta/2)}$,
	its Euclidean length is
	\[
		\Len_{\RR^{2m}}(\Pi\gamma)
		=
		R\,(-\theta)
		=
		\frac{-\theta/2}{\sin(-\theta/2)}\,r
		\le
		\frac\pi2\,r
		\le
		\pi\varrho,
	\]
	where we used $r\le2\varrho$ and $-\theta/2\in(0,\pi/2)$. It follows that
	\begin{equation}\label{eq:zininterval}
		\left|z(\mm_{1/2}^1(p_0,p_1))-\frac{z(p_0)+z(p_1)}2\right|
		\le
		\frac\pi2\varrho^2.
	\end{equation}

	Since $p_i \in A_i$ are arbitrary,~\eqref{eq:cylinder-z-defect} follows from~\eqref{eq:PiinBr} and~\eqref{eq:zininterval}.
	\end{proof}

\bibliographystyle{plain}
\bibliography{references}

\end{document}